\newcommand{\R}{\mathbb{R}}
\newcommand{\sig}{\sigma}
\newcommand{{\ba}}{\bf a}
\newcommand{\ve}{\varepsilon}
\newcommand{\la}{\lambda}
\newcommand{\La}{\Lambda}
\newcommand{\ga}{\gamma}
\newcommand{\pa}{\partial}
\newcommand{\ra}{\rightarrow}
\newcommand{\Om}{\Omega}
\newcommand{\del}{\delta}
\newcommand{\al}{\alpha}
\newcommand{\be}{\begin{equation}}
\newcommand{\ee}{\end{equation}}
\newtheorem{lem}{Lemma}{\bf}{\it}
{\it}{\rm}
\newtheorem{rem}{Remark}{\it}{\rm}
{\it}{\rm}
\newtheorem{theorem}{Theorem}
\newtheorem{proposition}{Proposition}
\numberwithin{theorem}{section}
\numberwithin{lem}{section}
\numberwithin{equation}{section}
\numberwithin{proposition}{section}
\numberwithin{corollary}{section}
\title[Carr-Penrose Model]{On Global Asymptotic Stability for the diffusive Carr-Penrose Model }
\author{Joseph G. Conlon and Michael Dabkowski}
\address{(Joseph G. Conlon): University of Michigan\\ Department of Mathematics\\ Ann Arbor,
  MI 48109-1109}
\email{conlon@umich.edu}
\address{(Michael Dabkowski): University of Michigan-Dearborn \\ Department of Mathematics and Statistics\\ Dearborn,
  MI 48128}
\email{mgdabkow@umich.edu}
\keywords{nonlinear pde, coarsening}
\subjclass{35F05,  82C70, 82C26}
\begin{document}

\maketitle

\begin{abstract}
This paper is concerned with large time behavior of the solution to a diffusive perturbation of the linear LSW model introduced by Carr and Penrose.  Like the LSW model,  the Carr-Penrose model has a family of rapidly decreasing  self-similar solutions, depending on a parameter $\beta$ with $0<\beta\le 1$.  It is shown that if the initial data has compact support then the solution to the diffusive model at large time approximates the $\beta=1$ self-similar solution. This result supports the intuition that diffusion provides the mechanism whereby the $\beta=1$ self-similar solution of the LSW model is the only physically relevant one.

\end{abstract}

\section{Introduction}
   In this paper we continue our study of the  diffusive Carr-Penrose (CP) model introduced in \cite{cdw}.  The model is obtained by adding a second order diffusion term with coefficient $\ve/2>0$ to the Carr-Penrose equation \cite{cp}. The density function  $c_\ve(x,t)$ evolves according to a linear diffusion equation, subject to the linear mass conservation constraint as follows:
\begin{align}
\frac{\pa c_\ve(x,t)}{\pa t} \ &=  \ \frac{\pa}{\pa x}\left\{\left[1-\frac{x}{\La_\ve(t)}\right]c_\ve(x,t)\right\}+\frac{\ve}{2}\frac{\pa^2 c_\ve(x,t)}{\pa x^2},  \quad x>0, \label{A1}\\
\int_0^\infty x c_\ve(x,t) dx \ &= \  1. \label{B1}
\end{align}
We also need to impose a boundary condition at $x=0$ to ensure that (\ref{A1}), (\ref{B1}) with given initial data $c_\ve(\cdot,0)$, satisfying the constraint (\ref{B1})  has a unique solution.  We impose the Dirichlet boundary condition $c_\ve(0,t)=0, \ t>0$. This condition has the advantage that   the parameter $\La_\ve(t) > 0$ in (\ref{A1}) is  given by a simple formula
\be \label{C1}
\La_\ve(t) = \int^\infty_0 \ x c_\ve(x,t)dx \Big/ \int^\infty_0 c_\ve(x,t) dx        \ = \ 1\Big/ \int^\infty_0 c_\ve(x,t) dx \ .
\ee

In $\S2$ where we discuss the more physically relevant Becker-D\"{o}ring (BD) \cite{bd} and Lifschitz-Slyozov-Wagner (LSW) \cite{ls,w} models, we give some justification for diffusive models with zero Dirichlet condition. 
Penrose \cite{penrose1} argued that solutions of the super-critical  BD model are at large time approximate solutions to the LSW model. This claim was given some rigorous justification by Niethammer \cite{niet}, but much remains to be understood. In particular, the large time behavior of solutions to the LSW model is conjectured to be given by one of a family of self-similar solutions to LSW \cite{cd1}.  These can be conveniently parametrized by a single number $\beta$ with $0<\beta\le 1$. 
Already in the original papers \cite{ls,w} it  was claimed that the only physically relevant self-similar solution to the LSW model is the critical one with $\beta=1$.  Therefore one expects following Penrose \cite{penrose1} that solutions to the super-critical  BD model converge at large time to the critical equilibrium, which forms a boundary layer close to the origin, with the time evolution of the excess mass being well approximated by the $\beta=1$ self-similar solution to the LSW model. 
The only rigorous result known in this direction was obtained in Ball et al \cite{bcp}, where  for the super-critical BD model weak convergence to the critical  equilibrium is established. This implies that the excess mass drifts to infinity at large time, but nothing more precise.  The problem of determining  on what time scale the LSW time evolution becomes a good approximation for the time evolution of the excess mass appears to be extremely subtle. This has been illustrated by some examples  in the paper of Penrose \cite{penrose2}. 

In \cite{cs} we introduced a continuous non-linear Fokker-Planck (NFP) model which bears many similarities to the discrete BD model. In fact it is shown in \cite{cs} that there is a continuous interpolation from the BD to the NFP model. The interpolated models are like BD also discrete, but on the chain $\ve\mathbb{Z}^+$  with $0<\ve\le 1$. In this interpolation the BD model is given by $\ve=1$, and the NFP model by the limit as $\ve\ra0$.  The main result of \cite{cs} is the proof of convergence to equilibrium, so a continuous analogue of the main result of \cite{bcp}. The methodology is also similar, using the fact that a free energy functional exists, which decreases along trajectories of the solution to NFP.  Comparison of the BD system to a continuous diffusive system was first considered by  
Vel\'{a}zquez \cite{vel}. It was argued further in the physics literature \cite{meer,rz} that diffusion may be the mechanism of the selection principle, whereby the $\beta=1$ self-similar solution of LSW describes the asymptotic behavior of the excess mass in the super-critical BD model and some other models of Ostwald ripening  \cite{pego}.

In the NFP model the Fokker-Planck equation is solved with  a non-zero Dirichlet boundary condition, which couples to a parameter in the PDE and conservation law. 
In $\S2$ we observe that one can introduce a second parameter  into the model, which does not affect the Fokker-Planck dynamical law or conservation law, but sends the Dirichlet boundary condition to zero as this parameter goes to infinity. Therefore it is reasonable to expect that the study of the zero Dirichlet boundary condition model yields some insight into
the NFP model. Furthermore, since it is also a diffusive model one expects that the selection principle for the $ \beta=1$ self-similar solution of LSW continues to operate. In the present paper we verify this is the case for the much simpler diffusive CP model (\ref{A1}), (\ref{B1}). 

The system (\ref{A1}), (\ref{B1}) with $\ve\ge 0$  can be interpreted as an evolution  equation for the probability density function (pdf) of random variables. Thus let us assume that the initial data $c_\ve(x,0)\ge 0, \ x>0,$ for (\ref{A1}), (\ref{B1}) satisfies $\int_0^\infty c_\ve(x,0) \ dx<\infty$, and let $X_{\ve,0}$ be the non-negative random variable with  pdf $c_\ve(\cdot,0)/\int_0^\infty c_\ve(x,0) \ dx$. The conservation law (\ref{B1}) implies that the mean $\langle X_{\ve,0}\rangle$ of $X_{\ve,0}$ is finite, and this is the only absolute requirement on the variable $X_{\ve,0}$.  If for $t>0$ the variable $X_{\ve,t}$ has pdf  $c_\ve(\cdot,t)/\int_0^\infty c_\ve(x,t) \ dx$, then (\ref{C1}) implies that  $\La_\ve(t)=\langle  X_{\ve,t}\rangle$, and hence (\ref{A1}) becomes  an evolution equation for the pdf of $X_{\ve,t}$.  

There is an infinite  one-parameter family of self-similar solutions to (\ref{A1}), (\ref{B1}) with $\ve=0$. Using the normalization $\langle X_0\rangle =1$, the initial data for these solutions are given by
\be \label{D1}
P(X_0>x) \ = \ 
\begin{cases}
   [1-(1-\beta)x]^{\beta/(1-\beta)} \ , \ 0<x<1/(1-\beta),  \quad &\text{ if \ } 0<\beta<1, \\
   e^{-x}  &\text{ if \ } \beta=1, \\
 [1+(\beta-1)x]^{\beta/(1-\beta)} \ , \ 0<x<\infty,  \quad  & \ \text{if \ } \beta>1.
\end{cases}
\ee
The random variable $X_t$ corresponding to the evolution (\ref{A1}), (\ref{B1}) with $\ve=0$ and  initial data (\ref{D1}) is then given by
\be \label{E1}
X_t \ = \ \langle X_t\rangle X_0 \ , \quad \frac{d}{dt} \langle X_t\rangle  \ = \beta \ .
\ee
The main result of \cite{cp}  is that a solution of (\ref{A1}), (\ref{B1}) with $\ve=0$ converges at large time to the self-similar solution with parameter $\beta$, provided the initial data  and the self similar solution of parameter $\beta$ behave in the same way at the end of their supports. The main result of this paper is that a similar result holds for the diffusive model (\ref{A1}), (\ref{B1}) with $\ve>0$ and initial data of compact support. In this case a selection principle operates, so that the asymptotic behavior is given by the $\beta=1$ self-similar solution (\ref{D1})  corresponding to the exponential variable:
\begin{theorem}
 Assume the initial data  $c_\ve(\cdot,0)$ for (\ref{A1}), (\ref{B1}) is non-negative, integrable and has compact support. Let $X_{\ve,t}, \ t>0,$ be the random variable corresponding to the solution $c_\ve(\cdot,t)$ of (\ref{A1}), (\ref{B1}) with Dirichlet boundary condition $c_\ve(0,t)=0$.  Then
 \be \label{F1}
 \frac{X_{\ve,t}}{\langle X_{\ve,t}\rangle} \xrightarrow{D} \mathcal{X} \ \ {\rm as \ \ } t\ra\infty \ , \quad 
 \lim_{t\ra\infty} \frac{d}{dt}  \langle X_{\ve,t}\rangle \ = \ 1 \ ,
 \ee
 where $\mathcal{X}$ is the exponential random variable with mean $1$, and $\xrightarrow{D}$ denotes convergence in distribution. 
\end{theorem}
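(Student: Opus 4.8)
The plan is to pass to self-similar variables and reduce (\ref{F1}) to two statements: that a rescaled Laplace transform converges to that of $\mathcal{X}$, and that $\dot\La_\ve(t)\to1$. Write $u_\ve(t)=\int_0^\infty c_\ve(x,t)\,dx$, so that $\La_\ve(t)=1/u_\ve(t)=\av{X_{\ve,t}}$ by (\ref{C1}). Integrating (\ref{A1}) and using $c_\ve(0,t)=0$ gives $\dot u_\ve=-\tfrac\ve2\pa_x c_\ve(0,t)$, which is negative by the Hopf lemma (as $c_\ve>0$ for $x>0$); hence $\La_\ve$ is strictly increasing and $\dot\La_\ve=\tfrac\ve2\La_\ve^2\,\pa_x c_\ve(0,t)>0$. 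Introduce the slow time $\tau=\int_0^t ds/\La_\ve(s)$, the rescaled variable $Y_{\ve,t}=X_{\ve,t}/\La_\ve(t)$, its density $\tilde c(\xi,\tau)$ --- normalized so that $\int_0^\infty\tilde c\,d\xi=\int_0^\infty\xi\tilde c\,d\xi=1$ --- and the Laplace transform $\Phi_\ve(\sigma,\tau)=\av{e^{-\sigma Y_{\ve,t}}}$. A routine calculation from (\ref{A1}) (integrate by parts; in Laplace space the diffusion term becomes the zeroth-order multiplier $\ve\sigma^2/2\La_\ve$) shows that
\be
\pa_\tau\Phi_\ve=(\sigma+g)\,\Phi_\ve-g+(1-g)\,\sigma\,\pa_\sigma\Phi_\ve+\frac{\ve\sigma^2}{2\La_\ve}\,\Phi_\ve,\qquad g(\tau):=\dot\La_\ve(t),
\ee
together with the constraints $\Phi_\ve(0,\tau)=1$ and $-\pa_\sigma\Phi_\ve(0,\tau)=1$, the latter restating (\ref{B1}). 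Since $1/(1+\sigma)$ is the Laplace transform of $\mathcal{X}$, the continuity theorem for Laplace transforms of nonnegative random variables reduces the theorem to proving $\Phi_\ve(\sigma,\tau)\to1/(1+\sigma)$ for each $\sigma\ge0$, and $g(\tau)\to1$, as $\tau\to\infty$.

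First I would establish the a priori bounds for which the compact-support hypothesis is essential. Parabolic regularity gives that $c_\ve(\cdot,t)$ is smooth, positive and rapidly decaying for $t>0$, so the moments $m_n(t)=\int_0^\infty x^n c_\ve\,dx$ are finite and, from (\ref{A1}), satisfy $\dot m_n=-n\,m_{n-1}+\tfrac{n}{\La_\ve}m_n+\tfrac{\ve n(n-1)}2 m_{n-2}$ for $n\ge2$ (with $m_1\equiv1$, $m_0=u_\ve$). Combined with $m_2\ge\La_\ve$ (i.e.\ $\mathrm{Var}\,X_{\ve,t}\ge0$) these yield that $\La_\ve(t)$ is comparable to $t$ for large $t$ and that the rescaled moments $\av{Y_{\ve,t}^n}=m_n(t)/\La_\ve(t)^{n-1}$ remain bounded; hence $\{Y_{\ve,t}\}_{t\ge1}$ is tight, $\ve/\La_\ve(t)\to0$ (so the diffusive term in the $\Phi_\ve$-equation is asymptotically negligible), and $\tau\to\infty$ as $t\to\infty$. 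I would also make precise that $g(\tau)=\dot\La_\ve(t)$ is governed, up to a vanishing error, by a quasi-static boundary layer of width $O(\ve)$ at $x=0$: matching the inner solution of $\pa_x c+\tfrac\ve2\pa_x^2 c\approx0$, $c(0,t)=0$, to the bulk gives $\tfrac\ve2\pa_x c_\ve(0,t)$, hence $g(\tau)$, equal to the value the outer (bulk) rescaled density would take at the origin, which closes the limiting dynamics.

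With these estimates the core is the large-$\tau$ analysis of the $\Phi_\ve$-equation, a first-order transport equation whose characteristic speed $-(1-g)\sigma$ degenerates as $g\to1$ and whose source $-g$ must be balanced against the growth term $(\sigma+g)\Phi_\ve$; at $g=1$ the balanced profile is exactly $1/(1+\sigma)$, while for $g\neq1$ the transport term together with the constraints at $\sigma=0$ couples the different values of $\sigma$. Using the a priori bounds and tightness one extracts subsequential limits $(\Phi_\infty,g_\infty)$ of time-shifts of $(\Phi_\ve,g)$; one then argues that $\Phi_\infty$ is a stationary solution of the limiting zero-diffusion equation compatible with the two boundary constraints, hence the Laplace transform of the self-similar profile (\ref{D1}) with parameter $\beta=g_\infty$. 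What remains is to identify $g_\infty$.

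Proving $g_\infty=1$ --- the selection principle --- is the step I expect to be the main obstacle. For every $\beta$ the self-similar solution (\ref{D1}) is a stationary profile of the zero-diffusion rescaled dynamics with $\dot\La=\beta$; for $\beta<1$ it lives on $\xi=x/\La_\ve\le1/(1-\beta)$ with a power-law or jump edge there, and the term $\tfrac{\ve}{2\La_\ve}\pa_\xi^2$ acts singularly at such an edge even though its coefficient tends to $0$. One must turn into a quantitative statement the heuristic that a profile with a finite rescaled support-edge leaks mass past the edge, where the rescaled drift $-[1-(1-\beta)\xi]$ points outward, and that the factor $2\beta-1\ge0$ --- nonnegative precisely because the compact-support hypothesis should confine the competing exponents $\beta$ to $[\tfrac12,1)$ --- prevents the leaked mass from decaying as it is carried to $\xi=\infty$, contradicting the conserved normalizations $\int\tilde c\,d\xi=\int\xi\tilde c\,d\xi=1$. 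Equivalently, one wants a functional of $c_\ve$ that is monotone along the evolution and whose minimizers subject to those two moment constraints are exactly the exponential profiles. Once this is done, $g_\infty=1$ and $\Phi_\ve(\sigma,\tau)\to1/(1+\sigma)$, which gives both conclusions of (\ref{F1}) via the continuity theorem.
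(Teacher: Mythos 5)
Your strategy is genuinely different from the paper's: you pass to self-similar variables and the Laplace transform $\Phi_\ve(\sigma,\tau)$ and propose to extract subsequential limits, whereas the paper works directly with the explicit Gaussian Green's function $G_\ve$ for the whole-line problem and the ratio $K_{\ve,D}=G_{\ve,D}/G_\ve$, writes $K_{\ve,D}=1-e^{-q_\ve/\ve}$, and proves $q_\ve(x,y,T)/2x\to 1$ as $T\to\infty$ via a stochastic control representation of $q_\ve$ (Propositions 5.1--5.3). The log concavity of $K_{\ve,D}$, established in \cite{cd}, is the engine that drives essentially every step of the paper (the asymptotics of $\La_\ve$, $m_{1,A}$ etc.\ in Lemma 4.1, the bound on the beta function in Lemma 4.2, and the upper and lower coarsening estimates in Propositions 4.1--4.2); nothing in your moment recursion is a substitute for it.

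The concrete gap is that you do not prove the selection principle, and you say so yourself. The heuristic about mass leaking past the rescaled support edge, the claimed restriction of competing exponents to $\beta\in[\tfrac12,1)$ because of a factor $2\beta-1$, and the hope for a monotone functional whose constrained minimizers are exponentials are all unsubstantiated. There is no a priori reason compact support confines $\beta$ to $[\tfrac12,1)$: in the $\ve=0$ CP dynamics any $\beta\in(0,1]$ can be selected by compactly supported data, depending on the decay rate at the edge, so the restriction cannot come from that hypothesis alone. Nor is a Lyapunov functional known that singles out the exponential; the paper explicitly remarks that even the weaker Kohn--Otto upper bound on coarsening is ``delicate and does not easily extend'' to this setting. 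A secondary but real issue is your a priori bounds: the claim that the moment recursion $\dot m_n=-n\,m_{n-1}+(n/\La_\ve)m_n+\tfrac{\ve n(n-1)}2 m_{n-2}$ together with $m_2 m_0\ge1$ yields $\La_\ve(t)\asymp t$ and bounded rescaled moments is not obviously correct; the paper's proof of the corresponding statements (Lemma 4.1, Propositions 4.1--4.2) again leans on the log concavity of $K_{\ve,D}$, and the refined rate in Remark 2 is $\La_\ve(T)/T\simeq 1-1/(2\log T)$, which no finite collection of moment inequalities is likely to produce. Without the selection step your argument shows at best that subsequential limits lie in the one-parameter family \eqref{D1}, which is the conclusion of the much easier $\ve=0$ Carr--Penrose analysis, not of Theorem 1.1.
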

A result analogous to Theorem 1.1 was proved in \cite{cdw} for a reduced model which we denoted  the {\it inviscid} CP model, since a corresponding viscous CP model  is equivalent to the diffusive CP model (\ref{A1}), (\ref{B1}).  The evolution of the inviscid model
is determined by solving a first order non-linear PDE. This PDE is essentially the $\ve=0$ CP equation (\ref{A1}) with the addition of a quadratic non-linearity. The method of characteristics may be used to solve this Burgers' type equation in the case of no shocks. Hence  no boundary condition is needed at $x=0$ to guarantee uniqueness of the solution.  In $\S3$ we introduce a diffusive model,  also without boundary condition at $x=0$, by solving (\ref{A1}) on the whole line 
$-\infty<x<\infty$ with initial data which is zero on the half line $x<0$ and non-negative for $x>0$. The parameter $\La_\ve(t)$ in (\ref{A1}) is given by the first formula on the RHS of (\ref{C1}). The conservation law (\ref{B1}) now no longer holds.  We prove
a result analogous to Theorem 1.1 for this model.

The remainder of the paper consists of generalizing the results of the whole line diffusive CP model to the half line model  with zero Dirichlet condition at $x=0$. 
One can most easily see how this introduces additional subtlety  into the problem, by comparing the formulas for the rates of coarsening in the $\ve=0$ CP and
diffusive $\ve>0$ models.  On differentiating (\ref{C1}) we see that  if $\ve=0$ then
\be \label{G1}
\frac{d\La_0(t)}{dt} \ = \ c_0(0,t)\Big/\left[ \int^\infty_0 c_0(x,t) dx\right]^2 \ ,
\ee
whereas if $\ve>0$ the formula is given by
\be \label{H1}
\frac{d\La_\ve(t)}{dt} \ = \ \frac{\ve}{2}\frac{\pa c_\ve(0,t)}{\pa x}\Big/\left[ \int^\infty_0 c_\ve(x,t) dx\right]^2.
\ee
It  clearly follows from (\ref{G1}) that the function $t\ra \La_0(t)$ is increasing. We also see from (\ref{H1}) and the maximum principle \cite{pw} applied to (\ref{A1}) that the function $t\ra\La_\ve(t)$ is strictly  increasing if $\ve>0$. 

In comparing the CP to the diffusive CP model, an obvious question to ask is if the limit of the solution to the diffusive model on a fixed time interval $0<t\le T$, with parameter $\ve>0$  and given initial data independent of $\ve$, converges as $\ve\ra0$ to the solution of the CP model with the same initial data.  A strong convergence result was proven in \cite{cdw}, and a similar result for a diffusive LSW model in \cite{c1}.  In $\S2$ we explain how the main theorem of Niethammer \cite{niet}  may be interpreted as an analogous result for the BD model.  In the proof of convergence for the diffusive CP and LSW models a boundary layer analysis is necessary. We can see this from (\ref{G1}), (\ref{H1}) since these equations suggest that
\be \label{I1}
\lim_{\ve\ra 0} \frac{\ve}{2}\frac{\pa c_\ve(0,t)}{\pa x} \ = \ c_0(0,t) \ .
\ee
From (\ref{I1}) we expect  there is a boundary layer of size $O(\ve)$ at the origin, within which the function $x\ra c_\ve(x,t)$ increases rapidly from $0$ to $c_0(0,t)$. 
The analysis of this boundary layer is the main source of difficulty in proving convergence.

Theorem 1.1 concerns large time behavior of solutions to the diffusive problem. That is $\ve>0$ is fixed, and we are interested in the behavior of solutions $c_\ve(\cdot,t)$ to (\ref{A1}), (\ref{B1}) with Dirichlet condition $c_\ve(0,\cdot)=0$ as $t\ra\infty$. There is a close relation between this problem and the problem of the convergence of $c_\ve(\cdot,t)$ as $\ve\ra 0$ on a fixed time interval $0<t\le T$. The reason is that $\lim_{t\ra\infty}\La_\ve(t)=\infty$.  Since the $\ve=0$ CP model is scale invariant, rescaling of the PDE (\ref{A1}) so that the mean mass at large time $T$ is $O(1)$ makes the diffusion constant  in (\ref{A1}) small while leaving the other terms the same. Therefore large time behavior $T\ra\infty$ of $c_\ve(\cdot,T)$   may be estimated by means of $\ve\ra 0$ analysis, {\it provided} we can establish certain uniformity properties on the solution as $T\ra\infty$. 

Uniformity is measured in terms of the boundedness of the {\it beta} function of a random variable, which is defined by (\ref{W2}), (\ref{X2}).  There is a close relationship between boundedness properties of the beta function and log concavity properties of the pdf.  In \cite{cdw} we showed that if the initial data $c_\ve(\cdot,0)$  for (\ref{A1}) with zero Dirichlet condition satisfies a log concavity property then  $c_\ve(\cdot,t)$ has the same log concavity property for all $t>0$.
Equivalently, if the beta function  for $X_{\ve,0}$ is bounded by $1$ then the beta function for $X_{\ve,t}$ is bounded by $1$ for all $t>0$.  Since the coarsening rate $d\La_\ve(t)/dt$ can be estimated in terms of the beta function of $X_{\ve,t}$, this implies a uniform upper bound on the rate of coarsening as $t\ra\infty$.  

The main technical issues of this paper are concerned with the ratio (\ref{A3}) of the Dirichlet Green's function for (\ref{A1}) on the half line to the whole line Green's function.  The whole line Green's function is explicitly given by the Gaussian (\ref{D2}). In \cite{cd} it is shown that this ratio satisfies a log concavity condition. Using this fact and other estimates from \cite{cd}  we are able to prove that if the initial data for (\ref{A1}) has compact support then the beta function of $X_{\ve,T}$ is bounded at large $T$, and furthermore converges to the beta function of the exponential variable, whose beta function is simply the constant $1$. Convergence of the coarsening rate as in (\ref{F1}) then follows from an $\ve\ra 0$ analysis. 

\vspace{.1in}

\section{Relation to the BD, NFP and LSW models}
The NFP model introduced in \cite{cs} is a continuous version of the BD model in which the dynamics is given by a Fokker-Planck (FP) equation on the positive real line with a time varying parameter. The parameter is coupled to a Dirichlet boundary condition and a conservation law. The  FP equation is given by
\be \label{A6}
\pa_tc(x,t)+\pa_x(b(x,t)c(x,t)) \ = \ \pa^2_x(a(x)c(x,t)) \ , \quad x,t>0 \ ,
\ee
where the drift $b(\cdot,\cdot)$ is of the form
\be \label{B6}
b(x,t) \ = \ a(x)[\theta(t)W'(x)-V'(x)] \ .
\ee
The functions $V(\cdot), \ W(\cdot)$ are assumed to be  positive $C^1$ and increasing.   The conservation law is given by
\be \label{C6}
A\theta(t)+\int_0^\infty W(x) c(x,t) \ dx \ = \ \rho \ , \quad {\rm where \ } A\ge 0  \ {\rm is \ constant}.
\ee
It is easy to see that $c(x,t)=c_\theta^{\rm eq}(x)$ where
\be \label{D6}
c_\theta^{\rm eq}(x) \ = \ a(x)^{-1}\exp(-V(x)+\theta W(x)) \ ,
\ee
is an equilibrium solution to (\ref{A6}), (\ref{B6}). 
The Dirichlet boundary condition is given in terms of this equilibrium solution  by
\be \label{E6}
c(0,t) \ = \ c_{\theta(t)}^{\rm eq}(0) \ , \quad t>0 \ .
\ee
The NFP model consists of solving (\ref{A6}) subject to the constraints (\ref{C6}), (\ref{E6})  with given non-negative initial data. 

Observe that if we replace the potential $V(\cdot)$ with $V(\cdot)+M$ where $M$ is a constant, the PDE (\ref{A6}) and conservation law (\ref{C6}) do not change, just the boundary condition (\ref{E6}).  If we let  $M\ra\infty$ then (\ref{E6}) becomes the zero Dirichlet condition in the limit. Thus we have a continuous interpolation between the NFP model and a diffusive model with zero Dirichlet condition.

One can see from the formulation of the BD model in \cite{c1}, that if $a(\cdot), V(\cdot), W(\cdot)$ are given by the formulae
\be \label{F6}
a(x)= (1+x)^\al, \ 0<\al<1, \quad W(x)=1+x, \quad a(x)V'(x)=1, \ V(0)=0,
\ee
then the NFP model can be considered a continuous version of the BD model.  In the case of (\ref{F6}) equilibrium solutions (\ref{D6})  of the NFP model, 
which satisfy the conservation law (\ref{C6}), exist for all $\theta\le 0$.  The $\theta=0$ equilibrium solution is the {\it critical} equilibrium. Let $\rho_{\rm crit}$ be the value of $\rho$ in (\ref{C6}) which corresponds to the critical equilibrium, so
\be \label{G6}
\int_0^\infty W(x) c_0^{\rm eq}(x) \ dx \ = \ \rho_{\rm crit} \ .
\ee
In \cite{cs} it is shown that for $\rho>\rho_{\rm crit}$ in (\ref{C6}) the solution $c_\ve(\cdot,t)$ of (\ref{A6})-(\ref{E6}) converges weakly as $t\ra\infty$  to 
$c_0^{\rm eq}(\cdot)$. The main tool in the proof is the free energy functional $\mathcal{G}$ defined by
\be \label{H6}
\mathcal{G}(c(\cdot),\theta) \ = \ \int_0^\infty [\log \frac{c(x)}{c^{\rm eq}_0(x)}-1]c(x) \ dx+ \frac{A\theta^2}{2} \ ,
\ee
which decreases along orbits $t\ra c(\cdot,t)$ of the solution to (\ref{A6})-(\ref{E6}). One can easily see this by writing (\ref{A6}) as
\be \label{I6}
\pa_tc(x,t) \ = \ \pa_x\left(a(x)c(x,t)\pa_x\log\frac{c(x,t)}{ c_{\theta(t)}^{\rm eq}(x)}\right)  \ .
\ee
Letting $\mathcal{D}$ be the dissipation functional
\be \label{J6}
\mathcal{D}(c(\cdot),\theta) \ = \ \int_0^\infty  a(x)\left(\pa_x\log\frac{c(x)}{c^{\rm eq}_\theta(x)}\right)^2 c(x) \ dx \ ,
\ee
we see from (\ref{C6}), (\ref{E6}) and (\ref{H6}), (\ref{I6}) that
\be \label{K6}
\frac{d}{dt}\mathcal{G}(c(\cdot,t),\theta(t)) \ = \ -\mathcal{D}(c(\cdot,t),\theta(t))  \ .
\ee

Recently an alternative continuous model has been proposed by Goudon and Monasse \cite{gm}.  This model is quite similar to the NFP model, but the time varying parameter which occurs in the conservation law and boundary condition also enters the second derivative term in the Fokker-Planck PDE, in addition to the first derivative term as in (\ref{A6}), (\ref{B6}).  The parameter in \cite{gm} seems to represent mass concentrated at the origin, which is similar to the situation with the BD model. 
However there does not appear to be a free energy functional which decreases along orbits of the solution.  Therefore a proof of convergence at large time  to equilibrium is not available by Lyapounov's second method as was the case in \cite{bcp, cs}.  

A more closely related model is that studied by Niethammer and Pego \cite{np1}.   This is a generalized LSW model, which  can be obtained from the NFP model by eliminating the second derivative term in (\ref{A6}), yielding a first order PDE. The boundary condition (\ref{E6}) is no longer required, but the conservation law remains as in (\ref{C6}).  Global existence and uniqueness of solutions are proved, as well as continuous dependence on the initial data.  An important tool is the energy functional $\mathcal{E}$ defined by 
\be \label{L6}
\mathcal{E}(c(\cdot),\theta) \ = \ \int_0^\infty V(x)c(x) \ dx +\frac{A\theta^2}{2} \ .
\ee
Defining the dissipation functional by
\be \label{M6}
\mathcal{D}(c(\cdot),\theta) \ = \  \int_0^\infty a(x) [\theta W'(x)-V'(x)]^2c(x) \ dx  \  ,
\ee
we see that along orbits $t\ra c(\cdot,t)$ of the solution one has
\be \label{N6}
\frac{d}{dt}\mathcal{E}(c(\cdot,t),\theta(t)) \ = \ -\mathcal{D}(c(\cdot,t),\theta(t))  \ ,
\ee
provided $V(0)=W(0)=0$. 

We next  introduce a small parameter $\ve>0$ into the NFP model  with the goal of proving convergence on a fixed time scale to the LSW model as  $\ve\ra0$. Since we want this to be related to the problem of large time behavior of the NFP model,  we scale the solution $c(\cdot,\cdot)$ of (\ref{A6})-(\ref{E6})  by  setting $c_\ve(x,t)=\ga(\ve)^{-1}c(x/\ve,t/\ve)$, where  $\ga(\cdot)$ is a positive function. The PDE (\ref{A6}), (\ref{B6})  then becomes the PDE
\be \label{O6}
\pa_tc_\ve(x,t)+\pa_x(b_\ve(x,t)c_\ve(x,t)) \ = \ \pa^2_x(a_\ve(x)c_\ve(x,t)) \ , \quad x,t>0 \ ,
\ee
where
\begin{multline} \label{P6}
a_\ve(x)=\ve a(x/\ve), \quad b_\ve(x,t)= b(x/\ve,t/\ve)=a_\ve(x)[\theta_\ve(t)W'_\ve(x)-V'_\ve(x)] \ , \\
V_\ve(x)= V(x/\ve), \quad \theta_\ve(t)W_\ve(x)=\theta(t/\ve)W(x/\ve)  \ .
\end{multline}
This defines $\theta_\ve(\cdot), \ W_\ve(\cdot)$ up to a multiplicative constant. The constant is determined from the scaled conservation law
\be \label{Q6}
A_\ve\theta_\ve(t)+\int_0^\infty W_\ve(x) c_\ve(x,t) \ dx \ = \ \rho \ .
\ee
Thus we have that
\be \label{R6}
 A_\ve=\ve^{-1}\ga(\ve)A, \quad \theta_\ve(t)=\ve\ga(\ve)^{-1}\theta(t/\ve), \quad W_\ve(x)= \ve^{-1}\ga(\ve)W(x/\ve) \ .
\ee
The boundary condition (\ref{E6}) becomes
\be \label{S6}
c_\ve(0,t) \ = \ [\ga(\ve) a(0)]^{-1}\exp\left[-V(0)+\theta(t/\ve)W(0)\right] \ = \ c^{\rm eq}_{\theta_\ve(t),\ve}(0) \ ,
\ee
where the equilibrium density $c^{\rm eq}_{\theta,\ve}(\cdot)$ is given by 
\be \label{T6}
c_{\theta,\ve}^{\rm eq}(x) \ = \ [\ga(\ve)\ve^{-1}a_\ve(x)]^{-1}\exp(-V_\ve(x)+\theta W_\ve(x)) \ .
\ee
Note that
\be \label{U6}
\int_0^\infty W_\ve(x)c_{0,\ve}^{\rm eq}(x) \ dx \ = \ \rho_{\rm crit} \ ,
\ee
where $\rho_{\rm crit}$ is given by (\ref{G6}). 
We see from (\ref{H6}) that the  free energy $\mathcal{G}_\ve$  corresponding to (\ref{O6})-(\ref{T6}) is given by
\be \label{V6}
\mathcal{G}_\ve(c(\cdot),\theta) \ = \ \int_0^\infty [\log \frac{c(x)}{c^{\rm eq}_{0,\ve}(x)}-1]c(x) \ dx+ \frac{A_\ve\theta^2}{2} \ .
\ee
Note from (\ref{H6}), (\ref{V6}) that
\be \label{W6}
\mathcal{G}_\ve(c_\ve(\cdot,t),\theta_\ve(t)) \ = \ \ve\ga(\ve)^{-1}\mathcal{G}(c(\cdot,t/\ve),\theta(t/\ve)) \ .
\ee

We consider now the situation where the coefficients are given by (\ref{F6}).  We choose $\ga(\ve)$ such that $\lim_{\ve\ra 0} W_\ve(\cdot)=W_0(\cdot)$ exists.  Evidently we should take $\ga(\ve)=\ve^2$, in which case $W_0(x)=x$.  Next we need that $\lim_{\ve\ra0} a_\ve(x)\theta_\ve(t)W'_\ve(x)$ exists. Defining $\tilde{\theta}_\ve(t)= \ve^{1-\al}\theta_\ve(t)$, we see the limit exists if $\lim_{\ve\ra0}\tilde{\theta}_\ve(t)=\tilde{\theta}_0(t)$ exists. The conservation law (\ref{Q6}) then becomes
\be \label{X6}
A\ve^\al\tilde{\theta}_\ve(t)+\int_0^\infty W_\ve(x) c_\ve(x,t) \ dx \ = \ \rho \ .
\ee
 Letting $\ve\ra0$ and assuming $\lim_{\ve\ra0}c_\ve(x,t)=c_0(x,t), \ \lim_{\ve\ra0}\tilde{\theta}_\ve(t)=\tilde{\theta}_0(t)$, we see that $c_0(\cdot,\cdot)$ is formally a solution to the LSW equation
\be \label{Y6}
\frac{\pa c_0(x,t)}{\pa t}+\frac{\pa}{\pa x}\left[\theta_0(t) x^\al-1\right] \ = \ 0 \ .
\ee
The formal limit of the conservation law (\ref{X6}) yields
\be \label{Z6}
\int_0^\infty W_0(x) c_0(x,t) \ dx \ = \ \rho \ .
\ee

The issue of the limiting behavior as $\ve\ra 0$ of solutions $c_\ve$ to (\ref{O6})-(\ref{T6}) with $a(\cdot), V(\cdot), W(\cdot)$ as in (\ref{F6}) is actually more subtle than indicated due to the fact that the boundary condition $c_\ve(0,t)=c^{\rm eq}_{\theta_\ve(t),\ve}(0)$ tends to force $c_\ve$ to blow up as $\ve\ra 0$ since it is given by the formula
\be \label{AA6}
c_\ve(0,t) \ = \ \ve^{-2}\exp[\ve^\al\tilde{\theta}_\ve(t)] \ .
\ee
This happens unless the initial data within the boundary layer $x= O(\ve)$ is close to the equilibrium $c^{\rm eq}_{0,\ve}(\cdot)$, hence depending on $\ve$. Outside the boundary layer it may be taken independent of $\ve$. The closeness to equilibrium inside the boundary layer then yields from (\ref{U6}), (\ref{X6})  the limiting conservation law 
\be \label{AB6}
\int_0^\infty W_0(x) c_0(x,t) \ dx \ = \ \rho-\rho_{\rm crit} \ ,
\ee
 instead of (\ref{Z6}).

Niethammer \cite{niet} (and Schlichting \cite{sc} by a different method)  has rigorously shown in the context of the Becker-D\"{o}ring model that a weak limit $\lim_{\ve\ra0}c_\ve$ does exist and is a solution to the LSW model (\ref{Y6}), (\ref{AB6}). Closeness of the initial data to equilibrium within the boundary layer is measured in terms of the order of magnitude of the excess free energy 
$\mathcal{G}_\ve(c_\ve(\cdot,0),0)-\mathcal{G}_\ve(c^{\rm eq}_{0,\ve},0)$ as $\ve\ra 0$. 
With $a(\cdot), V(\cdot), W(\cdot)$ as in (\ref{F6}) we have that
\begin{multline} \label{AC6}
\ve^{1-\al}\mathcal{G}_\ve(c_\ve(\cdot,t),\theta_\ve(t)) \ = \ \int_0^\infty\left\{\frac{(\ve+x)^{1-\al}-\ve^{1-\al}}{1-\al}\right\} c_\ve(x,t) \ dx \\
+\ve^{1-\al}\left[\int_0^\infty [\log c_\ve(x,t)-1]c_\ve(x,t) \ dx+\int_0^\infty\log\{\ve^{2-\al}(\ve+x)^\al\} c_\ve(x,t) \ dx \right] + \frac{1}{2}\ve^\al A\tilde{\theta}_\ve(t)^2 \ .
\end{multline}
If $\lim_{\ve\ra 0}c_\ve=c_0$ exists then the first term on the RHS of (\ref{AC6}) converges as $\ve\ra 0$  to the LSW energy $(1-\al)^{-1}\int_0^\infty x^{1-\al} c_0(x,t) \ dx$ corresponding to (\ref{L6}).    Niethammer's main condition is that  $\ve^{1-\al}$ times the excess free energy is bounded independent of $\ve>0$. It is quite easy to come up with a large class of initial data which satisfy the Niethammer condition. Thus suppose $x\ra c_0(x,0), \ x\ge0,$ is a continuous non-negative integrable function which satisfies (\ref{AB6}) with $t=0$.  We define initial data for the system (\ref{O6})-(\ref{T6}) as
\begin{multline} \label{AD6}
c_\ve(x,0) \ = \ c_{0,\ve}^{\rm eq}(x),   \    {\rm if \ } 0\le x<\ve\left[M\log(1/\ve)\right]^{1/(1-\al)}, \\
 c_\ve(x,0) \ = \ c_0(x,0),  \ {\rm if} \ x\ge \ve\left[M\log(1/\ve)\right]^{1/(1-\al} \ ,
\end{multline}
where the constant $M$ satisfies $M>1$. 
For the initial data (\ref{AD6}) it follows from (\ref{U6}), (\ref{X6}), (\ref{AB6}) at $t=0$ that $\tilde{\theta}_\ve(0)=O[\ve^{1-\al}]$. Thus the final term on the RHS of (\ref{AC6}) is $O[\ve^{2-\al}]$ as $\ve\ra0$ and hence bounded independent of $\ve$. If we subtract from the middle term the corresponding quantity with $c_\ve(\cdot,0)$ replaced by $c_{0,\ve}^{\rm eq}(\cdot)$ we obtain an integral over the interval $\ve\left[M\log(1/\ve)\right]^{1/(1-\al)}<x<\infty$. Evidently this difference is $O[\ve^{1-\al}]$, whence again uniformly bounded as $\ve\ra 0$. Finally if we subtract   from the first term the corresponding quantity with $c_\ve(\cdot,0)$ replaced by $c_{0,\ve}^{\rm eq}(\cdot)$ we obtain the LSW energy for $c_0(\cdot,0)$ plus $O[\ve^{1-\al}]$.  Assuming Niethammer's method applies to the present situation,  one expects it to imply that the solution $c_\ve$ to  (\ref{O6})-(\ref{T6}) with initial data (\ref{AD6}) converges weakly to the corresponding solution of the LSW model (\ref{Y6}), (\ref{AB6}). 

So far there are no proofs of strong convergence of solutions to (\ref{O6})-(\ref{T6}) with initial data (\ref{AD6}). However strong convergence for a closely related problem (with $\al=1/3$) was proven in \cite{c1}.  The important difference between the system studied in \cite{c1} and the present one lies in the boundary condition. The diverging Dirichlet condition (\ref{AA6}) is replaced by the zero one $c_\ve(0,t)=0$. There are some other minor differences: in \cite{c1} the conservation law  differs from (\ref{Q6}) in taking $A_\ve=0$ and replacing $W_\ve$ by $W_0$; $\ve^{\al-1}a_\ve(x)W'_\ve(x)=(\ve+x)^\al$ is replaced by $x^\al$. 

As far as understanding large time behavior of the system (\ref{A6})-(\ref{E6}), the replacement of the Dirichlet condition (\ref{E6}) by the zero Dirichlet condition should simplify the problem considerably. One also expects that  good understanding of the zero Dirichlet case will yield significant insight into the 
non-zero Dirichlet case.  Another 
important property of the zero Dirichlet boundary condition model is that the Kohn-Otto argument \cite{ko} may be applied to yield a global weak upper bound on the rate of coarsening. This has been shown for the model in \cite{c1}. However the argument is delicate and does not easily extend  to the slightly modified model  with coefficients (\ref{F6}) and conservation law (\ref{C6})  considered here. 

\vspace{.1in}

\section{The whole line problem}
We consider here the problem of solving the PDE (\ref{A1})  on the whole line $\R=\{-\infty<x<\infty\}$ with initial data which is non-negative on the positive half line $\{x>0\}$ and zero on the negative half line $\{x<0\}$. The parameter function $\La_\ve(\cdot)$ is given by (\ref{C1}), whence the problem is non-linear.  We have now instead of (\ref{D1}) the formula
\be \label{A2}
\frac{1}{\La_\ve(t)}\frac{d\La_\ve(t)}{dt} \ = \ \left\{c_\ve(0,t)+\frac{\ve}{2}\left[\frac{\pa c_\ve(0,t)}{\pa x}+\frac{c_\ve(0,t)}{\La_\ve(t)}\right]\right\} \Big/ \int^\infty_0 c_\ve(x,t) dx \ .
\ee
We shall see below that  (\ref{A2}) implies  $\La_\ve(\cdot)$ is  an increasing function, although $\pa c_\ve(0,t)/\pa x$ may be negative. 

There is an explicit formula for the whole line Green's function for (\ref{A1}).  Let $A:[0,\infty)\ra\mathbb{R}$ be a continuous function and define related functions
$m_{1,A},m_{2,A},\sig^2_A:[0,\infty)\ra\mathbb{R}^+$ by 
\begin{multline} \label{B2}
m_{1,A}(T)= \exp\left[\int_0^T A(s') ds'\right], \quad 
m_{2,A}(T)= \int_0^T \exp\left[\int_{s}^T A(s') ds'\right]  \  ds \ ,  \\
\sig_A^2(T)= \int_0^T \exp\left[2\int_{s}^T A(s') ds'\right] ds \ .  
\end{multline}
The solution to (\ref{A1}) on the whole line with initial data  $c_\ve(x,0), \ x\in\mathbb{R}$, which is supported on the half line $\mathbb{R}^+$,  has the representation 
\be \label{C2}
c_\ve(x,T) \ = \ \int_0^\infty G_\ve(x,y,0,T) c_\ve(y,0) \ dy \ , \quad x\in\mathbb{R}, \ T>0. 
\ee
The Green's function $G_\ve(x,y,0,T)$ is defined (see  eqn. (2.5) of \cite{cd})  by the formula
\be \label{D2}
G_\ve(x,y,0,T)=\frac{1}{\sqrt{2\pi\ve\sig_A^2(T)}}\exp\left[-\frac{\{x+m_{2,A}(T)-m_{1,A}(T)y\}^2}{2\ve\sig_A^2(T)}\right] \ ,
\ee
where $m_{1,A},m_{2,A},\sig^2_A$ are as in (\ref{B2}) and $A(\cdot)\equiv1/\La_\ve(\cdot)$. 
Observe now that
\be \label{E2}
\left[1+\frac{\ve}{2}\frac{\pa}{\pa x}\right]G_\ve(x,y,0,T)  \Big|_{x=0}\ = \ \left[1-\frac{m_{2,A}(T)}{2\sig^2_A(T)}+\frac{m_{1,A}(T)y}{2\sig^2_A(T)}\right]G_\ve(0,y,0,T) \ .
\ee
The RHS of (\ref{E2}) is non-negative for $y>0$ if $A(\cdot)\ge 0$, and consequently from (\ref{C2})  the RHS of  (\ref{A2}) is also non-negative. We have shown that  the function $\La_\ve(\cdot)$ is increasing.  
\begin{lem}
Assume $c_\ve(x,0), \ x\in\R,$  is a non-negative function satisfying $c_\ve(x,0)=0$ for $x<0$, and the integrability condition
\be \label{F2}
0 \ < \ \int_0^\infty(1+x)c_\ve(x,0) \ dx \ < \ \infty \ .
\ee
Let $c_\ve(\cdot,t), \ t>0,$ be the solution to the whole line PDE (\ref{A1}) with the constraint (\ref{C1}) and initial data $c_\ve(\cdot,0)$.  Then the function $\La_\ve(\cdot)$ is increasing and $\lim_{T\ra\infty}\La_\ve(T)=\infty$. If in addition $c_\ve(\cdot,0)$ has compact support, then $\lim_{T\ra\infty}m_{1,A}(T)=\infty$, $\lim_{T\ra\infty} \frac{m_{2,A}(T)}{m_{1,A}(T)}=\infty$,  $\lim_{T\ra\infty} \frac{\sig^2_A(T)}{m_{2,A}(T)}=\infty$, and $\lim_{T\ra\infty} \frac{\sig^2_A(T)}{m_{1,A}(T)^2}<\infty$, where $A(\cdot)\equiv1/\La_\ve(\cdot)$. 
\end{lem}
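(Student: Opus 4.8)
The monotonicity of $\La_\ve(\cdot)$ has already been established above, from the non-negativity of the right side of~\eqref{E2} and hence of~\eqref{A2}. To prove $\lim_{T\to\infty}\La_\ve(T)=\infty$ I would argue by contradiction, using the probabilistic reading of~\eqref{C2}--\eqref{D2}: with $N_0:=\int_0^\infty c_\ve(y,0)\,dy$ (finite by~\eqref{F2}), $X_{\ve,0}$ the variable with density $c_\ve(\cdot,0)/N_0$ and $Z$ an independent standard normal, the density on $\R$ of $\tilde X_{\ve,T}:=m_{1,A}(T)X_{\ve,0}-m_{2,A}(T)+\sqrt\ve\,\sig_A(T)Z$ is $c_\ve(\cdot,T)/N_0$, so $\La_\ve(T)=E[\tilde X_{\ve,T}\mid\tilde X_{\ve,T}>0]=m_{1,A}(T)\,E[(\tilde X_{\ve,T}/m_{1,A}(T))^+]\,/\,P(\tilde X_{\ve,T}>0)$. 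If $\La_\ve(T)\uparrow L<\infty$ then $1/\La_\ve(\cdot)\to 1/L$, so $m_{1,A}(T)\to\infty$, while dominated convergence in $m_{2,A}(T)/m_{1,A}(T)=\int_0^T ds/m_{1,A}(s)$ and $\sig^2_A(T)/m_{1,A}(T)^2=\int_0^T ds/m_{1,A}(s)^2$ gives $m_{2,A}(T)/m_{1,A}(T)\to L$ and $\sig_A(T)/m_{1,A}(T)\to\sqrt{L/2}$; hence $\tilde X_{\ve,T}/m_{1,A}(T)\xrightarrow{D}W:=X_{\ve,0}-L+\sqrt{\ve L/2}\,Z$, a variable with strictly positive density, so $P(W>0)\in(0,1)$ and $E[W^+]>0$. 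Since the positive parts $(\tilde X_{\ve,T}/m_{1,A}(T))^+$ are dominated by the fixed integrable variable $X_{\ve,0}+\sqrt\ve\,(\sup_T\sig_A(T)/m_{1,A}(T))\,|Z|$ they are uniformly integrable, so $E[(\tilde X_{\ve,T}/m_{1,A}(T))^+]\to E[W^+]$ and $P(\tilde X_{\ve,T}>0)\to P(W>0)$, whence $\La_\ve(T)\to\infty$, a contradiction.

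For the statements under compact support, say $\mathrm{supp}\,c_\ve(\cdot,0)\subseteq[0,R]$. Put $\tau=\tau(T):=\sqrt\ve\,\sig_A(T)$ and $\xi(y):=m_{1,A}(T)y-m_{2,A}(T)$, and let $h(\xi):=\xi+\tau\,\phi(\xi/\tau)/\Phi(\xi/\tau)$ be the mean of an $\mathcal N(\xi,\tau^2)$ variable conditioned to be positive ($\phi,\Phi$ the standard normal density and distribution function). Integrating the Gaussian~\eqref{D2} in $x$ over $(0,\infty)$ exhibits $\La_\ve(T)$ as the average $\int_0^R h(\xi(y))\Phi(\xi(y)/\tau)c_\ve(y,0)\,dy\big/\int_0^R \Phi(\xi(y)/\tau)c_\ve(y,0)\,dy$; since $h$ is increasing (its derivative is the normalized conditional variance) and $\xi(\cdot)$ is increasing, $h(-m_{2,A}(T))\le\La_\ve(T)\le h(m_{1,A}(T)R-m_{2,A}(T))$. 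Elementary Mills-ratio inequalities give, for every $\xi$ and every $b\ge 0$, $h(\xi)\le\xi^++\sqrt{2/\pi}\,\tau$ and $h(-b)\ge c_0\tau^2/(\tau+b)$ with a universal $c_0>0$; using $\sig^2_A(T)=m_{1,A}(T)^2 I_2(T)$ and $m_{2,A}(T)=m_{1,A}(T)I_1(T)$, where $I_1(T):=\int_0^T ds/m_{1,A}(s)$ and $I_2(T):=\int_0^T ds/m_{1,A}(s)^2$, these turn into
\be \label{plan-ts}
\frac{c_0\,\ve\,m_{1,A}(T)\,I_2(T)}{\sqrt{\ve I_2(T)}+I_1(T)}\ \le\ \La_\ve(T)\ \le\ m_{1,A}(T)\Bigl(R+\sqrt{2\ve/\pi}\,\sqrt{I_2(T)}\,\Bigr).
\ee

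Then I would bootstrap the identity $m_{1,A}'(T)=m_{1,A}(T)/\La_\ve(T)$. The right inequality in~\eqref{plan-ts} gives $m_{1,A}'(T)\ge\bigl(R+\sqrt{2\ve/\pi}\sqrt{I_2(T)}\bigr)^{-1}$; feeding in the crude bound $I_2(T)\le T$ (valid since $m_{1,A}\ge 1$) and iterating --- each improved lower bound on $m_{1,A}$ sharpening the bound on $I_2$ and hence on $m_{1,A}'$ --- one obtains successively $m_{1,A}(T)\ge c_1\sqrt T$, then $I_2(T)=O(\log T)$, then $m_{1,A}(T)\ge c_2 T/\sqrt{\log T}$, then $I_2(T)\le C_2<\infty$ for all $T$, and finally $m_{1,A}(T)\ge 1+cT$ for some $c>0$. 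In particular $\lim m_{1,A}(T)=\infty$ and $\sig^2_A(T)/m_{1,A}(T)^2=I_2(T)$ has a finite limit. Next $m_{1,A}(T)\ge 1+cT$ forces $I_1(T)=O(1+\log T)$, so the left inequality in~\eqref{plan-ts} (with $I_2(T)$ now trapped between positive constants) gives $\La_\ve(T)\ge c_4\,m_{1,A}(T)/(1+\log T)$ for large $T$; integrating $m_{1,A}'(T)=m_{1,A}(T)/\La_\ve(T)\le(1+\log T)/c_4$ yields $m_{1,A}(T)\le C_5\,T\log T$. Hence $m_{2,A}(T)/m_{1,A}(T)=I_1(T)\ge C_5^{-1}\int^T ds/(s\log s)\to\infty$, and $\sig^2_A(T)/m_{2,A}(T)=m_{1,A}(T)I_2(T)/I_1(T)\ge c'\,T/(1+\log T)\to\infty$.

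I expect the crux to be this bootstrap: the conclusions demand \emph{matching} (essentially linear) two-sided control of $m_{1,A}$, and producing it relies on having both an upper and a lower estimate for the truncated-Gaussian mean $h$ that are sharp enough, i.e.\ on quantitative Mills-ratio bounds, together with the compact support hypothesis --- which is exactly what confines the arguments $\xi(y)/\tau$ to a range where those estimates bite.
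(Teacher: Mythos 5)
Your proof is correct and reaches all the stated conclusions, but it takes a genuinely different route than the paper. The paper's proof is a chain of proof-by-contradiction arguments: having established that $\La_\ve(\cdot)$ is increasing, it assumes in turn that each of the limits fails ($\La_\ve(T)$ bounded, then $m_{1,A}(T)$ bounded, then $m_{2,A}(T)/m_{1,A}(T)$ bounded), and in each case uses the Gaussian conditional-mean estimates of Lemma A.1 — the same Mills-ratio inequalities you use, phrased for $m(z)=E[Z-z\mid Z>z]$ — to obtain a bound on $E[X_{\ve,y,T}\mid X_{\ve,y,T}>0]$ uniform over the compact support, which contradicts the behavior of $\La_\ve(\cdot)$ established up to that point. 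You instead run a direct quantitative bootstrap on $m_{1,A}'(T)=m_{1,A}(T)/\La_\ve(T)$, sandwiched between your two-sided Mills-ratio bounds on $h$: iterating $I_2(T)\le T$, then $m_{1,A}(T)\ge c\sqrt T$, then $I_2(T)=O(\log T)$, then $m_{1,A}(T)\ge cT/\sqrt{\log T}$, then $I_2(\infty)<\infty$, then $m_{1,A}(T)\ge 1+cT$; and using the lower bound on $\La_\ve$ to get the matching $m_{1,A}(T)\le CT\log T$. This buys you explicit rates ($1+cT\le m_{1,A}(T)\le CT\log T$, $I_1(T)\asymp\log T$, $I_2(T)$ bounded) that the paper's contradiction argument leaves implicit, at the cost of a slightly more delicate iteration.

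One small error in the proof that $\La_\ve(T)\ra\infty$: you claim that $\La_\ve(T)\uparrow L<\infty$ forces $m_{2,A}(T)/m_{1,A}(T)\ra L$ and $\sig_A(T)/m_{1,A}(T)\ra\sqrt{L/2}$. That is not right in general: $m_{2,A}(T)/m_{1,A}(T)=\int_0^T ds/m_{1,A}(s)$ and $\sig_A^2(T)/m_{1,A}(T)^2=\int_0^T ds/m_{1,A}(s)^2$ converge to finite positive numbers $r_\infty$ and $\sig_\infty^2$ satisfying $\La_\ve(0)\le r_\infty\le L$ and $\La_\ve(0)/2\le\sig_\infty^2\le L/2$, but they equal $L$ and $L/2$ only when $A(\cdot)$ is identically $1/L$. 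This is immaterial to your argument: all you need is that $r_\infty$ and $\sig_\infty$ are finite and positive, so that $W=X_{\ve,0}-r_\infty+\sqrt\ve\,\sig_\infty Z$ has a strictly positive density on $\R$, hence $P(W>0)\in(0,1)$ and $E[W^+]>0$, and the domination/uniform-integrability step then gives $\La_\ve(T)\sim m_{1,A}(T)E[W^+]/P(W>0)\ra\infty$, the desired contradiction.
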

\begin{proof}
Since $\La_\ve(\cdot)$ is increasing we have that $\lim_{T\ra\infty}\La_\ve(T)=\La_\infty\le \infty$.  We assume $\La_\infty<\infty$ and obtain a contradiction.  With $A(\cdot)=1/\La_\ve(\cdot)$, we see from  (\ref{B2})  that the functions  $m_{1,A}, \ m _{2,A}, \ \sig_A^2$ satisfy
\be \label{G2}
\lim_{T\ra\infty}m_{1,A}(T)=\infty \ , \quad \La_\ve(0)\le\lim_{T\ra\infty} \frac{m_{2,A}(T)}{m_{1,A}(T)} \le \ \La_\infty \ ,
 \quad \frac{\La_\ve(0)}{2}\le\lim_{T\ra\infty} \frac{\sig^2_A(T)}{m_{1,A}(T)^2} \le \frac{\La_\infty}{2}  \ .
\ee
We can obtain a formula for the RHS of (\ref{C1}) in terms of expectations of the random variable  $X_{\ve,y,T}$ given by 
\be \label{H2}
X_{\ve,y,T} \ = \ m_{1,A}(T)y-m_{2,A}(T)+\sqrt{\ve}\sig_A(T)Z \ ,
\ee
where $Z$ is the standard normal variable. 
This follows by observing that
\begin{eqnarray} \label{I2}
\int_0^\infty c_\ve(x,T) \ dx \ &=& \ \int_0^\infty P(X_{\ve,y,T}>0) c_\ve(y,0) \ dy \ , \\
\int_0^\infty xc_\ve(x,T) \ dx \ &=& \ \int_0^\infty E[X_{\ve,y,T} \ | \ X_{\ve,y,T}>0]P(X_{\ve,y,T}>0) c_\ve(y,0) \ dy \ .  \nonumber
\end{eqnarray}

We have now that
\be \label{J2}
 E[X_{\ve,y,T} \ | \ X_{\ve,y,T}>0] \ = \ \sqrt{\ve}\sig_A(T)E[Z-z_{y,T}/\sqrt{\ve} \ | \ Z>z_{y,T}/\sqrt{\ve}] \ , \quad z_{y,T}=\frac{m_{2,A}(T)-m_{1,A}(T)y}{\sig_A(T)} \ .
\ee
Writing
\be \label{K2}
z_{y,T} \ = \ a(T)-b(T)y \ ,
\ee
we see from (\ref{G2}) that
\be \label{L2}
0<\inf_{T\ge 1} a(\cdot)\le \sup _{T\ge 1}a(\cdot)<\infty \ , \quad 0<\inf_{T\ge 1} b(\cdot)\le \sup_{T\ge 1} b(\cdot)<\infty \ .
\ee
We also have from Lemma A.1 there is a constant $c_\ve>0$ such that
\be \label{M2}
E[Z-z \ | \ Z>z] \ \ge \ \max\{c_\ve,-z\} \quad {\rm for \ } z\le \sup_{T\ge1} a(\cdot)/\sqrt{\ve} \ .
\ee
It follows from (\ref{C1}) and (\ref{I2})-(\ref{M2}) that
\be \label{N2}
\La_\ve(T) \ \ge \ c_\ve\sqrt{\ve}\sig_A(T) \ .
\ee
Since (\ref{G2}) implies that $\lim_{T\ra\infty}\sig_A(T)=\infty$, we conclude from (\ref{N2}) that $\lim_{T\ra\infty}\La_\ve(T)=\infty$, contradicting our assumption that $\La_\infty<\infty$. 

Having shown that $\lim_{T\ra\infty}\La_\ve(T)=\infty$, we next show that $\lim_{T\ra\infty}m_{1,A}(T)=\infty$. This implies that $\La_\ve(T)$ cannot grow too rapidly with $T$.  Since $m_{1,A}(\cdot)$ is an increasing function we have that  $\lim_{T\ra\infty}m_{1,A}(T)=m_{1,A}(\infty)\le \infty$. Arguing again by contradiction  we assume that
$m_{1,A}(\infty)<\infty$.  Using the identities
\be \label{O2}
 \frac{m_{2,A}(T)}{m_{1,A}(T)}  \ = \ \int_0^T \frac{dt}{m_{1,A}(t)} \ , \quad \frac{\sig^2_A(T)}{m_{1,A}(T)^2}  \ = \ 
  \int_0^T \frac{dt}{m_{1,A}(t)^2} \ ,
\ee
we see that the functions $a(\cdot), \ b(\cdot)$ of (\ref{K2}) satisfy inequalities
\be \label{P2}
c_1\sqrt{T} \ \le a(T) \ \le C_1\sqrt{T} \ , c_1 \ \le \sqrt{T}b(T) \ \le C_1 \quad {\rm for \ } T>0 \ ,
\ee
where $c_1,C_1>0$ are constants.  Assume now that $c_\ve(\cdot,0)$ has support in the interval $[0,y_\infty]$. It follows from   (\ref{J2}), (\ref{P2}) and Lemma A.1 there are constants $C_2,T_2>0$ such that
\be \label{Q2}
 E[X_{\ve,y,T} \ | \ X_{\ve,y,T}>0] \ \le \  C_2 \quad {\rm for \ } T\ge T_2, \ y\in[0,y_\infty] \ .
\ee
From (\ref{C1}), (\ref{I2})  we conclude that $\lim_{T\ra\infty}\La_\ve(T)<\infty$, a contradiction. 

Next  we show that  $\lim_{T\ra\infty}m_{2,A}(T)/m_{1,A}(T)=\infty$. Note from (\ref{O2}) this implies $m_{1,A}(T)$ cannot approach $\infty$ too rapidly, which in turn implies a lower bound on the rate of growth of $\La_\ve(T)$.   To see this we assume for contradiction that  $\lim_{T\ra\infty}m_{2,A}(T)/m_{1,A}(T)= m_\infty<\infty$, whence (\ref{O2}) and the inequality $m_{1,A}(\cdot)\ge 1$ implies that $\lim_{T\ra\infty}\sig^2_A(T)/m_{1,A}(T)^2\le m_\infty<\infty$. It follows that the functions $a(\cdot), b(\cdot)$ of (\ref{K2}) satisfy the inequality (\ref{L2}).  Hence $z_{y,T}$ is uniformly bounded for $y\in[0,y_\infty]$ as $T\ra\infty$.  We conclude from 
(\ref{J2})-(\ref{L2}) there are positive constants $c_3,T_3$ such that
\be \label{R2}
 E[X_{\ve,y,T} \ | \ X_{\ve,y,T}>0] \ \ge \  c_3m_{1,A}(T) \quad {\rm for \ } T\ge T_3, \ y\in[0,y_\infty] \ .
\ee
It follows from (\ref{C1}), (\ref{I2}), (\ref{R2}) that
\be \label{S2}
\La_\ve(T) \ \ge \ c_3m_{1,A}(T) \quad \ {\rm for \ } T\ge T_3 \ . 
\ee
From (\ref{O2}), (\ref{S2}) there is a constant $C_4>0$ such that
\be \label{T2}
\log m_{1,A}(T) \ = \ \int_0^T\frac{dt}{\La_\ve(t)} \ \le \ C_4 \frac{m_{2,A}(T)}{m_{1,A}(T)}  \quad {\rm for \ } T\ge T_3 \ .
\ee
Since we have already established that  $\lim_{T\ra\infty} m_{1,A}(T)=\infty$, we conclude from (\ref{T2}) that
$m_\infty=\infty$, contradicting our original assumption. 

It follows from (\ref{O2}), and the fact that $m_{1,A}(\cdot)\ge1$, $\lim_{T\ra\infty} \frac{m_{2,A}(T)}{m_{1,A}(T)}=\infty$ that $a(T),b(T)$ in (\ref{K2}) satisfy $\lim_{T\ra\infty} a(T)/b(T)\ra\infty$ and $\lim_{T\ra\infty} a(T)=\infty$.
We see then  from  (\ref{J2}) and Lemma A.1 there are constants $C_3,T_3>0$ such that
\be \label{U2}
 E[X_{\ve,y,T} \ | \ X_{\ve,y,T}>0] \ \le \  C_3\frac{\sig_A^2(T)}{m_{2,A}(T)} \quad {\rm for \ } T\ge T_3, \ y\in[0,y_\infty] \ .
\ee
Now (\ref{I2}), (\ref{U2}) imply that $\La_\ve(T)\le  C_3\sig_A^2(T)/m_{2,A}(T)$, whence we conclude that $\lim_{T\ra\infty} \sig_A^2(T)/m_{2,A}(T)=\infty$.  From (\ref{O2}) we see  that 
$ \sig_A^2(T)/m_{2,A}(T)\le m_{1,A}(T)$, whence (\ref{U2}) implies that $\La_\ve(T)\le C_3m_{1,A}(T)$.  We have then from (\ref{O2}) there is a constant $C_4$ such that
\be \label{V2}
\frac{\sig^2_A(T)}{m_{1,A}(T)^2}  \ \le \ C_4\int_0^T \frac{dt}{m_{1,A}(t)\La_\ve(t)} \ = \ C_4\left[1-\frac{1}{m_{1,A}(T)}\right] \ .
\ee
\end{proof}
We recall from \cite{cdw} some functions associated with positive random variables $X$ with finite mean $\langle X\rangle<\infty$. Let us assume that $X$ has integrable pdf proportional to a function $c_X:(0,\infty)\ra\mathbb{R}^+$. We define functions $w_X, \ h_X$ with domain $(0,\infty)$ by 
\be \label{W2}
w_X(x) \ = \ \int_x^\infty c_X(x') \ dx' \ ,  \quad h_X(x) \ = \ \int_x^\infty w_X(x') \ dx' \ .
\ee
The {\it beta} function $\beta_X$ associated with $X$ also has domain $(0,\infty)$ and is defined by
\be \label{X2}
\beta_X(x) \ = \ \frac{c_X(x)h_X(x)}{w_X(x)^2} \ .
\ee
It is easy to see from (\ref{W2}), (\ref{X2})  that $E[X-x \ |  \ X>x]=h_X(x)/w_X(x)$ and 
\be \label{Y2}
-\frac{d}{dx} E[X-x \ | \ X>x] \ = \  1-\beta_X(x) \ . 
\ee
Furthermore, if we define $v_X(x)=E[X-x \ |  \ X>x]^{-1}$ then
\be \label{Z2}
 h_X(x) \ = \ h_X(0)\exp\left[-\int_0^x v_X(x') \ dx'\right] \  . 
\ee
\begin{proposition}
Assume $c_\ve(x,0), \ x\in\R,$ has compact support, satisfies the conditions of Lemma 2.1 and $c_\ve(\cdot,t), \ t>0$, is the solution to (\ref{A1}), (\ref{C1}). Let $X_{\ve,t}$ be the nonnegative random variable with pdf proportional to $c_\ve(x,t), \ x>0$, and $\mathcal{X}$ be the exponential variable with mean $1$.  Then 
 \be \label{AA2}
 \frac{X_{\ve,t}}{\langle X_{\ve,t}\rangle} \xrightarrow{D} \mathcal{X} \ , \quad  \ {\rm as \ \ } t\ra\infty \ .
 \ee
  In addition one has
 \be \label{AB2}
 \lim_{t\ra\infty}\|\beta_{X_{\ve,t}}(\cdot)-1\|_\infty \ = \ 0  .
  \ee
\end{proposition}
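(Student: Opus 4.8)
The plan is to use the explicit Gaussian representation $(\ref{C2})$--$(\ref{D2})$ of the whole line solution, together with the asymptotic relations of the preceding Lemma, to exhibit the rescaled density of $X_{\ve,T}$ as a mild perturbation of a Gaussian density conditioned to lie far out in its right tail; such a density has beta function uniformly close to $1$, which gives $(\ref{AB2})$, and $(\ref{AA2})$ then follows. First I would rescale. Put $A(\cdot)\equiv 1/\La_\ve(\cdot)$, $s(T)=\sqrt{\ve}\,\sig_A(T)$, $\ze_0(T)=m_{2,A}(T)/s(T)$, $\kappa_T=m_{1,A}(T)/s(T)$, and let $[0,y_*]$ be the smallest interval containing $\mathrm{supp}\,c_\ve(\cdot,0)$; for $\xi>0$ set $\tilde c_T(\xi)=c_\ve(s(T)\xi,T)$. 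Expanding the exponent in $(\ref{D2})$ and separating variables gives
\[
\tilde c_T(\xi)=\frac{e^{-(\xi+\ze_0(T))^2/2}}{\sqrt{2\pi}\,s(T)}\,M_T(\xi+\ze_0(T)),\qquad M_T(u)=\int_0^{y_*}e^{u\kappa_T y-\kappa_T^2 y^2/2}\,c_\ve(y,0)\,dy .
\]
By $(\ref{O2})$ the quantity $\sig_A^2(T)/m_{1,A}(T)^2=\int_0^T m_{1,A}(t)^{-2}\,dt$ increases to a limit $\ell\in(0,\infty)$ (finite by the preceding Lemma), so $\kappa_T\to(\ve\ell)^{-1/2}$ stays bounded and bounded away from $0$, while $\ze_0(T)\to\infty$ (using also $m_{2,A}/m_{1,A}\to\infty$). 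Since the beta function and the law of $X/\av{X}$ are scale invariant, it suffices to establish $(\ref{AA2})$--$(\ref{AB2})$ for the variable $\tilde X_T$ with density proportional to $\tilde c_T$ on $(0,\infty)$.

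Next I would read off the structure of $\tilde c_T$. Writing $M_T(u)=\int e^{u\mu}\,d\rho_T(\mu)$, with $\rho_T$ the image of the finite measure $e^{-\kappa_T^2 y^2/2}c_\ve(y,0)\,dy$ under $y\mapsto\kappa_T y$ and $\mu_T^*=\kappa_T y_*$ the right endpoint of $\mathrm{supp}\,\rho_T$, one has $(\log M_T)'(u)=E_{\rho_{T,u}}[\mu]\in[0,\mu_T^*]$ and $(\log M_T)''(u)=\mathrm{Var}_{\rho_{T,u}}(\mu)\ge0$, where $\rho_{T,u}\propto e^{u\mu}d\rho_T(\mu)$. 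Hence $a_T(\xi):=-(\log\tilde c_T)'(\xi)=\ze_0(T)+\xi-(\log M_T)'(\xi+\ze_0(T))$ satisfies $a_T(\xi)\ge\ze_0(T)-\mu_T^*$ and $a_T'(\xi)=1-\mathrm{Var}_{\rho_{T,\xi+\ze_0(T)}}(\mu)\in[1-\epsilon_T,1]$, with $\epsilon_T:=\sup_{u\ge\ze_0(T)}\mathrm{Var}_{\rho_{T,u}}(\mu)$. One checks $\epsilon_T\to0$ by a short Laplace estimate: for every $\del>0$, $\mathrm{Var}_{\rho_{T,u}}(\mu)\le\del^2+(\mu_T^*)^2\rho_{T,u}([0,\mu_T^*-\del))$, and since $y_*=\sup\mathrm{supp}\,c_\ve(\cdot,0)$ forces $\rho_T([\mu_T^*-\del/2,\mu_T^*])$ to be bounded below uniformly in $T$, the last factor is $\le C_\del e^{-u\del/2}$; taking $u=\ze_0(T)\to\infty$ finishes it. Thus, for $T$ large, $\tilde c_T$ is log-concave on $(0,\infty)$, $a_T$ is increasing with slope in $[1-\epsilon_T,1]$, and $a_T(0)\ge\ze_0(T)/2$.

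Then, using $\tilde c_T(\xi+t)=\tilde c_T(\xi)\exp[-\int_0^t a_T(\xi+r)\,dr]$ and $\int_0^t a_T(\xi+r)\,dr=a_T(\xi)\,t+\tfrac{t^2}{2}(1+O(\epsilon_T))$, Laplace's method (effective integration scale $t$ of order $a_T(\xi)^{-1}$, hence $\le 2\ze_0(T)^{-1}$; the range $t\ge1$ super-exponentially negligible; and $R(s)=\int_0^\infty e^{-st-t^2/2}\,dt=s^{-1}(1+O(s^{-2}))$) yields, uniformly in $\xi>0$ and with $\eta_T:=\epsilon_T+\ze_0(T)^{-2}\to0$,
\[
\frac{w_{\tilde X_T}(\xi)}{\tilde c_T(\xi)}=\frac{1+O(\eta_T)}{a_T(\xi)},\qquad \frac{h_{\tilde X_T}(\xi)}{\tilde c_T(\xi)}=\frac{1+O(\eta_T)}{a_T(\xi)^2},
\]
where $w_{\tilde X_T},h_{\tilde X_T}$ are as in $(\ref{W2})$. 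Hence $\beta_{\tilde X_T}(\xi)=\tilde c_T(\xi)h_{\tilde X_T}(\xi)/w_{\tilde X_T}(\xi)^2=1+O(\eta_T)$ uniformly in $\xi>0$ by $(\ref{X2})$, which is $(\ref{AB2})$. For $(\ref{AA2})$: by $(\ref{Y2})$, $1/v_{\tilde X_T}(\xi)=\av{\tilde X_T}+\int_0^\xi(\beta_{\tilde X_T}(x)-1)\,dx$; by $(\ref{Z2})$ and $w_{\tilde X_T}=-h_{\tilde X_T}'$, $P(\tilde X_T>\xi)=\dfrac{v_{\tilde X_T}(\xi)}{v_{\tilde X_T}(0)}\exp[-\int_0^\xi v_{\tilde X_T}(x)\,dx]$; inserting $\xi=\av{\tilde X_T}\ga$ and using $\|\beta_{\tilde X_T}-1\|_\infty\to0$ together with dominated convergence gives $P(\tilde X_T/\av{\tilde X_T}>\ga)\to e^{-\ga}$ for every $\ga>0$, i.e. $X_{\ve,T}/\av{X_{\ve,T}}\xrightarrow{D}\mathcal{X}$.

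The step I expect to be the main obstacle is making the estimates of the last two paragraphs genuinely uniform in $\xi\in(0,\infty)$, in particular in the far tail $\xi\to\infty$, and controlling the error $\epsilon_T$: one must verify that the behaviour of $c_\ve(\cdot,0)$ near the right end of its support cannot spoil the uniform concentration of the tilted measures $\rho_{T,u}$ as the tilt $u\ge\ze_0(T)$ grows and $\kappa_T$ varies, and then carry the resulting $O(\epsilon_T)$ and $O(\ze_0(T)^{-2})$ errors cleanly through the Laplace-type integrals defining $w_{\tilde X_T}$ and $h_{\tilde X_T}$.
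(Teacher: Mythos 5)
Your argument is correct, but it is organized rather differently from the paper's. The paper first proves the distributional limit $(\ref{AA2})$ directly by decomposing $X_{\ve,T}$ as a mixture over $y$ of the conditioned Gaussians $\tilde X_{\ve,y,T}$, invoking Lemmas~A.1--A.2 for the uniform convergence $\tilde X_{\ve,y,T}/\langle\tilde X_{\ve,y,T}\rangle\to\mathcal X$ and of the conditional means, and then, separately, proves $(\ref{AB2})$ by writing $\beta_\ve(x,T)=A_\ve C_\ve/B_\ve^2$ with the explicit double integrals $(\ref{AG2})$--$(\ref{AH2})$ and expanding the inner $x'$--integral via $(\ref{AI2})$--$(\ref{AJ2})$. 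You instead prove $(\ref{AB2})$ first and derive $(\ref{AA2})$ from it: rescaling by $s(T)=\sqrt\ve\,\sig_A(T)$ you factor the density into a standard Gaussian weight times the moment generating function $M_T$ of a compactly supported tilted measure $\rho_T$, observe $a_T'=1-\mathrm{Var}_{\rho_{T,u}}(\mu)$ with $\mathrm{Var}_{\rho_{T,u}}\to0$ uniformly for $u\ge\ze_0(T)$ (a concentration-at-the-edge-of-support estimate using $\kappa_T\to(\ve\ell)^{-1/2}$ and $\ze_0(T)\to\infty$, both consequences of Lemma~3.1), and then a one-line Laplace estimate gives $w/c\sim a_T^{-1}$, $h/c\sim a_T^{-2}$ uniformly, hence $\beta\to1$ uniformly; $(\ref{AA2})$ then follows by integrating $(\ref{Y2})$--$(\ref{Z2})$. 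The ingredients overlap substantially (both rely on Lemma~3.1 and a Laplace expansion of the same Gaussian convolution), but your formulation is more conceptual — it exposes the solution as a log-concave density with derivative of $-\log c$ nearly affine of slope $1$ and large intercept, which is the cleanest way to see why both conclusions hold — and it makes $(\ref{AA2})$ a corollary of $(\ref{AB2})$ rather than a parallel computation, at the price of the extra step of verifying $\epsilon_T\to0$ (which the paper's mixture argument avoids by working pointwise in $y$).

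Two minor points worth making explicit if you write this up: the constant $C_\del$ in the bound $\rho_{T,u}\bigl([0,\mu_T^*-\del)\bigr)\le C_\del e^{-u\del/2}$ requires $\kappa_T$ to be confined to a compact subinterval of $(0,\infty)$, which holds only for $T$ beyond some $T_0$ (harmless, but should be said); and the derivation of $(\ref{AA2})$ from $(\ref{AB2})$ tacitly uses that $\langle\tilde X_T\rangle$ stays bounded away from $0$ and $\infty$ (so that $\xi=\langle\tilde X_T\rangle\ga$ ranges over the region where the uniform bound on $\beta-1$ is effective), which follows from your estimate $a_T(0)\asymp\ze_0(T)$ together with $w/c\sim a_T^{-1}$ and the scaling by $s(T)$, but deserves a sentence.
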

\begin{proof}
We have from Lemma 3.1 that the functions $a(\cdot),\  b(\cdot)$ of (\ref{K2}) satisfy $\lim_{T\ra\infty}a(T)=\infty, \ \lim_{T\ra\infty}b(T)/a(T)=0$. Hence $z_{y,T}$ given by (\ref{J2}) satisfies $\lim_{T\ra\infty} z_{y,T}=+\infty$, and the limit is uniform for $y\in[0,y_\infty]$.  Let $\tilde{X}_{\ve,y,T}$ be the random variable $X_{\ve,y,T}$ of (\ref{H2})  conditioned on $X_{\ve,y,T}>0$. It follows  from  Lemma A.2 that
 \be \label{AC2}
 \frac{\tilde{X}_{\ve,y,T}}{\langle \tilde{X}_{\ve,y,T}\rangle} \xrightarrow{D} \mathcal{X} \ \ {\rm as \ \ } T\ra\infty \ .
 \ee
The limit in (\ref{AB2}) is  uniform for $y\in[0,y_\infty]$. We also have from Lemma A.1 that
\be \label{AD2}
 \frac{\langle \tilde{X}_{\ve,y,T}\rangle}{\langle \tilde{X}_{\ve,0,T}\rangle}   \ \ra \ 1 \ \ {\rm as \ \ } T\ra\infty \ ,
\ee
and the limit is uniform for $y\in[0,y_\infty]$. We conclude from (\ref{I2}), (\ref{AD2}) that
\be \label{AE2}
 \frac{\langle \tilde{X}_{\ve,y,T}\rangle}{\langle X_{\ve,T} \rangle}   \ \ra \ 1 \ \ {\rm as \ \ } T\ra\infty \ ,
\ee
and the limit is uniform for $y\in[0,y_\infty]$. We have from (\ref{C2}), (\ref{H2}) that
\begin{multline} \label{AF2}
P\left(\frac{X_{\ve,T}}{\langle X_{\ve,T} \rangle}> x\right) \\
 =  \  \int_0^{y_\infty} P\left(\frac{X_{\ve,y,T}}{\langle X_{\ve,T} \rangle}> x\right) c_\ve(y,0) \ dy  \Bigg /
 \int_0^{y_\infty} P(X_{\ve,y,T}>0) c_\ve(y,0) \ dy  \\
 =  \ \int_0^{y_\infty} P\left(\frac{\tilde{X}_{\ve,y,T}}{\langle X_{\ve,T} \rangle}> x\right) P(X_{\ve,y,T}>0)c_\ve(y,0) \ dy  \Bigg /\int_0^{y_\infty} P(X_{\ve,y,T}>0) c_\ve(y,0) \ dy  \ .
\end{multline}
 The convergence in distribution (\ref{AA2}) then follows from (\ref{AC2})-(\ref{AF2}).
 
The limit of the beta function in (\ref{AB2}) already follows from Lemma 2.1 of \cite{cdw} in the case when $c_\ve(\cdot,0)$ is a point distribution.  For $c_\ve(\cdot,0)$ supported in an interval $[0,y_\infty]$ we write $\beta_\ve(x,T)=A_\ve(x,T)C_\ve(x,T)/B_\ve(x,T)^2$. The function $A_\ve$ is given by the formula
\begin{multline} \label{AG2}
A_\ve(x,T) \ = \ \int_0^{y_\infty} \exp\left[\frac{b(T)xy}{\ve\sig_A(T)}\right]\tilde{c}_\ve(y,0) \ dy \ ,  \\
{\rm where \ } \tilde{c}_\ve(y,0) \ = \  \exp\left[\frac{a(T)b(T)y}{\ve}-\frac{b(T)^2y^2}{2\ve}\right]c_\ve(y,0) \ ,
\end{multline}
and $a(\cdot),b(\cdot)$ are given by (\ref{J2}), (\ref{K2}). 
The functions $B_\ve, C_\ve$ are given by the formulae
\begin{multline} \label{AH2}
B_\ve(x,T) \ = \ \int_0^{y_\infty}dy \int_0^\infty dx'  \  \exp\left[\frac{b(T)(x+x')y}{\ve\sig_A(T)}-\frac{a(T)x'}{\ve\sig_A(T)}-\frac{x'(2x+x')}{2\ve\sig^2_A(T)}\right]\tilde{c}_\ve(y,0) \ , \\
C_\ve(x,T) \ = \ \int_0^{y_\infty}dy \int_0^\infty dx'  \ x' \exp\left[\frac{b(T)(x+x')y}{\ve\sig_A(T)}-\frac{a(T)x'}{\ve\sig_A(T)}-\frac{x'(2x+x')}{2\ve\sig^2_A(T)}\right]\tilde{c}_\ve(y,0) \ .
\end{multline}
Observe now that for $\del>0$ small one has
\begin{multline} \label{AI2}
\int_0^\infty e^{-z-\del z^2/2} \ dz \ = \ 1-\del +O(\del^2) \ , \\
\int_0^\infty ze^{-z-\del z^2/2} \ dz \ = \ 1-3\del +O(\del^2) \ .
\end{multline}
From (\ref{AH2}), (\ref{AI2}) we see there exists $T_0\ge 1$ sufficiently large such that for $T\ge T_0$, 
\begin{multline} \label{AJ2}
B_\ve(x,T) \ = \ [1-\del_\ve(x,T)]\int_0^{y_\infty} \left[\frac{a(T)-b(T)y}{\ve\sig_A(T)}+\frac{x}{\ve\sig_A^2(T)}\right]^{-1} \exp\left[\frac{b(T)xy}{\ve\sig_A(T)}\right]\tilde{c}_\ve(y,0) \ dy \ , \\
C_\ve(x,T) \ = \ [1-3\del_\ve(x,T)]\int_0^{y_\infty} \left[\frac{a(T)-b(T)y}{\ve\sig_A(T)}+\frac{x}{\ve\sig_A^2(T)}\right]^{-2} \exp\left[\frac{b(T)xy}{\ve\sig_A(T)}\right]\tilde{c}_\ve(y,0) \ dy \ .
\end{multline}
with $\|\del_\ve(\cdot,T)\|_\infty\le 2\ve/a(T)^2$.  Using the fact that $\lim_{T\ra\infty}a(T)=\infty, \ \lim_{T\ra\infty}b(T)/a(T)=0,$ we conclude from (\ref{AG2}), (\ref{AJ2}) that  $
\beta_\ve(\cdot,T)$ converges uniformly to $1$ as $T\ra\infty$.  The limit (\ref{AB2}) follows. 
\end{proof}
\begin{proposition}
Assume $c_\ve(x,0), \ x\in\R,$ has compact support, satisfies the conditions of Lemma 3.1 and $c_\ve(\cdot,t), \ t>0$, is the solution to (\ref{A1}), (\ref{C1}). Then $\lim_{t\ra\infty} \frac{d\La_\ve(t)}{dt}=1$. 
\end{proposition}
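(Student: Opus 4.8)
The plan is to start from the coarsening rate formula (\ref{A2}), rewrite it in a form in which each contribution can be handled separately, and then feed in Lemma 3.1 and Proposition 3.1. Multiplying (\ref{A2}) through by $\La_\ve(t)$ and abbreviating $N(t)=\int_0^\infty c_\ve(x,t)\,dx$, we get
\[
\frac{d\La_\ve(t)}{dt} \ = \ \frac{\La_\ve(t)}{N(t)}\left[c_\ve(0,t)+\frac{\ve}{2}\frac{\pa c_\ve(0,t)}{\pa x}\right] \ + \ \frac{\ve}{2}\,\frac{c_\ve(0,t)}{N(t)} \ .
\]
The first observation is that $\La_\ve(t)c_\ve(0,t)/N(t)$ is precisely the beta function $\beta_{X_{\ve,t}}$ evaluated at $x=0$: if $X=X_{\ve,t}$ has pdf $c_\ve(\cdot,t)/N(t)$ then, by (\ref{W2}), $w_X(0)=1$ and $h_X(0)=\langle X\rangle=\La_\ve(t)$, so that $\beta_X(0)=c_X(0)h_X(0)/w_X(0)^2=\La_\ve(t)c_\ve(0,t)/N(t)$. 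By Proposition 3.1 this tends to $1$. It also follows that $c_\ve(0,t)/N(t)=\beta_{X_{\ve,t}}(0)/\La_\ve(t)$, which tends to $0$ by Lemma 3.1 since $\La_\ve(t)\ra\infty$; hence the last term above tends to $0$.

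It remains to show $c_\ve(0,t)+\frac{\ve}{2}\pa_x c_\ve(0,t)=(1+o(1))c_\ve(0,t)$ as $t\ra\infty$, since then the first term equals $(1+o(1))\beta_{X_{\ve,t}}(0)\ra1$ and we are done. For this I would use the Green's function representation (\ref{C2}), (\ref{D2}) together with the identity (\ref{E2}): with $A(\cdot)\equiv1/\La_\ve(\cdot)$ and $[0,y_\infty]$ containing the support of $c_\ve(\cdot,0)$,
\[
c_\ve(0,t)+\frac{\ve}{2}\frac{\pa c_\ve(0,t)}{\pa x} \ = \ \int_0^{y_\infty}\left[1-\frac{m_{2,A}(t)-m_{1,A}(t)y}{2\sig^2_A(t)}\right]G_\ve(0,y,0,t)\,c_\ve(y,0)\,dy \ .
\]
The key step is to show the correction term $(m_{2,A}(t)-m_{1,A}(t)y)/2\sig^2_A(t)$ goes to $0$ uniformly for $y\in[0,y_\infty]$; since $0\le y\le y_\infty$ it suffices to prove $m_{2,A}(t)/\sig^2_A(t)\ra0$ and $m_{1,A}(t)/\sig^2_A(t)\ra0$. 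The first is immediate from the compact-support conclusion $\sig^2_A(t)/m_{2,A}(t)\ra\infty$ of Lemma 3.1; the second follows from the elementary inequality $\sig^2_A(t)\ge m_{2,A}(t)$ (immediate from (\ref{B2}) as $A\ge0$) together with $m_{2,A}(t)/m_{1,A}(t)\ra\infty$ from Lemma 3.1. Pulling the resulting uniform bound out of the integral and using $\int_0^{y_\infty}G_\ve(0,y,0,t)c_\ve(y,0)\,dy=c_\ve(0,t)>0$ then gives $c_\ve(0,t)+\frac{\ve}{2}\pa_x c_\ve(0,t)=(1+o(1))c_\ve(0,t)$, as needed, and $\lim_{t\ra\infty}d\La_\ve(t)/dt=1$ follows.

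I do not anticipate a serious obstacle here: beyond the computations above the proposition is a direct corollary of Lemma 3.1 and Proposition 3.1 once the rate formula is put in the displayed form. The step I would flag as the main (though modest) point of care is twofold. First, the uniform smallness of the correction term for $y$ in the support — this is precisely where the full strength of the compact-support conclusions of Lemma 3.1 is used, rather than merely $\La_\ve(t)\ra\infty$. Second, the identification $\La_\ve(t)c_\ve(0,t)/N(t)=\beta_{X_{\ve,t}}(0)$ together with the fact that the convergence $\|\beta_{X_{\ve,t}}-1\|_\infty\ra0$ in Proposition 3.1 does cover the endpoint $x=0$: this is visible from the formulas (\ref{AG2})--(\ref{AJ2}) appearing in its proof, which are valid at $x=0$, or alternatively from continuity at $0$ of $c_\ve(\cdot,t)$, $w_{X_{\ve,t}}$ and $h_{X_{\ve,t}}$ together with $w_{X_{\ve,t}}(0)=1>0$.
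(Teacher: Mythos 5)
Your proof is correct and follows essentially the same route as the paper's: both start from the rate formula (\ref{A2}), use the identity (\ref{E2}) together with the limits from Lemma 3.1 to show the $\partial_x c_\ve(0,t)$ contribution and the $\ve/(2\La_\ve(t))$ correction are $o(1)$ relative to $c_\ve(0,t)$, identify the surviving term with $\beta_{X_{\ve,t}}(0)$ via (\ref{X2}), and then invoke (\ref{AB2}) from Proposition 3.1. The paper's proof of inequality (\ref{AK2}) simply says it follows ``from (\ref{E2}) and Lemma 3.1''; your verification via $m_{2,A}(t)/\sigma_A^2(t)\to 0$ and $m_{1,A}(t)/\sigma_A^2(t)\to 0$ (the latter using $\sigma_A^2\ge m_{2,A}$ and $m_{2,A}/m_{1,A}\to\infty$) supplies exactly the missing details of that step.
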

\begin{proof} We see from (\ref{E2}) and Lemma 3.1 that for any $\del>0$ there exists $T_\del\ge 1$ such that
\be \label{AK2}
\frac{\ve}{2}\left| \frac{\pa c_\ve(0,t)}{\pa x}\right| \ \le \ \del c_\ve(0,t) \quad {\rm for \ } t\ge T_\del \ .
\ee
If we also choose $T_\del$ sufficiently large so that $\ve/2\La_\ve(t)\le \del$ for $t\ge T_\del$, then we conclude from (\ref{A2}), (\ref{X2}), (\ref{AK2}) that
\be \label{AL2}
[1-2\del]\beta_{X_{\ve,t}}(0) \ \le \ \frac{d\La_\ve(t)}{dt} \ \le [1+2\del]\beta_{X_{\ve,t}}(0) \quad {\rm \ } t\ge T_\del \ .
\ee
The result follows from (\ref{AB2}) of Proposition 3.1 and (\ref{AL2}). 
\end{proof}
\begin{rem}
Observe that the rate of coarsening for the $\ve=0$ CP model is given by
\be \label{AM2}
\frac{d}{dt}\langle X_{0,t}\rangle \ = \  \beta_{X_{0,t}}(0) \ .
\ee 
Hence the formula (\ref{A2}) for the whole line $\ve>0$ model can be considered as a linear combination of the $\ve=0$ formula (\ref{AM2}) and  the $\ve>0$ formula (\ref{D1})   for the  Dirichlet boundary condition case.  
\end{rem}
\begin{rem}
It follows from (\ref{AJ2}) that
\be \label{AN2}
\lim_{T\ra\infty} \frac{\La_\ve(T)a(T)}{\ve\sig_A(T)} \ = \ \lim_{T\ra\infty} \frac{\La_\ve(T)m_{2,A}(T)}{\ve\sig^2_A(T)} \ = \ 1  \ , \quad{\rm where \ } A(\cdot)\equiv \frac{1}{\La_\ve(\cdot)} \ .
\ee
Equation (\ref{AN2}) suggests that $\La_\ve(T)/T$ converges to $1$ at a logarithmic rate. To see this note from Lemma 2.1 that 
\be \label{AO2}
\lim_{T\ra\infty} \frac{\sig_A^2(T)}{m_{1,A}(T)^2} \  = C_\ve \ < \ \infty \ .
\ee 
We conclude from (\ref{O2}), (\ref{AN2}), (\ref{AO2}) and Proposition 3.2 that
\be \label{AP2}
\lim_{T\ra\infty} \frac{T}{m_{1,A}(T)}\int_0^T\frac{ds}{m_{1,A}(s)} \ = \ \ve C_\ve \ .
\ee
If we choose $m_{1,A}(T)\simeq T(\log T)^{1/2}$ for large $T$ then the limit on the LHS of (\ref{AP2}) is  $2$ and
\be \label{AQ2}
\frac{dm_{1,A}(T)}{dT} \ \simeq \ \frac{m_{1,A}(T)}{T}\left[1+\frac{1}{2\log T}\right] \ .
\ee
From (\ref{B2}) we have that
\be \label{AR2}
\frac{dm_{1,A}(T)}{dT}  \ = \ A(T)m_{1,A}(T) \ = \ \frac{1}{\La_\ve(T)}m_{1,A}(T) \ .
\ee
Comparing (\ref{AQ2}), (\ref{AR2}) we conclude that
\be \label{AS2}
\frac{\La_\ve(T)}{T} \ \simeq \ 1-\frac{1}{2\log T}  \quad {\rm as \ } T\ra\infty \ .
\ee

\end{rem}

\section{The half line problem}
We consider now the half line problem (\ref{A1}), (\ref{B1}) with zero Dirichlet boundary condition. Our approach will be to regard this problem as a perturbation from the whole line problem studied in $\S2$. Hence it is helpful to regard the half line problem as (\ref{A1}), (\ref{C1}) rather than the equivalent (\ref{A1}), (\ref{B1}). In comparing the half line problem to the full line problem, the main difficulty is in finding estimates on the ratio of the half line Dirichlet Green's function $G_{\ve,D}$ for (\ref{A1})  to the whole line Green's function (\ref{D2}). We write the ratio as
\be \label{A3}
K_{\ve,D}(x,y,t,T) \ = \ G_{\ve,D}(x,y,t,T)\big/ G_\ve(x,y,t,T) \ , \quad x>0, \ 0<t<T \ .
\ee
In \cite{cd} we proved a log concavity property for this ratio (see Theorem 1.1 of \cite{cd}). We shall use this fact to show that the conclusions of Lemma 3.1 extend to the half line problem. 
\begin{lem}
Let $c_\ve(x,t), \ x,t>0,$ be the solution to (\ref{A1}), (\ref{B1}) with zero Dirichlet boundary condition and non-negative  initial data $c_\ve(\cdot,0)$ satisfying (\ref{F2}).  Then the function $\La_\ve(\cdot)$ of (\ref{C1}) is increasing and $\lim_{T\ra\infty}\La_\ve(T)=\infty$. If in addition $c_\ve(\cdot,0)$ has compact support, then $\lim_{T\ra\infty}m_{1,A}(T)=\infty$, $\lim_{T\ra\infty} \frac{m_{2,A}(T)}{m_{1,A}(T)}=\infty$,  $\lim_{T\ra\infty} \frac{\sig^2_A(T)}{m_{2,A}(T)}=\infty$ and $\lim_{T\ra\infty} \frac{\sig^2_A(T)}{m_{1,A}(T)^2}<\infty$ , where $A(\cdot)=1/\La_\ve(\cdot)$. 
\end{lem}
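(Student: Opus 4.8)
A direct comparison with the whole-line problem of $\S3$ is not available, since the parameter $\La_\ve(\cdot)$ is coupled to the solution and differs between the two problems; instead the plan is to run the proof of Lemma 3.1 with the whole-line Green's function $G_\ve$ of (\ref{D2}) replaced throughout by the Dirichlet Green's function $G_{\ve,D}$, controlling the discrepancy through the ratio $K_{\ve,D}$ of (\ref{A3}). Monotonicity of $\La_\ve(\cdot)$ is immediate from (\ref{H1}) and the maximum principle, since $c_\ve(\cdot,t)\ge0$ with $c_\ve(0,t)=0$ forces $\pa c_\ve(0,t)/\pa x\ge0$. For the asymptotic statements, fix $A(\cdot)\equiv1/\La_\ve(\cdot)$ and observe that the linear diffusion $dX_s=(X_s/\La_\ve(s)-1)\,ds+\sqrt{\ve}\,dW_s$ started from $y>0$ has whole-line transition density $G_\ve(x,y,0,s)$; writing $\tau_0$ for its first passage time to the origin, one has $G_{\ve,D}(x,y,0,T)=K_{\ve,D}(x,y,0,T)\,G_\ve(x,y,0,T)$ with $K_{\ve,D}(x,y,0,T)=P_y(\tau_0>T\mid X_T=x)\in[0,1]$, and $\int_0^\infty G_{\ve,D}(x,y,0,T)\,dx=P_y(\tau_0>T)$. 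Substituting $c_\ve(x,T)=\int_0^\infty G_{\ve,D}(x,y,0,T)\,c_\ve(y,0)\,dy$ into (\ref{C1}) then exhibits $\La_\ve(T)$ as a weighted average over $y$ in the support $[0,y_\infty]$ of $c_\ve(\cdot,0)$ of the survival-conditioned means $E_y[X_{\ve,y,T}\mid\tau_0>T]$; this is the exact analogue of (\ref{I2})--(\ref{J2}), except that the law of $X_{\ve,y,T}$ on $(0,\infty)$ is now reweighted by the factor $K_{\ve,D}(\cdot,y,0,T)$.

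Given this representation, every estimate in the proof of Lemma 3.1 carries over once the survival-conditioned mean is compared, two-sidedly and uniformly in $y$, with the positivity-conditioned mean $E_y[X_{\ve,y,T}\mid X_{\ve,y,T}>0]$ of (\ref{J2}). For the lower comparison, $x\mapsto K_{\ve,D}(x,y,0,T)$ is non-decreasing: by the log-concavity property of Theorem 1.1 of \cite{cd} the map $x\mapsto\log K_{\ve,D}(x,y,0,T)$ is concave, and a concave function on $(0,\infty)$ possessing a finite limit at $+\infty$ cannot be decreasing anywhere, while here $K_{\ve,D}(x,y,0,T)\to1$ as $x\to\infty$ since the Dirichlet kernel agrees with the whole-line kernel far from the boundary. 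Chebyshev's correlation inequality then yields $E_y[X_{\ve,y,T}\mid\tau_0>T]\ge E_y[X_{\ve,y,T}\mid X_{\ve,y,T}>0]$, which feeds directly into (\ref{M2})--(\ref{N2}) and (\ref{R2})--(\ref{S2}). For the upper comparison one needs a bound $K_{\ve,D}(x,y,0,T)\ge c_\ve>0$ on the range of $x$ carrying the bulk of the conditional law of $X_{\ve,y,T}$; since $K_{\ve,D}$ degenerates only inside the $O(\ve)$ boundary layer and, in each of the regimes arising in the proof of Lemma 3.1, that bulk sits at a scale $\gg\ve$ once $T$ is large, the further estimates on $K_{\ve,D}$ from \cite{cd} give $E_y[X_{\ve,y,T}\mid\tau_0>T]\le C_\ve\,E_y[X_{\ve,y,T}\mid X_{\ve,y,T}>0]$, so that (\ref{Q2}) and (\ref{U2}) transfer with adjusted constants. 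With both comparisons available, the contradiction arguments yielding $\lim_{T\to\infty}\La_\ve(T)=\infty$, $\lim_{T\to\infty}m_{1,A}(T)=\infty$ and $\lim_{T\to\infty}m_{2,A}(T)/m_{1,A}(T)=\infty$, and the ensuing deductions of $\lim_{T\to\infty}\sig^2_A(T)/m_{2,A}(T)=\infty$ and $\lim_{T\to\infty}\sig^2_A(T)/m_{1,A}(T)^2<\infty$, are repeated verbatim, using the identities (\ref{O2}) and the estimate (\ref{V2}) unchanged.

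The main obstacle is the upper comparison, i.e.\ the lower bound on $K_{\ve,D}$ away from the boundary layer, uniform in $T$: the location and spread of the bulk of the conditional law of $X_{\ve,y,T}$ change with the regime (it is of order $\sqrt{\ve}\,\sig_A(T)$ when $z_{y,T}$ of (\ref{J2}) stays bounded, but of order $m_{2,A}(T)$ when $z_{y,T}\to\infty$), so one must verify in every case that this scale dominates $\ve$ for large $T$ and invoke the matching quantitative estimate on $K_{\ve,D}$ from \cite{cd}. Once the two comparisons are in hand, no idea beyond the proof of Lemma 3.1 is required.
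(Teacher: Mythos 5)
Your reduction of the half‑line problem to the whole‑line estimates via the Green's function ratio $K_{\ve,D}$ is exactly the paper's strategy, and the lower comparison via the Harris/FKG correlation inequality (using only that $x\mapsto K_{\ve,D}(x,y,0,T)$ is increasing) is a legitimate substitute for the paper's Lemma A.3 inequality (\ref{Q8}). But the upper comparison, which is the genuinely difficult half of the argument, has a gap.

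You propose to get the upper bound $E_y[X_{\ve,y,T}\mid\tau_0>T]\le C\,E_y[X_{\ve,y,T}\mid X_{\ve,y,T}>0]$ from a pointwise lower bound $K_{\ve,D}\ge c_\ve>0$ on the bulk of the positivity‑conditioned law. This is not available uniformly in $T$ in the regime that matters. Consider the contradiction hypothesis used to prove $m_{1,A}(T)\to\infty$, namely $m_{1,A}(\infty)<\infty$. Then $m_{2,A}(T)\sim T$, $\sig_A^2(T)\sim T$, $z_{y,T}\sim\sqrt T$, and the positivity‑conditioned mean is of order $\ve\sig_A(T)/a(T)$. At that scale the best lower bound supplied by \cite{cd} (Proposition 3.3/4.1, quoted here as (\ref{Z4}), (\ref{CC4})) gives $q_\ve(x,y,T)\gtrsim \ve\, m_{1,A}(T)y/\bigl(m_{2,A}(T)-m_{1,A}(T)y\bigr)\sim\ve y/T\to 0$, hence $K_{\ve,D}=1-e^{-q_\ve/\ve}\to0$ uniformly on the bulk of the conditional law — not just inside the $O(\ve)$ boundary layer. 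The behaviour is not pathological: in this hypothetical regime the drift $\la(x,y,s)$ tends to $0$ as $s\to\infty$, so the probability of avoiding the origin degenerates at every fixed scale.

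The reason the paper's argument survives this degeneration is that what must be controlled is not the size of $K_{\ve,D}$ but its growth rate in $x$. If $K_{\ve,D}$ were small on the bulk but increased very steeply toward $1$ past it, the $K_{\ve,D}$‑reweighted mean could blow up; if instead $q_\ve=-\log(1-K_{\ve,D})$ is concave in $x$ (the property from (1.17) of \cite{cd}, which is \emph{not} the same as log‑concavity of $K_{\ve,D}$ itself), then once $q_\ve(x_0)\le1$ at a quantile $x_0$ it is at most linear beyond $x_0$, and Lemma A.3, inequality (\ref{R8}), then bounds the reweighted mean by a universal constant times $m(z)$. That concavity is essential and cannot be replaced by a positivity bound on $K_{\ve,D}$. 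Relatedly, your derivation of monotonicity of $x\mapsto K_{\ve,D}$ from concavity of $\log K_{\ve,D}$ invokes a property the paper does not use (and whose validity I would not take for granted); monotonicity is quoted directly from Proposition 3.2 of \cite{cd} and can also be seen from the maximum principle, with no log‑concavity needed.

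So: replace the upper‑comparison step by the concavity of $q_\ve(\cdot,y,T)$ and the quantitative consequence (\ref{R8}) of Lemma A.3, and the rest of your outline goes through as in Lemma 3.1.
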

\begin{proof}
For $T>0$ we define the function $u_\ve$ by
\be \label{B3}
u_\ve(y,t,T) \ = \ \int_0^\infty G_{\ve,D}(x,y,t,T) \ dx \ , \quad  y>0, \ t<T \ .
\ee
In place of (\ref{I2})  we have  the formulas
\be \label{C3}
\int_0^\infty c_\ve(x,T) \ dx \ = \ \int_0^\infty u_\ve(y,0,T) \ c_\ve(y,0) \ dy \ ,
\ee
 and
 \be \label{D3}
\int_0^\infty xc_\ve(x,T) \ dx \ = \ \int_0^\infty E[X_{\ve,y,T} ]u_\ve(y,0,T)c_\ve(y,0) \ dy \ , 
\ee
where  $X_{\ve,y,T}$ is the positive random variable with pdf proportional to the function $x\ra G_{\ve,D}(x,y,0,T)$. 
Since the function $x\ra G_\ve(x,y,0,T)$ of (\ref{D2}) is the pdf of the random variable (\ref{H2}) we have that
\begin{multline} \label{E3}
E[X_{\ve,y,T} ] \ = \\
 \sqrt{\ve}\sig_A(T)\frac{E[K_{\ve,D}( \sqrt{\ve}\sig_A(T)\{Z-z_{y,T}/\sqrt{\ve}\},y,0,T)\{Z-z_{y,T}/\sqrt{\ve}\} \ | \ Z>z_{y,T}/\sqrt{\ve}]}
 {E[K_{\ve,D}( \sqrt{\ve}\sig_A(T)\{Z-z_{y,T}/\sqrt{\ve}\},y,0,T) \ | \ Z>z_{y,T}/\sqrt{\ve}]} \ ,
\end{multline}
where $Z$ is the standard normal variable and $K_{\ve,D}$ is given by (\ref{A3}). Observe that the function $f(z)= K_{\ve,D}( \sqrt{\ve}\sig_A(T)z,y,0,T), \ z\ge0,$ is continuous  increasing with $f(0)=0$ and $\lim_{z\ra\infty}f(z)=1$ (see Proposition 3.2 of \cite{cd}). 

It follows from (\ref{D1}) that $\La_\ve(\cdot)$ is increasing.  As in Lemma 3.1 we assume for contradiction that $\lim_{T\ra\infty}\La_\ve(T)=\La_\infty<\infty$.  Then (\ref{L2}) holds, and in place of (\ref{M2}) we have from 
(\ref{Q8}) of Lemma A.3 that $E[X_{\ve,y,T}]\ge c\sqrt{\ve}\sig_A(T), \ y>0,T\ge 1,$ for some constant $c>0$. The inequality (\ref{N2}) follows now from (\ref{C3}), (\ref{D3}), whence we conclude $\lim_{T\ra\infty}\La_\ve(T)=\infty$. To prove that $\lim_{T\ra\infty}m_{1,A}(T)=\infty$ we need to use the concavity of the function $z\ra-\log[1-f(z)]$ (see (1.17) of \cite{cd}).  In particular, we need to show that if the functions $a(\cdot),b(\cdot)$ satisfy (\ref{P2}) then the analogue  of (\ref{Q2}) given by
\be \label{F3}
 E[X_{\ve,y,T} ] \ \le \  C_2 \quad {\rm for \ } T\ge T_2, \ y\in(0,y_\infty]  \ , 
\ee
holds. Using the representation (\ref{E3}), we see that (\ref{F3}) follows from (\ref{A8}) of Lemma A.1 and (\ref{R8}) of Lemma A.3. 

The argument that $\lim_{T\ra\infty}m_{2,A}(T)/m_{1,A}(T)=\infty$ follows as in Lemma 2.1 from (\ref{N2}).  To prove that $\lim_{T\ra\infty} \sig^2_A(T)/m_{2,A}(T)=\infty$ we again need to use the concavity of the function $z\ra-\log[1-f(z)]$. Thus (\ref{A8}) and (\ref{R8}) imply the analogue of (\ref{U2}),  
\be \label{G3}
 E[X_{\ve,y,T}] \ \le \  C_3\frac{\sig_A^2(T)}{m_{2,A}(T)} \quad {\rm for \ } T\ge T_3, \ y\in(0,y_\infty] \ ,
\ee
whence we conclude from (\ref{C3}), (\ref{D3})  that $\La_\ve(T)\le C_3\sig_A^2(T)/m_{2,A}(T)$ for $T\ge T_3$.
The remainder of the argument proceeds as in Lemma 3.1. 
\end{proof}
\begin{rem}
Lemma 7.1 of \cite{cdw} gives a proof that $\lim_{T\ra\infty}\La_\ve(T)=\infty$ by using some simple inequalities for solutions to (\ref{A1}), (\ref{B1}) with zero Dirichlet condition. Note that the proof in Lemma 4.1 of $\lim_{T\ra\infty}\La_\ve(T)=\infty$ uses only the elementary property (proved using the maximum principle)  that the function $x\ra K_{\ve,D}(x,y,0,T)$ is increasing.  The proof in Lemma 4.1 of $\lim_{T\ra\infty}m_{1,A}(T)=\infty$ uses the more subtle log concavity property of the function $x\ra K_{\ve,D}(x,y,0,T)$. Evidently the result that $\lim_{T\ra\infty}m_{1,A}(T)=\infty$ is a type of upper bound on the rate of coarsening i.e. the rate at which $\La_\ve(\cdot)$ can increase.  Theorem 1.2 of \cite{cdw} yields a bound $\La_\ve(T)\le CT+\La_\ve(0)$, which implies that  $\lim_{T\ra\infty}m_{1,A}(T)=\infty$. The proof of Theorem 1.2 of \cite{cdw} also proceeds by establishing a log concavity condition on solutions to (\ref{A1}) with zero Dirichlet boundary condition. 
\end{rem}
In  Theorem 1.2 of \cite{cdw} we  obtained an upper bound on the rate of coarsening provided the initial data for (\ref{A1}), (\ref{B1}) satisfies a log concavity condition.
The condition is that the function $h_{X_{\ve,0}}(\cdot)$ defined by (\ref{W2}) for the initial condition random variable $X_{\ve,0}$ is log concave.  It was shown  in $\S7$ of \cite{cdw} that this implies  $h_{X_{\ve,t}}(\cdot)$ is also log concave for $t>0$.  Equivalently, one has that the beta function of the random variables $X_{\ve,t}, \ t>0,$ defined by (\ref{X2}) satisfies the inequality $\beta_{X_{\ve,t}}(\cdot)\le 1$. Next we obtain a bound on the beta function of the variables $X_{\ve,t}, \ t>0,$ when the initial data has compact support.
\begin{lem}
Let $c_\ve(x,t), \ x,t>0,$ be the solution to (\ref{A1}), (\ref{B1}) with zero Dirichlet boundary condition and non-negative  initial data $c_\ve(\cdot,0)$ satisfying (\ref{F2}).
Assume  $c_\ve(\cdot,0)$ has compact support and for $t>0$ denote by $X_{\ve,t}$  the random variable with pdf proportional to $c_\ve(\cdot,t)$. Then for any $T_0>0$ there is a constant $C$ such that $\beta_{X_{\ve,t}}(\cdot)\le C$ for $t\ge T_0$. 
\end{lem}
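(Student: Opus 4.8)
The plan is to regard $c_\ve(\cdot,T)$ as a mixture of the half--line Dirichlet Green's functions $G_{\ve,D}(\cdot,y,0,T)$, with $y$ running over the support of the initial data, and to bound the beta function of the mixture in terms of that of its rightmost component, which is under control thanks to the log concavity of $K_{\ve,D}$ proved in \cite{cd}. After a harmless translation I assume $c_\ve(x,0)=0$ for $x\ge y_\infty$, where $y_\infty$ is the right endpoint of the support of $c_\ve(\cdot,0)$. Writing $G_{\ve,D}=K_{\ve,D}G_\ve$ as in (\ref{A3}) and using the Gaussian formula (\ref{D2}), the computation of Proposition 3.1 that produced (\ref{AG2})--(\ref{AJ2}), but now with the factor $K_{\ve,D}(\,\cdot\,,y,0,T)$ retained inside the inner $x'$--integral and the $y$--integral regrouped, gives, with $\kappa_y(x):=K_{\ve,D}(x,y,0,T)$, $a,b$ as in (\ref{J2})--(\ref{K2}), $\lambda_{x,y}:=x/\ve\sig_A^2(T)+[a(T)-b(T)y]/\ve\sig_A(T)$, $\mu:=1/\ve\sig_A^2(T)$, and
\[
I_j(x,y)\ :=\ \int_0^\infty s^j\,\kappa_y(x+s)\,e^{-\lambda_{x,y}s-\mu s^2/2}\,ds\qquad(j=0,1),
\]
the identity
\[
\beta_{X_{\ve,T}}(x)\ =\ \frac{\av{\kappa_y(x)}_\nu\,\av{I_1(x,y)}_\nu}{\av{I_0(x,y)}_\nu^2}\ ,\qquad
\nu=\nu_{x,T}(dy)\ :=\ \frac{G_\ve(x,y,0,T)\,c_\ve(y,0)\,dy}{\int_0^{y_\infty}G_\ve(x,y',0,T)\,c_\ve(y',0)\,dy'}\ ,
\]
where $\av{\cdot}_\nu$ is the average in $y$ against $\nu$; for a single $y$ the right--hand side equals $\kappa_y(x)I_1(x,y)/I_0(x,y)^2$, the beta function of the variable with pdf $\propto G_{\ve,D}(\cdot,y,0,T)$.

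I would then use three facts. (i) Each component is log concave: $G_{\ve,D}(\cdot,y,0,T)=K_{\ve,D}(\cdot,y,0,T)G_\ve(\cdot,y,0,T)$ is a product of log--concave functions (Theorem 1.1 of \cite{cd}, and the Gaussian), so its $h$--function is log concave, hence its beta function is $\le1$ (\cite{cdw}); equivalently $\kappa_y(x)I_1(x,y)\le I_0(x,y)^2$ for all $x,y,T$. (ii) Monotonicity in $y$: $\kappa_y(x)$ is nondecreasing in $y$ (a source further from the absorbing boundary is more likely to survive; cf.\ \cite{cd}) and $\lambda_{x,y}$ is decreasing in $y$, so $I_0(x,\cdot)$ and $I_1(x,\cdot)$ are nondecreasing on $[0,y_\infty]$; in particular $\av{\kappa_y(x)}_\nu\le\kappa_{y_\infty}(x)$ and $\av{I_1(x,y)}_\nu\le I_1(x,y_\infty)$. (iii) For large $T$ the measure $\nu_{x,T}$ concentrates at $y_\infty$ uniformly in $x\ge0$: its $y$--density is $\propto\exp\!\bigl[m_{1,A}y(x+m_{2,A})/\ve\sig_A^2-m_{1,A}^2y^2/2\ve\sig_A^2\bigr]c_\ve(y,0)$, and the $y$--derivative of the exponent is $\ge m_{1,A}(m_{2,A}-m_{1,A}y_\infty)/\ve\sig_A^2=b(a-by_\infty)/\ve\ge ab/2\ve$ on $[0,y_\infty]$ once $T$ is large, by Lemma 4.1 (which gives $a(T)\to\infty$, $b(T)$ bounded above and below, $a(T)/b(T)\to\infty$); since $a(T)b(T)\to\infty$, for each fixed $\del>0$ one gets $\nu_{x,T}((y_\infty-\del,y_\infty))\to1$ uniformly in $x$. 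A routine estimate — using $1-e^{-\rho t}\ge\rho(1-e^{-t})$ for $\rho\in(0,1)$, continuity of $K_{\ve,D}$ in $y$, and $\lambda_{x,y}\le(1+o(1))\lambda_{x,y_\infty}$ uniformly in $x$ for $y\in(y_\infty-\del,y_\infty)$ (Lemma 4.1 again) — gives $I_0(x,y)\ge c(\del)I_0(x,y_\infty)$ there, so $\av{I_0(x,y)}_\nu\ge c'(\del)I_0(x,y_\infty)$ for $T\ge T_1(\del)$, uniformly in $x$.

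Plugging (i)--(iii) into the identity yields, for $T\ge T_1$ and every $x>0$,
\[
\beta_{X_{\ve,T}}(x)\ \le\ \frac{\kappa_{y_\infty}(x)\,I_1(x,y_\infty)}{c'(\del)^2\,I_0(x,y_\infty)^2}\ =\ c'(\del)^{-2}\,\frac{\kappa_{y_\infty}(x)\,I_1(x,y_\infty)}{I_0(x,y_\infty)^2}\ \le\ c'(\del)^{-2}\ ,
\]
by (i). It is essential that $\kappa_{y_\infty}(x)$ stays coupled to $I_0(x,y_\infty)$ in the last ratio: pulling it out as a prefactor would produce a bound blowing up as $x\to0$, whereas the component beta function is $\le1$ everywhere, including the boundary layer. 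For $t$ in a compact interval $[T_0,T_1]$ the bound is soft: by parabolic regularity and the strong maximum principle \cite{pw}, $c_\ve(\cdot,t)$ is smooth and positive on $(0,\infty)$ and vanishes linearly at $x=0$ (Hopf lemma), so $\beta_{X_{\ve,t}}(x)\to0$ as $x\to0$, while the Gaussian decay of $c_\ve(x,t)$ as $x\to\infty$ forces $\beta_{X_{\ve,t}}(x)\to1$; these limits are locally uniform in $t$, so $\sup_{t\in[T_0,T_1]}\|\beta_{X_{\ve,t}}(\cdot)\|_\infty<\infty$. Combining the two ranges of $t$ finishes the proof.

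I expect step (iii) to be the main obstacle: establishing, uniformly in $x$, the concentration of the ``posterior'' $\nu_{x,T}$ at $y_\infty$ together with the comparability $I_0(x,y)\asymp I_0(x,y_\infty)$ for $y$ near $y_\infty$. This is exactly where the large--$T$ asymptotics of Lemma 4.1 and the quantitative bounds on $K_{\ve,D}$ from \cite{cd} must be combined; the remaining steps are each one--line estimates.
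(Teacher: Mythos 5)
Your identity
\[
\beta_{X_{\ve,T}}(x)=\frac{\av{\kappa_y(x)}_\nu\,\av{I_1(x,y)}_\nu}{\av{I_0(x,y)}_\nu^2}
\]
is correct, and your step (i) is a clean observation: concavity of $q_\ve(\cdot,y,T)$ from \cite{cd} makes $K_{\ve,D}(\cdot,y,0,T)$ log concave, so $G_{\ve,D}(\cdot,y,0,T)=K_{\ve,D}G_\ve$ is log concave as a product, hence $h_{X_{\ve,y,T}}$ is log concave and $\beta_{X_{\ve,y,T}}\le 1$, i.e.\ $\kappa_y(x)I_1(x,y)\le I_0(x,y)^2$. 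This is a genuinely different use of the log concavity of the Green's function ratio than the paper makes. However, the paper's route deliberately avoids the place where your argument stalls. The paper does not compare $I_0(x,y)$ across different $y$'s at all: it writes $\beta_{X_{\ve,T}}(x)=\big(c_X(x)/w_X(x)\big)\cdot\big(h_X(x)/w_X(x)\big)$, and since both $c_X/w_X=\av{\kappa}_\nu/\av{I_0}_\nu$ and $h_X/w_X=\av{I_1}_\nu/\av{I_0}_\nu$ are $w_y$--weighted averages of the corresponding component ratios, it suffices to bound each component ratio uniformly in $y$. For the first, the monotonicity of $x\mapsto K_{\ve,D}(x,y,0,T)$ gives $I_0(x,y)\ge\kappa_y(x)\int_0^\infty e^{-\lambda_{x,y}s-\mu s^2/2}\,ds\ge\frac12\kappa_y(x)\big[\tfrac{a(T)}{\ve\sig_A(T)}+\tfrac{x}{\ve\sig_A^2(T)}\big]^{-1}$ for all $y\le y_\infty$, which is (I3), (K3). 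For the second, $I_1(x,y)/I_0(x,y)=E[X_{\ve,y,T}-x\,|\,X_{\ve,y,T}>x]$ is bounded by $C_1\big[\tfrac{a(T)}{\ve\sig_A(T)}+\tfrac{x}{\ve\sig_A^2(T)}\big]^{-1}$ uniformly in $y$ via Lemma A.3 (concavity of $q_\ve$ again, applied through (R8)), which is (L3). Multiplying, the prefactors cancel and no concentration of the posterior $\nu_{x,T}$ is needed.

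The gap is exactly where you feared it would be, and it is worse than a ``routine estimate.'' To conclude $\av{I_0(x,y)}_\nu\ge c'(\del)I_0(x,y_\infty)$ uniformly in $x>0$ you need $I_0(x,y)\ge c(\del)I_0(x,y_\infty)$ for $y\in(y_\infty-\del,y_\infty)$, and since $\kappa_y\le\kappa_{y_\infty}$ and $e^{-\lambda_{x,y}s}\le e^{-\lambda_{x,y_\infty}s}$, this ultimately requires a \emph{uniform-in-$x$} lower bound on $\kappa_y(x')/\kappa_{y_\infty}(x')$. Near $x'=0$ this ratio tends to $\pa_x q_\ve(0,y,T)/\pa_x q_\ve(0,y_\infty,T)$, and the only lower bound available at this stage — Proposition 3.3 of \cite{cd}, i.e.\ $q_\ve(x,y,T)\ge 2m_{1,A}(T)xy/\sig_A^2(T)$, which is what produces (CC4), (Z4) in the paper — degenerates as $T\ra\infty$ because $m_{1,A}/\sig_A^2\ra 0$. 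The statement you actually need, that $\pa_x q_\ve(0,y,T)$ stays bounded below (in fact tends to $2$) uniformly for $y$ in a compact subset of $(0,y_\infty]$, is essentially Proposition 5.3, and the proof of Proposition 5.3 for $A(\cdot)=1/\La_\ve(\cdot)$ requires the hypothesis (\ref{BY4}), which is supplied by Proposition 4.2, whose proof in turn invokes Lemma 4.2. So running your step (iii) honestly would make the argument circular. You should either prove the needed comparability of $I_0(x,\cdot)$ from the weaker tools available before Lemma 4.2 (which I do not see how to do), or adopt the paper's factorization, which bypasses the issue entirely.

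Two small additional remarks. First, your step (ii) asserts $\kappa_y(x)$ is nondecreasing in $y$; this is plausible on maximum-principle grounds but is not among the properties of $K_{\ve,D}$ cited in this paper, so if you keep this route you should justify it. Second, as you note yourself, pulling $\kappa_{y_\infty}(x)$ out as a standalone prefactor would blow up as $x\ra 0$; the paper's factorization $\beta_X=(c_X/w_X)(h_X/w_X)$ keeps $\kappa$ coupled to $I_0$ automatically and is the reason the whole thing works uniformly down to $x=0$.
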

\begin{proof}
We bound $\beta_{X_{\ve,t}}(\cdot)$ from above by a constant times the beta function of the corresponding random variable  in the whole line problem. The result then follows from the argument in the proof of Proposition 3.1. To obtain the bound we note that
\be \label{H3}
c_\ve(x,T) \ = \ \int_0^{y_\infty}K_{\ve,D}(x,y,0,T)G_\ve(x,y,0,T)c_\ve(y,0) \ dy  \ ,
\ee
and write
\begin{multline} \label{I3}
w_\ve(x,T) \ = \ \int_x^\infty c_\ve(x',T) \ dx' \ = \ \int_x^\infty dx' \int_0^{y_\infty}K_{\ve,D}(x',y,0,T)G_\ve(x',y,0,T)c_\ve(y,0) \ dy  \\
\ge \  \int_0^{y_\infty}K_{\ve,D}(x,y,0,T)\left[\int_x^\infty dx' \ G_\ve(x',y,0,T)\right] c_\ve(y,0) \ dy  \ ,
\end{multline}
since the function $x\ra K_{\ve,D}(x,y,0,T)$ is increasing.  Observe that
\be \label{J3}
\frac{1}{G_\ve(x,y,0,T)}\int_x^\infty dx' \ G_\ve(x',y,0,T) \ = \ \frac{B_\ve(x,y,T)}{A_\ve(x,y,T)} \ , 
\ee
where $A_\ve(x,y,T), \ B_\ve(x,y,T)$ are the functions $A_\ve(x,T), \ B_\ve(x,T)$ of  (\ref{AG2}), (\ref{AH2}) with $\tilde{c}_\ve(\cdot,0)$ given by the Dirac delta function
concentrated at $y$.  Since Lemma 4.1 implies that $\lim_{T\ra\infty}a(T)=\infty, \ \lim_{T\ra\infty} b(T)/a(T)=0$,  we  have from (\ref{AG2}), (\ref{AJ2}) there  exists $T_1\ge T_0$ such that
\be \label{K3}
\frac{1}{G_\ve(x,y,0,T)}\int_x^\infty dx' \ G_\ve(x',y,0,T) \ \ge \ \frac{1}{2}  \left[\frac{a(T)}{\ve\sig_A(T)}+\frac{x}{\ve\sig_A^2(T)}\right]^{-1} \  \ x\ge 0, \ T\ge T_1, \ 0<y<y_\infty \ .
\ee
Arguing as in the proof of Lemma 4.1 we also see that the variable $X_{\ve,y,T}$ defined there satisfies the inequality 
\be \label{L3}
E[X_{\ve,y,T}-x \ | \ X_{\ve,y,T}>x] \ \le \  C_1\left[\frac{a(T)}{\ve\sig_A(T)}+\frac{x}{\ve\sig_A^2(T)}\right]^{-1} \  \ x\ge 0, \ T\ge T_1, \ 0<y<y_\infty \ ,
\ee
for some constant $C_1>0$.  It follows from (\ref{K3}), (\ref{L3}) that $\beta_{X_{\ve,T}}(\cdot)\le 2C_1$ if $T\ge T_1$.  The upper bound on $\beta_{X_{\ve,T}}(\cdot)$ in the region $T_0<T<T_1$ is straightforward. 
\end{proof}
\begin{proposition}
Assume  $c_\ve(\cdot,0)$ satisfies the conditions of Lemma 3.2.  Then for any $T_0>0$ there is a constant $C_0$ such that the function $\La_\ve(\cdot)$ defined by (\ref{C1})  satisfies the inequality $d\La_\ve(t)/dt\le C_0$ when $t\ge T_0$. 
\end{proposition}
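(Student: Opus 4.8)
The plan is to read off the coarsening rate from the boundary flux, represent that flux through the Dirichlet Green's function, and dominate it by an interior value of $c_\ve$ that is controlled by Lemma 4.2.

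\medskip\noindent\emph{The coarsening identity.} Differentiating the second equality in (\ref{C1}), integrating the PDE (\ref{A1}) over $x\in(0,\infty)$, and using $c_\ve(0,t)=0$ together with the rapid decay of $c_\ve(\cdot,t)$ at $\infty$, one obtains
\[
\frac{d\La_\ve(t)}{dt}\ =\ \frac{\ve}{2}\,\La_\ve(t)^2\,\frac{\pa c_\ve(0,t)}{\pa x}
\]
(equivalently, this is (\ref{H1}) combined with $\int_0^\infty c_\ve(x,t)\,dx=1/\La_\ve(t)$). Since $\La_\ve(t)\to\infty$ by Lemma 4.1, it suffices to produce $C,T_1>0$ with $\tfrac{\ve}{2}\pa_xc_\ve(0,t)\le C/\La_\ve(t)^2$ for $t\ge T_1$; on the compact range $[T_0,T_1]$ the quantity $d\La_\ve(t)/dt$ is bounded by continuity of the solution.

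\medskip\noindent\emph{Reduction to an interior value.} Writing $c_\ve$ via (\ref{H3}) and using $K_{\ve,D}(0,y,0,T)=0$ (the Dirichlet condition),
\[
\frac{\ve}{2}\,\frac{\pa c_\ve(0,T)}{\pa x}\ =\ \frac{\ve}{2}\int_0^{y_\infty}\frac{\pa K_{\ve,D}}{\pa x}(0,y,0,T)\,G_\ve(0,y,0,T)\,c_\ve(y,0)\,dy .
\]
Fix $x_0$ of order $\ve$. The goal is the pointwise-in-$y$ boundary-layer estimate
\[
\frac{\ve}{2}\,\frac{\pa K_{\ve,D}}{\pa x}(0,y,0,T)\,G_\ve(0,y,0,T)\ \le\ C_1\,K_{\ve,D}(x_0,y,0,T)\,G_\ve(x_0,y,0,T)\qquad(0<y\le y_\infty,\ T\ge T_1),
\]
which, integrated against $c_\ve(y,0)\,dy$ and compared with (\ref{H3}) at $x=x_0$, yields $\tfrac{\ve}{2}\pa_xc_\ve(0,T)\le C_1c_\ve(x_0,T)$. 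The Gaussian quotient $G_\ve(0,y,0,T)/G_\ve(x_0,y,0,T)=\exp[x_0\{x_0+2(m_{2,A}(T)-m_{1,A}(T)y)\}/(2\ve\sig_A^2(T))]$ tends to $1$ uniformly for $y\in(0,y_\infty]$, because $(m_{2,A}(T)-m_{1,A}(T)y)/\sig_A^2(T)=(a(T)-b(T)y)/\sig_A(T)\to0$ by Lemma 4.1. So it remains to compare $\tfrac{\ve}{2}\pa_xK_{\ve,D}(0,y,0,T)$ with $K_{\ve,D}(x_0,y,0,T)$: in the rescaled variable $f(z)=K_{\ve,D}(\sqrt\ve\sig_A(T)z,y,0,T)$ — increasing, $f(0)=0$, $f(\infty)=1$, with $z\mapsto-\log(1-f(z))$ concave (Proposition 3.2 and (1.17) of \cite{cd}) — the required inequality is $\tfrac{\sqrt\ve}{2\sig_A(T)}f'(0)\le C_1'f(\zeta_0)$ with $\zeta_0=x_0/(\sqrt\ve\sig_A(T))\to0$, i.e.\ the assertion that $K_{\ve,D}(\cdot,y,0,T)$ rises away from the boundary on a length scale comparable to $\ve$, equivalently that $f'$ does not decay by more than a fixed factor over $[0,\zeta_0]$. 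I would derive this from a \emph{uniform} (in $y\in(0,y_\infty]$ and in $T$) quantitative form of the concavity of $-\log(1-f)$ — for instance a bound of the shape $|f''(z)|\le Cf'(0)^2$, or equivalently $\pa_xK_{\ve,D}(0,y,0,T)\le C/\ve$ together with a matching lower bound on the boundary-layer thickness — supplied by the Gaussian-ratio estimates of \cite{cd} (the same ones underlying (\ref{AJ2}) and (\ref{L3}), now used near $x=0$); granting this and choosing $x_0$ so that $f'(0)\zeta_0$ is small gives $f(\zeta_0)\ge\tfrac12 f'(0)\zeta_0$, hence the claim, with $C_1$ depending only on $x_0/\ve$.

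\medskip\noindent\emph{Conclusion.} By Lemma 4.2 there is $C$ with $\beta_{X_{\ve,t}}(\cdot)\le C$ for $t\ge T_0$; enlarge $T_1\ge T_0$ so that the estimates above also hold and $\La_\ve(T_1)$ is large. Choosing $c_{X_{\ve,T}}=c_\ve(\cdot,T)$ in (\ref{W2})--(\ref{X2}) we have $w_{X_{\ve,T}}(0)=1/\La_\ve(T)$ by (\ref{C1}) and $h_{X_{\ve,T}}(0)=1$ by (\ref{B1}); hence $w_{X_{\ve,T}}(x_0)\le1/\La_\ve(T)$ and $h_{X_{\ve,T}}(x_0)=1-\int_0^{x_0}w_{X_{\ve,T}}(x')\,dx'\ge1-x_0/\La_\ve(T)\ge\tfrac12$ for $T\ge T_1$, so by (\ref{X2})
\[
c_\ve(x_0,T)\ =\ \beta_{X_{\ve,T}}(x_0)\,\frac{w_{X_{\ve,T}}(x_0)^2}{h_{X_{\ve,T}}(x_0)}\ \le\ \frac{2C}{\La_\ve(T)^2}\qquad(T\ge T_1).
\]
Combining the three steps, $d\La_\ve(T)/dT=\La_\ve(T)^2\cdot\tfrac{\ve}{2}\pa_xc_\ve(0,T)\le\La_\ve(T)^2\cdot C_1c_\ve(x_0,T)\le2CC_1$ for $T\ge T_1$, and enlarging the constant to cover $[T_0,T_1]$ finishes the proof. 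The only genuinely delicate step is the boundary-layer estimate of the second paragraph; the rest is bookkeeping with the quantities produced by Lemmas 4.1 and 4.2, and it is there that the detailed Green's-function estimates of \cite{cd} must enter.
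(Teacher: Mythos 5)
Your route is structurally different from the paper's, and the step you yourself flag as ``the only genuinely delicate one'' is precisely where your argument diverges from what is actually available.

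The paper does not work directly at large time $T$ against the original initial data. Instead it first rescales so that $\langle X_{\ve,0}\rangle=1$ with $\beta_{X_{\ve,0}}(\cdot)\le C_1$ (using Lemma 4.2), i.e.\ the new ``initial data'' is the rescaled time-$T$ profile, and then proves the single integral inequality (\ref{N3}) at time $1$ by comparing (\ref{Q3}) with (\ref{R3}). That comparison is \emph{not} a pointwise comparison of the two integrands — for small $y$ the integrand of (\ref{Q3}) is of order $G_\ve(0,y,0,1)$ whereas the integrand of (\ref{R3}) vanishes like $y$ — and the paper closes this gap using (\ref{S3}), which says the rescaled data cannot concentrate near $y=0$ precisely because its beta function is bounded. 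You use Lemma 4.2 in a completely different place (to bound the interior value $c_\ve(x_0,T)$ by $2C/\La_\ve(T)^2$, which is a clean and correct step), but not where the paper actually needs it.

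The consequence is a real gap. You assert the pointwise bound
\[
\tfrac{\ve}{2}\,\pa_x K_{\ve,D}(0,y,0,T)\,G_\ve(0,y,0,T)\ \le\ C_1\,K_{\ve,D}(x_0,y,0,T)\,G_\ve(x_0,y,0,T)
\]
uniformly for $y\in(0,y_\infty]$ and $T$ large. The Gaussian-ratio piece is fine. But the remaining comparison of $\tfrac12\pa_x q_\ve(0,y,T)$ with $K_{\ve,D}(x_0,y,0,T)$ fails with the estimates from \cite{cd} that the paper actually invokes: Proposition~5.1 of \cite{cd} gives $\tfrac12\pa_x q_\ve(0,y,T)\le 1+m_{1,A}(T)y/\sig^2_A(T)$, which stays of order $1$ as $y\to0$, while the lower bound (\ref{CC4}) gives only $q_\ve(x_0,y,T)\ge 2m_{1,A}(T)x_0y/\sig^2_A(T)$, so $K_{\ve,D}(x_0,y,0,T)\to0$ linearly in $y$. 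To make your reduction work one needs the \emph{matching} statement that $\pa_x q_\ve(0,y,T)$ itself vanishes linearly as $y\to0$, uniformly in $T$; this is the ``quantitative concavity/boundary-layer thickness'' estimate you invoke, and concavity of $-\log(1-f)$ alone gives you the upper bound $g(\zeta_0)\le g'(0)\zeta_0$, not the lower bound $g(\zeta_0)\gtrsim g'(0)\zeta_0$ that you actually need. Nothing cited in the paper supplies that second-derivative control near $x=0$, and the paper's own proof is designed exactly to avoid needing it: by rescaling, the $y$-integration is against the time-$T$ profile (with controlled mass near the origin via (\ref{S3})), so the degeneration of the integrand ratio at $y\to0$ never has to be confronted pointwise. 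If you want to keep your more direct route, you must either (i) prove the quantitative small-$y$ decay of $\pa_x q_\ve(0,y,T)$ uniformly in $T$, or (ii) follow the paper and rescale before integrating, which converts the missing pointwise bound into the mass estimate (\ref{S3}) that Lemma 4.2 does deliver.
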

\begin{proof}
We follow the arguments of Lemma 7.2 and Lemma 7.3 of \cite{cdw}, using Lemma 4.2 to substitute for the argument of Lemma 7.2. Thus by rescaling
and Lemma 4.2 we can assume  that
\be \label{M3}
\langle X_{\ve,0}\rangle \ = \ 1, \quad \beta_{X_{\ve,0}}(\cdot) \ \le \ C_1 ,
\ee
where $C_1$ is the bound obtained in Lemma 4.2. We wish to show there is a second constant $C_2$, depending only on $C_1$,  such that
\be \label{N3}
\frac{\ve}{2}\frac{\pa c_\ve(0,1)}{\pa x} \ \le \ C_2c_\ve(\ve,1) \quad {\rm if \ } \ \ve\le 1.
\ee
Expressing the function $K_{\ve,D}$ of (\ref{A3}) as
\be \label{O3}
K_{\ve,D}(x,y,0,T) \ = \ \left\{1-\exp\left[-\frac{q_\ve(x,y,T)}{\ve}\right]\right\} \ ,
\ee
we have  that
\be \label{P3}
\frac{\ve}{2}\frac{\pa c_\ve(0,1)}{\pa x} \ = \ \frac{1}{2}\int_0^{\infty} \frac{\pa q_\ve(0,y,1)}{\pa x} G_\ve(0,y,0,1)c_\ve(y,0) \ dy \ .
\ee
We have then from (\ref{P3}) and Proposition 5.1 of \cite{cd} the inequality
\be \label{Q3}
\frac{\ve}{2}\frac{\pa c_\ve(0,1)}{\pa x} \ \le \ \int_0^{\infty} \left[1+\frac{m_{1,A}(1)y}{\sig^2_A(1)}\right] G_\ve(0,y,0,1)c_\ve(y,0) \ dy \ ,
\ee
where $A(\cdot)\equiv 1/\La_\ve(\cdot)$.  Using the lower bound in Proposition 3.3 of \cite{cd} we have that
\be \label{R3}
c_\ve(\ve,1) \ \ge \ \frac{2m_{1,A}(1)}{\sig^2_A(1)} \int_0^{\infty} y\exp\left[-\frac{2m_{1,A}(1)y}{\sig^2_A(1)}\right] G_\ve(\ve,y,0,1)c_\ve(y,0) \ dy \ .
\ee
Observe that the function $A(\cdot)$ is decreasing and satisfies $A(0)=1$. The inequality (\ref{N3}) follows from (\ref{Q3}), (\ref{R3}) by using the argument in the proof of Lemma 7.2 of \cite{cdw}. The key point is that the inequality (7.6) of \cite{cdw} continues to hold.  That is if $X_{\ve,0}$ satisfies (\ref{M3}) then for any $\del$ with $0<\del<1$ there exists  a constant $\nu(\del)>0$, depending only on  $C_1$, such that
\be \label{S3}
P(X_{\ve,0}<\nu(\del)) \ \le \ \del \ .
\ee
The inequality (\ref{S3}) follows from (\ref{Y2}), (\ref{Z2}). 
Once (\ref{N3}) is established the proof of the lemma follows as in the proof of Lemma 7.3 of \cite{cdw}. 
\end{proof}
 Proposition 4.1 gives an upper bound on the rate of coarsening. We can also obtain a lower bound by using the log concavity property of the function $K_{\ve,D}$, which was crucial to the proof of Lemma 4.1.  We illustrate this first for the classical ($\ve=0$) CP model.
 \begin{lem}
 Assume $0<\ve\le 1$ and $c_\ve(x,0)=f(x)e^{-x}, \ x>0,$ is the initial data for the diffusive CP model (\ref{A1}), (\ref{B1}), where $f:[0,\infty)\ra\infty$ is  continuous non-negative increasing, satisfying $\lim_{x\ra \infty} f(x)=1$ and  with the property that the function $-\log[1-f(\cdot)]$ is concave.  Then there is a universal constant $\del_0>0$ such that $\La_\ve(1)\ge [1+\del_0]\La_\ve(0)$.
  \end{lem}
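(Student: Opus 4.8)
The plan is to convert $\La_\ve(1)\ge[1+\del_0]\La_\ve(0)$ into a quantitative statement about the mass lost through $x=0$ over unit time. By (\ref{C1}) one has $\La_\ve(1)/\La_\ve(0)=\int_0^\infty c_\ve(x,0)\,dx\big/\int_0^\infty c_\ve(x,1)\,dx$, so it suffices to produce a universal $\del_0>0$ with $\int_0^\infty c_\ve(x,1)\,dx\le[1+\del_0]^{-1}\int_0^\infty c_\ve(x,0)\,dx$. First I would record the elementary scaling facts: since $c_\ve(\cdot,0)=f(\cdot)e^{-(\cdot)}$ with $0\le f\le1$, one has $\int_0^\infty c_\ve(x,0)\,dx\le1$ and hence $\La_\ve(0)\ge1$; as $\La_\ve(\cdot)$ is increasing (by (\ref{H1}) and the maximum principle) this gives $A(\cdot)\equiv1/\La_\ve(\cdot)\le1$ on $[0,1]$, so $m_{1,A}(t)=\exp[\int_0^t A]\le e^t$ there, and by the identity (\ref{O2}) the quantity $\xi_1:=m_{2,A}(1)/m_{1,A}(1)=\int_0^1 m_{1,A}(t)^{-1}\,dt$ satisfies $\xi_1\ge\int_0^1 e^{-t}\,dt=1-e^{-1}=:\xi_0$, a universal lower bound.

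Second I would bound the surviving mass through the Green's function. By (\ref{C3}), $\int_0^\infty c_\ve(x,1)\,dx=\int_0^\infty u_\ve(y,0,1)\,c_\ve(y,0)\,dy$, and the maximum principle gives $G_{\ve,D}\le G_\ve$, hence $u_\ve(y,0,1)\le\int_0^\infty G_\ve(x,y,0,1)\,dx=P(X_{\ve,y,1}>0)$ with $X_{\ve,y,1}$ the Gaussian variable of (\ref{H2}). Consequently the mass defect is at least $\int_0^\infty P(X_{\ve,y,1}\le0)\,f(y)e^{-y}\,dy$; since the mean of $X_{\ve,y,1}$ is $m_{1,A}(1)(y-\xi_1)\le0$ whenever $y\le\xi_1$, one has $P(X_{\ve,y,1}\le0)\ge\tfrac12$ there, so the defect is $\ge\tfrac12\int_0^{\xi_0}f(y)e^{-y}\,dy$. (When $\ve=0$ this probability equals $\mathbf{1}\{y\le\xi_1\}$, which only sharpens the estimate, and the argument reduces to the classical CP computation $\La_0(1)/\La_0(0)=\int_0^\infty fe^{-y}\,dy\big/\int_{\xi_1}^\infty fe^{-y}\,dy$ alluded to in the text.)

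The crux is to show $\int_0^{\xi_0}f(y)e^{-y}\,dy\ge c_0\int_0^\infty f(y)e^{-y}\,dy$ for a universal $c_0>0$; here the hypothesis that $g:=-\log[1-f]$ is concave enters — it is the analogue for the initial data of the log-concavity of $K_{\ve,D}$ used in Lemma 4.1. Writing $f=1-e^{-g}$, the function $g\ge0$ is finite (otherwise concavity would fail), concave, with $g(\infty)=\infty$. The chord inequality together with $g(0)\ge0$ gives $g(y)\ge(y/\xi_0)g(\xi_0)$ on $[0,\xi_0]$, and also $g'_{+}(\xi_0)\le g(\xi_0)/\xi_0$, whence $g(y)\le(g(\xi_0)/\xi_0)y$ on $[\xi_0,\infty)$. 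Bounding $f$ from below using that $1-e^{-(\cdot)}$ is increasing, and from above by $\min(1,g)$, one obtains $\int_0^{\xi_0}f e^{-y}\,dy\ge\Psi(g(\xi_0))$ and $\int_{\xi_0}^\infty f e^{-y}\,dy\le\Theta(g(\xi_0))$, where $\Psi(s)=\int_0^{\xi_0}(1-e^{-ys/\xi_0})e^{-y}\,dy$ and $\Theta(s)=\int_{\xi_0}^\infty\min(1,ys/\xi_0)e^{-y}\,dy$ are positive and increasing on $(0,\infty)$. The ratio $\Psi(s)/\Theta(s)$ is continuous and positive on $(0,\infty)$ with finite positive limits $[1-(1+\xi_0)e^{-\xi_0}]/[(1+\xi_0)e^{-\xi_0}]$ as $s\to0^+$ and $e^{\xi_0}-1$ as $s\to\infty$, so $r:=\inf_{s>0}\Psi(s)/\Theta(s)>0$ is a universal constant; hence $\int_0^{\xi_0}f e^{-y}\,dy\ge r\int_{\xi_0}^\infty f e^{-y}\,dy$, i.e. $\int_0^{\xi_0}f e^{-y}\,dy\ge\tfrac{r}{1+r}\int_0^\infty f e^{-y}\,dy$.

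Combining the three steps gives $\int_0^\infty c_\ve(x,1)\,dx\le[1-\tfrac{r}{2(1+r)}]\int_0^\infty c_\ve(x,0)\,dx$, hence $\La_\ve(1)/\La_\ve(0)\ge 1+\tfrac{r}{2+r}$, and one may take $\del_0=r/(2+r)$, which depends on nothing. I expect the only real difficulty to be the crux step of the preceding paragraph: without concavity of $-\log[1-f]$ the mass $\int_0^{\xi_0}f e^{-y}\,dy$ could be an arbitrarily small fraction of the total (imagine $f$ close to $\mathbf{1}\{y>\xi_0\}$), and it is exactly concavity that prevents $f$ from being negligible on $[0,\xi_0]$ while still tending to $1$. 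The remaining ingredients — the inequality $G_{\ve,D}\le G_\ve$, the monotonicity of $\La_\ve(\cdot)$, and the one-line Gaussian estimate — are robust and uniform in $\ve\ge0$, so the same argument also yields the classical $\ve=0$ case.
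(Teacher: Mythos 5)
Your proof is correct but takes a genuinely different and more elementary route than the paper's. The paper first treats $\ve=0$, where it uses $d\La_0/dt=\beta_{X_{0,t}}(0)$, proves the lower bound $\beta_{X_{0,0}}(x)\ge c_1 x/(e[1+x])$ from the concavity of $q=-\log[1-f]$ via a chord inequality, and uses the explicit characteristic flow (\ref{AB3}) to push the evaluation point away from $0$; for $\ve>0$ it then invokes Propositions 5.1 and 3.3 of \cite{cd} to control the diffusive boundary-flux formula (\ref{AM3}), together with the crude lower bound $h_X(y)\ge\frac{1}{12}h_X(0)$ on $0<y\le\La_0(0)/2$. You bypass the beta function entirely and convert the multiplicative increment into a mass-loss statement: dominate $G_{\ve,D}$ by $G_\ve$, observe that the Gaussian $X_{\ve,y,1}$ has nonpositive mean precisely when $y\le\xi_1$ so that $P(X_{\ve,y,1}\le0)\ge\tfrac12$ there, and then use the concavity of $g=-\log[1-f]$ to show that a universal fraction of $\int f e^{-y}\,dy$ lives on $[0,\xi_0]$. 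Your chord-inequality step (the two bounds $g(y)\ge g(\xi_0)y/\xi_0$ on $[0,\xi_0]$ and $g(y)\le g(\xi_0)y/\xi_0$ on $[\xi_0,\infty)$, exploiting $g(0)\ge0$) is the clean analogue of the paper's inequalities (\ref{X3})--(\ref{Y3}), but you deploy it more economically. Both proofs lean on the same monotone/concave structure; what yours buys is independence from the companion-paper machinery and a more transparent, uniform-in-$\ve$ mechanism.

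One small slip, easily repaired: you justify $\La_\ve(0)\ge 1$ by ``$\int_0^\infty c_\ve(x,0)\,dx\le1$, hence by (\ref{C1}) $\La_\ve(0)\ge1$.'' The second equality in (\ref{C1}) presupposes $\int_0^\infty xc_\ve(x,0)\,dx=1$, which fails for $c_\ve(x,0)=f(x)e^{-x}$ unless $f\equiv1$. The conclusion $\La_\ve(0)\ge1$ is nevertheless correct for a better reason: $\La_\ve(0)=\int_0^\infty yf(y)e^{-y}\,dy\big/\int_0^\infty f(y)e^{-y}\,dy$, and since $f$ is increasing the measure $f(y)e^{-y}\,dy$ stochastically dominates $e^{-y}\,dy$ (increasing likelihood ratio), so its mean is $\ge1$. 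With that substitution the rest of your chain --- $A(\cdot)\le1$ on $[0,1]$, hence $\xi_1\ge 1-e^{-1}$, hence the universal $r>0$ --- goes through as written.
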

 \begin{proof}
 We first prove the result in the case $\ve=0$, so for the classical model of \cite{cp}. Let $X_{0,t}, \ t\ge 0,$ be the random variables with pdf proportional to $c_0(\cdot,t), \ t\ge 0$, where $c_0(x,t), \ x,t\ge 0,$ is the solution of the CP model (\ref{A1}), (\ref{B1}) with $\ve=0$. Then one has that
 \be \label{T3}
 \frac{d\La_0(t)}{dt} \ = \ \beta_{X_{0,t}}(0) \ ,  \quad t>0 \ .
 \ee
 Furthermore $\beta_{X_{0,t}}(x)=\beta_{X_{0,0}}(F_A(x,t)), \ x\ge 0,$ where $A(\cdot)\equiv 1/\La_0(\cdot)$ and $F_A$ is defined by
\be \label{U3}
F_A(x,t) \ = \ \frac{x+m_{2,A}(t)}{m_{1,A}(t)} \ , \quad x,t\ge0 \ .
\ee
We have from (\ref{X2}) that
\be \label{V3}
\beta_X(x) \ = \ \frac{c_X(x)}{w_X(x)} E[X-x \ | \ X>x] \ .
\ee
Arguing as in the proof of Lemma A.3, we see there is a universal constant $c_1>0$ such that $E[X_{0,0}-x \ | \ X_{0,0}>x]\ge c_1, \ x>0$.  We consider for $X=X_{0,0}$, 
\be \label{W3}
\frac{w_X(x)}{c_X(x)} \ = \ \int_x^\infty \frac{f(x')}{f(x)}e^{x-x'}\ dx' \ .
\ee
We write $f(x)=1-e^{-q(x)}$ where $q(\cdot)$ is positive increasing and concave.  If $q(x)\ge 1$ then the RHS of (\ref{W3}) is bounded above by $[1-e^{-1}]^{-1}$. If $q(x)\le 1$ then we have from (\ref{W3}) that
\be \label{X3}
\frac{w_X(x)}{c_X(x)} \ \le \ e \int_x^\infty \frac{q(x')}{q(x)}e^{x-x'}\ dx' \ .
\ee
From the concavity of $q(\cdot)$ and the fact that $q(0)\ge 0$ we obtain the inequality 
\be \label{Y3}
q(x') \ \le \ \ q(x)+\frac{q(x)}{x}(x'-x) \quad {\rm for \ } x'>x \ ,
\ee
whence the RHS of (\ref{X3}) is bounded above by $e[1+x]/x$. We conclude from (\ref{V3})-(\ref{Y3}) that
\be \label{Z3}
\beta_{X_{0,0}}(x) \ \ge \ \frac{c_1x}{e[1+x]} \ , \quad x>0 \ .
\ee
   Since the function $A(\cdot)=1/\La_0(\cdot)$ is decreasing we see that $m_{1,A}(s)\le \exp[A(0)s] , s>0,$ whence we have that
 \be \label{AA3}
\frac{m_{2,A}(t)}{m_{1,A}(t)} \ \ge \ \frac{1}{A(0)}\left[1-e^{-A(0)t}\right] \ , \quad t>0 \ .
\ee
By the argument of Lemma A.3 there are universal constants $C_2,c_2>0$ such that $c_2\le \La_0(0)\le C_2$.  The result follows from this and (\ref{Z3}), (\ref{AA3})  upon using the identity,
\be \label{AB3}
\La_0(1)-\La_0(0) \ = \ \int_0^1 \beta_{X_{0,0}}(F_A(0,t)) \ dt \ .
\ee

To extend the argument to $\ve>0$ we use (\ref{P3}) and Proposition 5.1 of \cite{cd}, whence we obtain   the inequality
\be \label{AM3}
\frac{\ve}{2}\frac{\pa c_\ve(0,t)}{\pa x} \ \ge \ \int_0^{\infty} \frac{m_{1,A}(t)y}{\sig^2_A(t)}G_\ve(0,y,0,t)c_\ve(y,0) \ dy \ .
\ee
Then (\ref{B1}), (\ref{D1}), (\ref{X2}), (\ref{Z3}), (\ref{AM3}) yield the inequality
\be \label{AN3}
\frac{d\La_\ve(t)}{dt} \ \ge \  \frac{c_1m_{1,A}(t)}{e\sig^2_A(t)}\int_0^{\infty}\frac{y^2}{1+y} G_\ve(0,y,0,t)\frac{w_X(y)^2}{h_X(0)h_X(y)} \ dy \ ,
\ee
where $X=X_{0,0}$. As in the proof of Lemma 7.3 of \cite{cdw}, we use an inequality
\be \label{AO3}
h_X(y) \ \ge \ \frac{1}{12}h_X(0) \ , \quad {\rm if \ } 0<y\le \La_0(0)/2 \ .
\ee 
We combine (\ref{AO3})  with the inequality $h_X(y)/w_X(y)=E[X-y \ | \ X>y]\le C_1$ for some constant $C_1$, as in the proof of Lemma A.3. Then we integrate (\ref{AN3}) as before to obtain the result for $0<\ve\le 1$. 
 \end{proof}
\begin{proposition}
Assume  $c_\ve(\cdot,0)$ satisfies the conditions of Lemma 3.2.  Then for any $T_0>0$ there is a constant $\del_0>0$ such that the function $\La_\ve(\cdot)$ defined by (\ref{C1})  satisfies the inequality $\La_\ve(T+\La_\ve(T))\ge [1+\del_0]\La_\ve(T)$ when $T\ge T_0$. 
\end{proposition}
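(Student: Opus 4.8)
The plan is to deduce the statement from a rescaled version of Lemma 4.3 applied about the time $T$. Since $\La_\ve(\cdot)$ is $C^1$ and, by the maximum principle applied to (\ref{A1}), strictly increasing, the map $T\mapsto\La_\ve(T+\La_\ve(T))-\La_\ve(T)$ is continuous and strictly positive while $\La_\ve$ is bounded on compacta, so on any interval $[T_0,T_1]$ the ratio $\La_\ve(T+\La_\ve(T))/\La_\ve(T)$ exceeds $1$ by a fixed amount; hence it suffices to treat $T\ge T_1$, with $T_1$ chosen so large that $\La_\ve(T)\ge\ve$ there (possible as $\La_\ve(T)\to\infty$ by Lemma 4.1). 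Fix such a $T$, put $\mu=\La_\ve(T)$, and rescale as explained in $\S1$: $\hat c(x,s)\DEF\mu^{2}c_\ve(\mu x,\,T+\mu s)$ solves (\ref{A1}), (\ref{B1}) with zero Dirichlet condition, diffusion constant $\hat\ve=\ve/\mu\le1$, and $\hat\La(s)=\La_\ve(T+\mu s)/\mu$, so $\hat\La(0)=1$, $\hat\La$ is increasing, and the claim becomes $\hat\La(1)\ge1+\del_0$. Writing $\hat X_0$ for the variable with density proportional to $\hat c_0\DEF\hat c(\cdot,0)$, one has $\langle\hat X_0\rangle=1$ and, by scale invariance of the beta function, $\beta_{\hat X_0}(\cdot)\le C_1$ from Lemma 4.2.

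The next step is a lower bound for $d\hat\La/ds$, obtained as in the $\ve>0$ part of Lemma 4.3. From (\ref{H1}) and the conservation law, $d\hat\La(s)/ds=\hat\La(s)^2\,\tfrac{\hat\ve}{2}\pa_x\hat c(0,s)$, and Proposition 5.1 of \cite{cd} (used for (\ref{AM3})) gives $\tfrac{\hat\ve}{2}\pa_x\hat c(0,s)\ge\int_0^\infty\tfrac{m_{1,\hat A}(s)y}{\sig^2_{\hat A}(s)}G_{\hat\ve}(0,y,0,s)\hat c_0(y)\,dy$, with $\hat A(\cdot)\DEF1/\hat\La(\cdot)$. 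Using $\hat\La(s)\ge1$, integrating over $s\in[0,1]$, and interchanging the integrals, one is reduced to bounding $\int_0^1\tfrac{m_{1,\hat A}(s)}{\sig^2_{\hat A}(s)}G_{\hat\ve}(0,y,0,s)\,ds$ from below. Changing the variable of integration to $\mu=m_{2,\hat A}(s)/m_{1,\hat A}(s)$ and using (\ref{O2}) together with $m_{1,\hat A}(s)\le e^{s}\le e$ on $[0,1]$ (from $\hat\La\ge1$), the new variable sweeps an interval containing $(0,1-e^{-1})$, and a routine Gaussian estimate (uniform for $\hat\ve\le1$) yields $\int_0^1\tfrac{m_{1,\hat A}(s)}{\sig^2_{\hat A}(s)}G_{\hat\ve}(0,y,0,s)\,ds\ge c_1/y$ for $y$ in any fixed subinterval $[\eta_0,\eta_1]\subset(0,1-e^{-1})$, where $c_1>0$ is independent of $\ve$ and $T$. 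Hence
\be
\hat\La(1)-1\ \ge\ \int_{\eta_0}^{\eta_1}y\,\hat c_0(y)\,\frac{c_1}{y}\,dy\ =\ c_1\,P(\eta_0<\hat X_0\le\eta_1).
\ee

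It then remains to bound $P(\eta_0<\hat X_0\le\eta_1)$ below by a constant independent of $\ve$ and of $T$ large; once this is done, $\del_0\DEF c_1\cdot\tfrac12(e^{-\eta_0}-e^{-\eta_1})$ works for $T$ large, and the bounded range of $T$ is handled by the compactness argument of the first paragraph. This bound is the main obstacle. It should follow from a half line analogue of Proposition 3.1, namely $X_{\ve,T}/\La_\ve(T)\xrightarrow{D}\mathcal X$ as $T\to\infty$, which in turn one would get by rerunning the proof of Proposition 3.1 — that argument uses only $\lim_{T\to\infty}a(T)=\infty$ and $\lim_{T\to\infty}b(T)/a(T)=0$, both valid here by Lemma 4.1 — on the Gaussian mixture $\int_0^{y_\infty}G_\ve(x,y,0,T)c_\ve(y,0)\,dy$, combined with the estimates of \cite{cd} on the Dirichlet ratio $K_{\ve,D}(\cdot,y,0,T)$ showing it deviates from $1$ only within a boundary layer of width $O(\ve)$, which becomes negligible after rescaling by $\La_\ve(T)\to\infty$. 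The delicate point is that the half line solution is represented through the whole line Green's function built from the half line parameter $\La_\ve$ multiplied by $K_{\ve,D}$, so one must marry the convergence mechanism of Proposition 3.1 to the boundary layer control of \cite{cd}; the rescaling, the use of Proposition 5.1 of \cite{cd}, and the Gaussian sweep estimate are otherwise routine extensions of Lemma 4.3.
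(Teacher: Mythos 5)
Your rescaling and the use of (\ref{AM3}) to get a lower bound on $d\hat\La/ds$, integrated over $s\in[0,1]$, faithfully reproduce the mechanics of the paper's Lemma 4.3. The sticking point is the final reduction to a lower bound on $P(\eta_0<\hat X_0\le\eta_1)$, and the route you propose to obtain it is circular. You appeal to a half-line analogue of Proposition 3.1 (convergence of $X_{\ve,T}/\La_\ve(T)$ in distribution to the exponential). But that statement is exactly Proposition 6.1 (\ref{A5}), whose proof relies on Proposition 5.3, whose hypothesis is the growth condition (\ref{BY4}) — and (\ref{BY4}) is precisely the conclusion of Proposition 4.2, the result you are trying to prove. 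In fact the lower bound on $q_\ve$ needed to make $K_{\ve,D}$ close to $1$ on the scale $x\sim\La_\ve(T)$ is not available at this stage: the unconditional lower bound (\ref{CC4}) gives only $q_\ve\ge 2m_{1,A}(T)xy/\sig^2_A(T)$, which is $o(1)$ for $x$ of order $\La_\ve(T)$ as $T\to\infty$, so it cannot keep $K_{\ve,D}$ bounded away from $0$ there. Nor does the upper bound on the beta function from Lemma 4.2, combined with Markov's inequality, localize the mass of $\hat X_0$ inside the window $(0,\alpha(1))\subset(0,1]$ that your Gaussian sweep estimate actually reaches in one unit of rescaled time; it only gives mass in an interval $[\nu,M]$ whose right endpoint $M$ exceeds $1$, and the excess could carry all the mass.

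What the paper does instead, and what your proposal omits, is to obtain a pointwise lower bound on the beta function at time $T$, namely (\ref{AL3}): $\beta_{X_{\ve,T}}(x)\ge c_3\,x/(\La_\ve(T)+x)$. This rests on (\ref{AC3}) (a lower bound on the conditional mean from Lemma A.3) together with the upper bound (\ref{AJ3}) on $w_X/c_X$, and (\ref{AJ3}) is where the log-concavity of $K_{\ve,D}$ in $x$ (equivalently, concavity of $x\mapsto q_\ve(x,y,T)$, Theorem 1.1 of \cite{cd}, applied via the secant estimate (\ref{Y3})) is used in an essential way. Your argument never invokes this concavity. Once (\ref{AL3}) is in hand, rescaling so $\La_\ve(T)\mapsto 1$ gives exactly the hypothesis $\beta_{\hat X_0}(x)\ge c_3 x/(1+x)$, which is the form (\ref{Z3}) of the lower bound on the beta function of the initial data that drives Lemma 4.3's $\ve>0$ argument through (\ref{AN3})--(\ref{AO3}); it replaces the probability bound you were hoping for. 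Without (\ref{AL3}) the integral $\int \hat c_0(y)\,dy$ over the accessible window cannot be bounded below, and the proposal does not close.
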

\begin{proof}
We may assume that $T_0$ is sufficiently large so that we are in the asymptotic regime established in Lemma 4.1. Noting that Lemma A.3 also yields a lower bound comparable to the upper bound (\ref{L3}), we conclude that
\be \label{AC3}
E[X_{\ve,T}-x \ | \ X_{\ve,T}>x] \ \ge \  c_1\left[\frac{a(T)}{\ve\sig_A(T)}+\frac{x}{\ve\sig_A^2(T)}\right]^{-1} \  \ x\ge 0, \ T\ge T_0 \  ,
\ee
for some constant $c_1>0$. 
We wish to obtain a lower bound on $\beta_{X_{\ve,T}}(\cdot)$ similar to the one established in the proof of Lemma 4.3. This will follow from (\ref{AC3})  and an upper bound on the ratio $w_\ve(x,T)/c_\ve(x,T)$, where $c_\ve(x,T), \ w_\ve(x,T)$ are given by (\ref{H3}), (\ref{I3}). 

In analogy to (\ref{AG2}), (\ref{AH2}) we define functions $A_{\ve,D}, \ B_{\ve,D}$ by
\be \label{AD3}
A_{\ve,D}(x,T) \ = \ \int_0^{y_\infty}K_{\ve,D}(x,y,0,T) \exp\left[\frac{b(T)xy}{\ve\sig_A(T)}\right]\tilde{c}_\ve(y,0) \ dy \ , 
\ee
\begin{multline} \label{AE3}
B_{\ve,D}(x,T) \ = \ \int_0^{y_\infty}dy \int_0^\infty dx'  \ K_{\ve,D}(x+x',y,0,T) \\
\times \  \exp\left[\frac{b(T)(x+x')y}{\ve\sig_A(T)}-\frac{a(T)x'}{\ve\sig_A(T)}-\frac{x'(2x+x')}{2\ve\sig^2_A(T)}\right]\tilde{c}_\ve(y,0) \ .
\end{multline} 
Then one has for $X=X_{\ve,T}$ the formula
\be \label{AF3}
\frac{w_X(x)}{c_X(x)} \ = \ \frac{B_{\ve,D}(x,T)}{A_{\ve,D}(x,T)} \ .
\ee
Using the representation (\ref{O3}) we define $y_\infty(x,T)$ by
\be \label{AG3}
y_\infty(x,T)\ = \ \sup\{y: \ 0<y<y_\infty, \  q_\ve(x,y,T)  <\ve \ \}   \ .
\ee
Since $q_\ve(x,0,T)=0$ and the function $y\ra q_\ve(x,y,T)$ is increasing, it follows from (\ref{AG3}) that $0<y_\infty(x,T)\le y_\infty$ and $q_\ve(x,y,T)\le \ve$ if $y<y_\infty(x,T)$, with   $q_\ve(x,y,T)\ge \ve$ if $y_\infty(x,T)<y_\infty$ and $y>y_\infty(x,T)$. It follows then from (\ref{AD3}) that
\begin{multline} \label{AH3}
 A_{\ve,D}(x,T) \ \ge \ \frac{1}{\ve e} \int_0^{y_\infty(x,T)}q_\ve(x,y,T) \exp\left[\frac{b(T)xy}{\ve\sig_A(T)}\right]\tilde{c}_\ve(y,0) \ dy \\
+\left[1-e^{-1}\right] \int^{y_\infty}_{y_\infty(x,T)} \exp\left[\frac{b(T)xy}{\ve\sig_A(T)}\right]\tilde{c}_\ve(y,0) \ dy \ .
\end{multline}
Similarly we have that
\begin{multline} \label{AI3}
B_{\ve,D}(x,T) \ \le \ \frac{1}{\ve}\int_0^{y_\infty(x,T)}dy \int_0^\infty dx'  \ q_\ve(x+x',y,T) \\
\times \  \exp\left[\frac{b(T)(x+x')y}{\ve\sig_A(T)}-\frac{a(T)x'}{\ve\sig_A(T)}-\frac{x'(2x+x')}{2\ve\sig^2_A(T)}\right]\tilde{c}_\ve(y,0) \\
+\int^{y_\infty}_{y_\infty(x,T)}dy \int_0^\infty dx'  \  
\exp\left[\frac{b(T)(x+x')y}{\ve\sig_A(T)}-\frac{a(T)x'}{\ve\sig_A(T)}-\frac{x'(2x+x')}{2\ve\sig^2_A(T)}\right]\tilde{c}_\ve(y,0) \ .
\end{multline}
Applying (\ref{Y3}) to  the functions $x\ra q_\ve(x,y,T)$ and using (\ref{AI2})  we have from (\ref{AH3}), (\ref{AI3}) the inequality
\be \label{AJ3}
\frac{B_{\ve,D}(x,T)}{A_{\ve,D}(x,T)} \ \le \ C_1\left[\frac{a(T)}{\ve\sig_A(T)}+\frac{x}{\ve\sig_A^2(T)}\right]^{-1}\left\{1+
 \frac{1}{x}\left[\frac{a(T)}{\ve\sig_A(T)}+\frac{x}{\ve\sig_A^2(T)}\right]^{-1} \ \right\} \ ,
\ee
for some constant $C_1$. 

It follows from (\ref{AN2}) and Lemma A.3 that $T_0$ can be chosen sufficiently large so that
\be \label{AK3}
c_2\frac{\ve\sig_A(T)}{a(T)} \ \le \ \La_\ve(T) \ \le \ C_2\frac{\ve\sig_A(T)}{a(T)} \ , \quad {\rm for \ } T\ge T_0 \ ,
\ee
for some constants $C_2,c_2>0$. We conclude from (\ref{AC3})-(\ref{AK3}) there is a constant $c_3>0$ such that
\be \label{AL3}
\beta_{X_{\ve,T}}(x) \ \ge \ c_3\frac{x} {\La_\ve(T)+x} \ ,  \quad {\rm for \ } x>0, \ T\ge T_0 \ .
\ee
The result follows now from (\ref{AL3}) and the argument of Lemma 4.3 by scaling $\La_\ve(T)$ to $1$, whence the diffusion coefficient in (\ref{A1}) becomes $\ve/\La_\ve(T)<<1$. 
\end{proof}

\vspace{.1in}

\section{Convergence of the function $x\ra q_\ve(x,y,T)$ as $T\ra\infty$}
In Proposition 3.1 we proved convergence in distribution to the exponential variable for solutions to the whole line diffusive CP problem. This follows from the corresponding convergence in distribution of the positive random variable $\tilde{X}_{\ve,y,T}$, with density proportional to the function $x\ra G_\ve(x,y,0,T), \ x>0$.
To prove that we used certain properties of functions associated with $A(\cdot)\equiv1/\La_\ve(\cdot)$, established in Lemma 3.1. These are as follows:
\begin{multline} \label{A4}
(a)  \ \  \lim_{T\ra\infty}A(T)=0,
\quad (b) \ \  \lim_{T\ra\infty}m_{1,A}(T)=\infty, \ \quad (c) \ \ \lim_{T\ra\infty}\frac{m_{2,A}(T)}{m_{1,A}(T)}=\infty \  , \\
(d) \  \ \lim_{T\ra\infty}\frac{\sig^2_A(T)}{m_{2,A}(T)}=\infty \ , \quad  (e) \ \  \lim_{T\ra\infty}\frac{\sig^2_A(T)}{m_{1,A}(T)^2}<\infty  \ .
\end{multline}
We wish to follow a similar strategy for the half line problem. In Lemma 4.1 we showed that (\ref{A4}) with $A(\cdot)\equiv1/\La_\ve(\cdot)$  holds for the half line problem. The random variable 
$X_{\ve,y,T}$ defined in the proof of Lemma 4.1 has pdf proportional to $x\ra G_{\ve,D}(x,y,0,T), \ x>0$. The variables $\tilde{X}_{\ve,y,T}/
\langle \tilde{X}_{\ve,y,T}\rangle$ and $X_{\ve,y,T}/\langle X_{\ve,y,T}\rangle$ have 
therefore the same distributional limit as $T\ra\infty$ if we can show the ratio of their pdfs, given by the function $x\ra K_{\ve,D}(x,y,0,T)$ of (\ref{A3}),  converges to $1$ as $T\ra\infty $  for  $x$ larger than any small constant times $\min\left\{ \langle \tilde{X}_{\ve,y,T}\rangle, \     \langle X_{\ve,y,T}\rangle             \right\}$. 
In view of (\ref{O3}),  and the fact that the function $x\ra q_\ve(x,y,T)$ is increasing, this is equivalent to obtaining lower bounds on the function $q_\ve(\cdot,y,T)$.

 It was shown in $\S4$ of \cite{cd} that the function $q_0(x,y,T)=\lim_{\ve\ra0}q_\ve(x,y,T)$ is the solution to the variational problem
\begin{multline} \label{B4}
q_0(x,y,T) \ = \\
 \min\left\{\frac{1}{2}\int_\tau^T\left[\frac{dx(s)}{ds}-\la(x(s),y,s)\right]^2 \ ds \ \Big| \ 0<\tau<T, \ x(T)=x, \ x(\cdot)>0, \ x(\tau)=0 \right\} \ ,
\end{multline}
where $\la(\cdot,\cdot,\cdot)$ is defined by
\be \label{C4}
\la(x,y,s) \ = \ \left[A(s)+\frac{1}{\sig_A^2(s)}\right]x-1+\frac{m_{2,A}(s)}{\sig_A^2(s)}-\frac{m_{1,A}(s)y}{\sig_A^2(s)} \ , \quad x,y,s>0.
\ee
We see from (\ref{C4}) that if (\ref{A4}) holds then $\lim_{s\ra\infty}\la(x,y,s)=-1$.  The solution to (\ref{B4}) when $\la(\cdot,\cdot,\cdot)\equiv-1$ is given by $q_0(x,y,T)=2x$ and the optimal exit time is $\tau=\tau(x,y,T)=T-x$.  We therefore expect that $\lim_{T\ra\infty} q_0(x,y,T)=2x$ for all $x,y>0$.
The situation is however more subtle than we just described because it is possible that the minimizing trajectory $x_{\rm min}(s), \ \tau<s<T,$  for (\ref{B4}) could have $T-\tau$ large. In that case $x_{\rm min}(\cdot)$  is approximately the integral curve for the vector field $\la(\cdot,\cdot,\cdot)$ with terminal condition $x_{\rm min}(T)=x$ for a long time, but then close to time $\tau$ the trajectory $x_{\rm min}(\cdot)$ exits the positive half line with small cost.    We shall show that the conditions (\ref{A4}) rule out this possibility. 
\begin{proposition}
Assume  the function $A:[0,\infty)\ra\mathbb{R}$ is continuous positive decreasing and (\ref{A4}) holds. Then for all $x,y>0$ one has $\lim_{T\ra\infty} \frac{q_0(x,y,T)}{2x}=1$. 
In addition the limit is uniform in any region $0<x<M,  \ y_0<y<y_\infty$, where $M>0$ and $0<y_0<y_\infty<\infty$. 
\end{proposition}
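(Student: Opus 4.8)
The plan is to prove matching asymptotics $q_0(x,y,T)=2x\,(1+o(1))$ as $T\to\infty$, uniformly for $0<x<M$, $y_0<y<y_\infty$; throughout I write $\la_s(z)=\la(z,y,s)$. First I would record the consequences of (\ref{A4}) for the drift: since $\sig^2_A(s)\ge s$ and $\sig^2_A(s)\ge m_{2,A}(s)$, one has $A(s)+\sig_A^{-2}(s)\to0$, $m_{2,A}(s)\sig_A^{-2}(s)\to0$, and, from $\sig^2_A/m_{1,A}\ge m_{2,A}/m_{1,A}\to\infty$, also $m_{1,A}(s)\,\sig_A^{-2}(s)\to0$; hence $\la_s(z)\to-1$ as $s\to\infty$, uniformly for $z$ bounded and $y\le y_\infty$, and $\la_s(z)\ge\la_s(0)\ge-1-\del$ for \emph{every} $z\ge0$ once $s$ is large. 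The upper bound is then immediate: test (\ref{B4}) with the affine path $x(s)=x-(T-s)$ on $[T-x,T]$, which is admissible for $T>x$ (here $\dot x\equiv1$ and $0<x(s)<x$ on the open interval); its cost $\tfrac12\int_{T-x}^{T}[1-\la_s(x(s))]^2\,ds$ has integrand tending to $4$ uniformly and the interval has length $x\le M$, so the cost tends to $2x$ uniformly, giving $\limsup_T q_0(x,y,T)/(2x)\le1$.

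For the lower bound I would fix $\del\in(0,\tfrac15)$ small (also $\del<1/(2(M+1))$) and pick $S_0$ so large that $|\la_s(z)+1|\le\del$ for $s\ge S_0$, $0\le z\le M+1$, $y\le y_\infty$, and $\la_s(z)\ge-1-\del$ for all $z\ge0$, $s\ge S_0$; put $M_1=M+1$. The main tool is the elementary inequality that a $C^1$ path on an interval of length $L$ from $a$ to $b$ satisfies $\tfrac12\int(\dot\xi+1)^2\ge(L+b-a)^2/(2L)\ge\max\{\tfrac L2,\,2(b-a)\}$ (the last by $(L-(b-a))^2\ge0$), together with the pointwise bound $(\dot x-\la_s)^2\ge(1-\del)(\dot x+1)^2-\del$ valid wherever $|\la_s+1|\le\del$. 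Hence on any subinterval on which the path stays in $[0,M_1]$ and $s\ge S_0$, the cost is at least $\tfrac{1-\del}{2}\int(\dot x+1)^2-\tfrac\del2(\text{length})\ge\tfrac{1-2\del}{2}\int(\dot x+1)^2$, the length term being absorbed using $\int(\dot x+1)^2\ge L$. Applying this to a minimizing path on $[\tau,T]$: if $\tau\ge S_0$ and $x\le M_1$ throughout, its cost is $\ge\tfrac{1-2\del}{2}\int_\tau^T(\dot x+1)^2\ge 2x(1-2\del)$; if $\tau\ge S_0$ but the path first reaches $M_1$ at $s^*$, the same bound on $[\tau,s^*]$ (endpoints $0$ and $M_1$) gives cost $\ge 2M_1(1-2\del)>2x$. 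So in the late-exit regime $q_0(x,y,T)\ge2x(1-2\del)$.

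The remaining task is to exclude $\tau<S_0$ for large $T$, and here the full strength of (\ref{A4}) enters. If the path stays in $[0,M_1]$ on all of $[S_0,T]$, then already the cost on $[S_0,T]$ is $\ge\tfrac{1-2\del}{2}\int_{S_0}^T(\dot x+1)^2\ge\tfrac{1-2\del}{2}(T-S_0)\to\infty$, so this cannot realize the infimum for $T$ large. Otherwise the path leaves $[0,M_1]$ during $[S_0,T]$, and I would argue the cost still exceeds $2M$ for $T$ large by combining: (i) any ascent at times $\ge S_0$, through heights $\le M_1$, from $0$ up to $M_1$ costs $\ge2M_1$ by the ascent form of the elementary inequality; (ii) on the maximal subinterval $[s_0,T]$ ending at $T$ on which $x\le M_1$, the confined-region bound gives cost $\ge\tfrac{1-\del}{2}(T-s_0+x-x(s_0))^2/(T-s_0)-\tfrac\del2(T-s_0)$, which blows up if $T-s_0\to\infty$ and otherwise forces a terminal plunge from near $M_1$ down to $x$ over the short time $T-s_0$, costing at least $(M_1-x)^2/\bigl(2(T-s_0)\bigr)$; and (iii) the crucial point, that the zero-cost characteristics of $\dot x=\la_s(x)$ do \emph{not} meet the boundary at any positive time — near $s=0$ one has $\sig^2_A(s)\asymp m_{2,A}(s)\asymp s$ and $m_{1,A}(s)\to1$, so $\la_s(z)\approx(z-y)/s$ and the characteristic is driven toward the equilibrium $\approx y\ge y_0>0$, while the strong inward drift $\la_s(z)\le-y_0/(4s)$ (for $z\le y_0/2$, $s$ small) makes detaching from the boundary at a small time $\tau$ cost at least a fixed positive amount. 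Assembling (i)--(iii) into a short case analysis (on whether the ascents to, and descents from, large heights fall before or after $S_0$) shows every $\tau<S_0$ path has cost $>2M\ge2x$ once $T$ is large. Hence for $T$ large only the late-exit regime competes, $q_0(x,y,T)\ge2x(1-2\del)$, and letting $\del\downarrow0$ gives $\liminf_T q_0(x,y,T)/(2x)\ge1$, uniformly.

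The hard part is exactly the early-exit exclusion, i.e.\ step (iii): as flagged before the statement, nothing in the drift alone prevents a competitor from hugging a nearly-free characteristic for a long time and then slipping down to $x=0$ near a small exit time, and it is only the combined effect of (\ref{A4}) — $m_{1,A}\to\infty$, $m_{2,A}/m_{1,A}\to\infty$, $\sig^2_A/m_{2,A}\to\infty$, $\sig^2_A/m_{1,A}^2$ bounded — that makes every such detour cost a definite positive amount, via the growth of the confined cost with elapsed time, the price (of order $y_0^2/\tau$) of detaching near a small time, and the price ($\ge2\times$ height) of any ascent at late times. Carrying this out with the stated uniformity in $x\in(0,M)$ and $y\in[y_0,y_\infty]$ — the hypothesis $y_0>0$ entering precisely through the near-boundary bound $\la_s(z)\le-y_0/(4s)$, and $x<M$ keeping the height thresholds uniform — is where the bulk of the work lies; the upper bound, by contrast, needs nothing beyond the single affine test path.
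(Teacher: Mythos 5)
Your upper bound (test the affine path $x(s)=x-(T-s)$ on $[T-x,T]$) is correct, and is actually more elementary and self-contained than the paper's, which invokes the inequality $q_\ve(x,y,T)\le -2\la(0,y,T)x$ from Proposition 3.3 of \cite{cd}. Your late-exit lower bound ($\tau\ge S_0$) via the confined-cost inequality $\tfrac12\int(\dot\xi+1)^2\ge\max\{L/2,\,2(b-a)\}$ is also sound and is a genuinely different (more hands-on) route than the paper's, which instead builds a $C^1$ solution $q_{\rm char}$ of the Hamilton--Jacobi equation by the method of characteristics in a boundary domain $\mathcal{D}_y(T_1)$, shows $\pa_x q_{\rm char}\to 2$ via (\ref{Q4}), and applies the verification theorem.

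The gap is in the early-exit exclusion, and it is real. Your items (i)--(iii) do not control the cost of the \emph{ascent}: a competitor path can exit at a moderate $\tau\in(0,S_0)$, climb past the unstable zero of $z\mapsto\la(z,y,s)$ at an $O(1)$ cost, and then rise toward the very high values $x_{\rm class}(s,T)$ that the classical trajectory attains for fixed $s$ as $T\to\infty$. Your item (i) ($\ge 2\times$ height) only applies at times $\ge S_0$ and at heights where $\la\approx-1$; at heights $z\gg 1$ the coefficient $A(s)+\sig_A^{-2}(s)$ makes $\la(z,y,s)$ \emph{positive}, so there is no $2\times$ height bound for that portion of the ascent. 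Your item (iii) yields only an $O(y_0^2/\tau)$ cost for detaching near the origin, which for $\tau\sim S_0$ is a fixed constant, not something that exceeds $2M$ for arbitrary $M$. And your item (ii) bounds only the final confined descent. What is missing is precisely the mechanism the paper uses: integrating the forced linear ODE for the deviation $\tilde x(s)=x(s)-x_{\rm class}(s,T)$ and applying Schwarz gives the global quadratic penalty (\ref{X4}),
\[
\mathcal{A}(x(\cdot),x,y,T)\ \ge\ \frac{m_{1,A}(s,T)^2}{2C\,\sig_A^2(s,T)}\bigl[x(s)-x_{\rm class}(s,T)\bigr]^2+q_0(x(s),y,s),
\]
valid for every intermediate $s$. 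Taking $s=T_{2M}$ fixed, one has $x_{\rm class}(T_{2M},T)\to\infty$ while the coefficient stays bounded below by (\ref{A4})(e); combined with the linear lower bound $q_0(z,y,T_{2M})\ge 2m_{1,A}(T_{2M})zy/\sig_A^2(T_{2M})$ from (\ref{Z4}), this forces the cost to $\infty$ whenever $\tau<T_{2M}$, uniformly in $0<x<M$ and $y\ge y_0$. Your outline would need some version of this Schwarz-type penalty (or the equivalent $q_0$ lower bound at an intermediate time) to close the argument; the elementary pointwise bounds alone do not rule out a fast, drift-assisted ascent at early times.
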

 \begin{proof}
  It follows from the upper bound  $q_\ve(x,y,T)\le -2\la(0,y,T)x$, proved in Proposition 3.3 of \cite{cd}, and (\ref{A4})  that
\be \label{AB4}
\limsup_{T\ra\infty} \left[q_\ve(x,y,T)-2x\right]/x \ \le \ 0 \ ,  \quad {\rm uniformly \ for \ } 0<x<M, \ 0<y<y_\infty \ .
\ee
From Theorem 2.1 of \cite{cd} we have that $q_0(x,y,T)=\lim_{\ve\ra0}q_\ve(x,y,T)$, whence (\ref{AB4}) also holds when $\ve=0$. To obtain a lower bound
on $q_0(x,y,T)$ we consider solutions to the equation
\be \label{D4}
\frac{dx(s)}{ds} \ = \ \la(x(s),y,s) \ , \quad s<T, \ x(T)=x \ ,
\ee 
 which we denote by $x_{\rm class}(s,T), \ s<T$.  We have from (2.12) of \cite{cd} the explicit formula
 \begin{multline} \label{E4}
\sig_A^2(T)x_{\rm class}(s,T) \ = \  xm_{1,A}(s,T)\sig_A^2(s) +ym_{1,A}(s)\sig_A^2(s,T) \\
+ \ m_{1,A}(s,T)m_{2,A}(s,T)\sig^2_A(s)-m_{2,A}(s)\sig_A^2(s,T) \ ,
\end{multline}
 where the functions $(s,T)\ra m_{1,A}(s,T), \ m_{2,A}(s,T), \ \sig^2_A(s,T), \ s<T,$ are defined for the interval $[s,T]$ similarly to the corresponding functions
 $T\ra m_{1,A}(T)=m_{1,A}(0,T), \ m_{2,A}(T)=m_{2,A}(0,T), \ \sig^2_A(T)=\sig^2_A(0,T)$ of (\ref{B2}) defined for the interval $[0,T]$.  Evidently one has 
 $\lim_{s\ra T}x_{\rm class}(s,T)=x$ and $\lim_{s\ra 0}x_{\rm class}(s,T)=y$.  We can estimate $x_{\rm class}(s,T)$ in the interval $0<s<T$ using the properties (\ref{A4}) by observing from (\ref{E4}) that
 \begin{multline} \label{F4}
 x_{\rm class}(s,T) \ \ge \ \frac{\sig_A^2(s)}{m_{1,A}(s)^2}
 \left(  \frac{\sig_A^2(T)}{m_{1,A}(T)^2}   \right)^{-1}\frac{m_{2,A}(s,T)}{m_{1,A}(s,T)}  \\
 - \  \frac{1}{m_{1,A}(s)}\frac{\sig_A^2(s,T)}{m_{1,A}(s,T)^2}
 \left(  \frac{\sig_A^2(T)}{m_{1,A}(T)^2}   \right)^{-1}\frac{m_{2,A}(s)}{m_{1,A}(s)} \  , \quad 0<s<T \ .
 \end{multline}
 From (\ref{A4}) (c),(e) the first term on the RHS of (\ref{F4}) goes to $\infty$ as $T\ra\infty$, whereas from (\ref{A4}) (e) the second term converges to a finite number. 
 We conclude that $\lim_{T\ra\infty} x_{\rm class}(s,T) =\infty$. 
 
 We can use the method of characteristics to construct a solution to the Hamilton-Jacobi equation (2.29) of \cite{cd} , corresponding to the variational problem (\ref{B4}), in a neighborhood of the line $\{[x.T]\in\mathbb{R}^2: \ x=0, \ T>0\}$.  The characteristic equation, given by (4.28) of \cite{cd}, is
 \be \label{G4}
\frac{dx(s)}{ds} \ = \ \left[A(s)+\frac{1}{\sig^2_A(s)}\right]x(s) +\frac{m_{1,A}(s)}{\sig^2_A(s)}\left[y+2g_{2,A}(\tau,\tau)-g_{2,A}(s,s)\right] \ , \quad s>\tau, \ x(\tau)=0 \ ,
\ee
where the function $s\ra g_{2,A}(s,s)$ is given by the formula
\be \label{H4}
g_{2,A}(s,s) \ = \ \frac{\sig^2_A(s)-m_{2,A}(s)}{m_{1,A}(s)} \ , \quad s>0 \ .
\ee
 Differentiating (\ref{H4}) we find that
 \be \label{I4}
\frac{d}{ds} g_{2,A}(s,s) \ =  \frac{A(s)\sig^2_A(s)}{m_{1,A}(s)} \ .
\ee
Evidently the function $s\ra g_{2,A}(s,s), \ s>0,$ is non-negative increasing, and from (\ref{A4})  (c), (d) we have $\lim_{s\ra\infty} g_{2,A}(s,s)=\infty$.  Choosing $T_1>0$ so that
$\sig^2_A(s)\ge 2 m_{2,A}(s)$ for $s\ge T_1$,  we see from (\ref{H4}), (\ref{I4})  that
\be \label{J4}
\frac{d}{ds} g_{2,A}(s,s) \ \le  \ 2A(s) g_{2,A}(s,s) \ , \quad s>T_1 \ .
\ee
Integrating (\ref{J4}) we conclude that
 \be \label{K4}
 g_{2,A}(s,s) \ \le \ \exp\left[2\int_T^s A(s') \ ds'\right] g_{2,A}(T,T) \ , \quad {\rm for \ }  s>T\ge T_1 \ .
 \ee
 Since the function $A(\cdot)$ is decreasing it follows from (\ref{G4}), (\ref{K4}) that the characteristic curve $s\ra x_{\rm char}(T,s), \ s>T, \  x_{\rm char}(T,T)=0,$
 is increasing provided $T>T_1$ and $T<s<T+ \log 2/2A(T)$. We may also obtain a lower bound on $x_{\rm char}(T,s)$ by observing from (\ref{G4}), (\ref{K4}) that
 \be \label{L4}
 \frac{d}{ds} x_{\rm char}(T,s) \ \ge \ \frac{m_{1,A}(s)}{3\sig^2_A(s)}g_{2,A}(s,s) \ \ge \  \frac{1}{6} \ , \quad {\rm for \ } T<s<T+ \log (3/2)/2A(T) \ ,  \ T\ge T_1,
 \ee
 whence we conclude that
 \be \label{M4}
  x_{\rm char}(T,s) \ \ge \  \frac{s-T}{6} \ , \quad {\rm if \ } T<s<T+ \log (3/2)/2A(T) \ ,  \ \ T\ge T_1 \ .
 \ee
 
 The first variation equation for the characteristics is obtained by differentiating (\ref{G4}) with respect to $\tau$. This yields the equation
 \begin{multline} \label{N4}
\frac{d}{ds}D_T x_{\rm char}(T,s) \ = \ \left[A(s)+\frac{1}{\sig^2_A(s)}\right]D_T x_{\rm char}(T,s) +\frac{2m_{1,A}(s)}{\sig^2_A(s)}  \frac{A(T)\sig^2_A(T)}{m_{1,A}(T)} \ , \ s>T \ , \\
D_Tx_{\rm char}(T,s)\Big|_{s=T} \ = \ -\frac{m_{1,A}(T)}{\sig^2_A(T)}\left[y+g_{2,A}(T,T)\right] \ .
\end{multline}
Using the fact that the function $s\ra m_{1,A}(s)/\sig^2_A(s), \ s>0,$ is decreasing we have from (\ref{N4}) that
\begin{multline} \label{O4}
\frac{d}{ds}D_T x_{\rm char}(T,s) \ \le \ 2A(T) \quad {\rm if \ } s>T \quad {\rm and \ }  D_T x_{\rm char}(T,s) \ \le \ 0, \\
 D_T x_{\rm char}(T,s)\Big|_{s=T} \ \le \ -\frac{1}{2} \ , \quad {\rm if \ } T\ge T_1 \ .
\end{multline}
It follows from (\ref{O4})  that $D_T x_{\rm char}(T,s)<0$ if $T<s<T+1/4A(T), \ T>T_1$.  We define a domain $\mathcal{D}_y(T_1)$ by
\be \label{P4}
\mathcal{D}_y(T_1)  \ = \ \{[x,s] : \ x= x_{\rm char}(T,s), \ T<s<T+1/5A(T), \ T>T_1 \ \} \ .
\ee
Letting $\mathcal{U}(T_1)=\{[s,T]: \ T<s<T+1/5A(T), \ T>T_1 \ \}$, we have shown that the mapping $\mathcal{U}(T_1)\ra \mathcal{D}_y(T_1)$
defined by $[s,T]\ra [x_{\rm char}(T,s),s]$ is a diffeomorphism. 

One can use the method of characteristics to construct a $C^1$ solution $q_{\rm char}(x,y,T),  \ [x,T]\in \mathcal{D}_y(T_1)$, of the Hamilton-Jacobi equation
in the domain  $\mathcal{D}_y(T_1)$. From (4.26) of \cite{cd} we have the formula
\be \label{Q4}
\frac{\pa q_{\rm char}(x_{\rm char}(T,s),y,s)}{\pa x} \ = \ \frac{2m_{1,A}(s)[y+g_{2,A}(T,T)]}{\sig^2_A(s)}  \ , \quad s>T \ .
\ee
In view of (\ref{A4}) (a) and (\ref{M4}), we see that for any $M>0$ there exists $T_M\ge T_1$ such that  $\mathcal{D}_y(T_1)$ contains the infinite rectangle
$\{[x,T]: \ 0<x<M, \ T>T_M\}$.  Furthermore, (\ref{A4}) (a), (b), (d), (e) and (\ref{M4}), (\ref{Q4}) imply for any $y_\infty>0$  that
\be \label{R4}
\lim_{T\ra\infty} \frac{\pa q_{\rm char}(x,y,T)}{\pa x} \ = \ 2 \ ,  \quad {\rm uniformly \ for \ } 0<x<M, \ 0<y<y_\infty \ .
\ee
As a consequence of (\ref{R4}) we have that $\lim_{T\ra\infty} q_{\rm char}(x,y,T)/2x=1$ for all $x,y>0$.  Let $x(s), \ \tau< s\le T,$ be a path in $\mathbb{R}^+$ such that $x(T)=x$ and with first exit time $\tau>0$ from $\mathbb{R}^+$. The associated Lagrangian $\mathcal{L}$ and action integral $\mathcal{A}$  are given by the expressions
\begin{multline} \label{S4}
 \mathcal{L}(x(\cdot), x,y,s,T)  \ = \ \frac{1}{2}\left[\frac{dx(s)}{ds}-\la(x(s),y,s)\right]^2 \ ,  \quad \tau<s<T \ , \\
  \mathcal{A}(x(\cdot), x,y,T) \ = \ \int_\tau^T  \mathcal{L}(x(\cdot), x,y,s,T) \ ds  \ .
\end{multline}
The usual verification theorem (see Proposition 4.2 of \cite{cd})  implies that if the path $s\ra [x(s),s],  \ \tau<s\le T,$ lies in $\mathcal{D}_y(T_1)$ then 
$\mathcal{A}(x(\cdot), x,y,T)\ge q_{\rm char}(x,y,T)$. Suppose now that $T>T_M$ and  the path exits  $\mathcal{D}_y(T_1)$, but then reenters  $\mathcal{D}_y(T_1)$ at time $\tau^*>T_M$ and  remains in $\mathcal{D}_y(T_1)$, until it exits $\mathbb{R}^+$. In that case we have the inequality  $\mathcal{A}(x(\cdot), x,y,T)\ge q_{\rm char}(M,y,\tau^*)$.

Next we wish to obtain a lower bound on the action in the case $0<\tau<T_1$.  We observe that the path $x(s), \ \tau\le s\le T,$ is the solution to the terminal value problem
  \be \label{T4}
  \frac{dx(s)}{ds} \ = \ \la(x(s),y,s)-f(s) \ , \quad \tau<s\le T \ ,   \ x(T)=x, \ \frac{1}{2} f(s)^2 \ = \  \mathcal{L}(x(\cdot), x,y,s,T) \ .
 \ee
 It follows from (\ref{D4}) that $\tilde{x}(s)=x(s)-x_{\rm class}(s,T)$ is a solution to the terminal value problem 
 \be \label{U4}
  \frac{d\tilde{x}(s)}{ds} \ = \ \left[A(s)+\frac{1}{\sig^2_A(s)}\right]\tilde{x}(s)-f(s) \ , \quad  \tau<s<T, \ \tilde{x}(T)=0 \ .
 \ee
 Integrating (\ref{U4}) we obtain the integral formula
 \be \label{V4}
 \tilde{x}(s) \ = \  \frac{\sig_A^2(s)}{m_{1,A}(s)} \int_s^T \frac{m_{1,A}(s')}{\sig^2_A(s')}f(s') \ ds' \  .
 \ee
 Applying the Schwarz inequality in (\ref{V4}) we have from (\ref{A4}) (e) and (\ref{T4}) that
 \be \label{W4}
 \tilde{x}(s)^2 \ \le \  C\left[\int_{s}^T\frac{|f(s)|}{m_{1,A}(s,s')} \ ds' \right]^2 \ \le \ 
 2C\frac{\sig_A^2(s,T)}{m_{1,A}(s,T)^2} \int_{s}^T \mathcal{L}(x(\cdot), x,y,s',T)  \ ds' \  ,
 \ee
  for some constant $C$.  It follows from (\ref{B4}), (\ref{S4}), (\ref{W4}) that 
  \be \label{X4}
  \mathcal{A}(x(\cdot), x,y,T)  \ \ge \ \frac{m_{1,A}(s,T)^2}{2C\sig_A^2(s,T)}[x(s)-x_{\rm class}(s,T)]^2+q_0(x(s),y,s) \ , \quad \tau<s<T \ .
  \ee
  
  We wish to show that
\be \label{Y4}
\liminf_{T\ra\infty} \frac{q_0(x,y,T)}{2x} \ \ge \ 1 \ ,  \quad {\rm uniformly \ for \ } 0<x<M, \ y_0<y<y_\infty \ .
\ee
To do this we first note the lower bound from Proposition 4.1 of \cite{cd},
\be \label{Z4}
q_0(x,y,T) \ \ge \ \ \frac{2m_{1,A}(T)xy}{\sig^2_A(T)} \ , \quad x,y,T>0 \ .
\ee 
Consider a path $x(s), \ \tau<s<T,$ with $x(T)=x$ and first exit time $\tau$, where $\tau<T_{2M}<T$. Taking $s=T_{2M}$ in (\ref{X4}) and recalling that $\lim_{T\ra\infty} x_{\rm class}(T_{2M},T)=\infty$,  it follows  from (\ref{A4}) (e) and (\ref{X4}), (\ref{Z4}) that there exists $T^*_{2M}>T_{2M}$ such that
\be \label{AA4}
 \mathcal{A}(x(\cdot), x,y,T)  \ \ge  \ 2x \quad {\rm for \ } T>T^*_{2M},  \ 0<x<M, \ y>y_0 \ .
\ee
 We have already seen that the method of characteristics yields a lower bound on the action in the case $\tau>T_{2M}$. Combining that with (\ref{AA4}) implies (\ref{Y4}). We have already established the upper bound $\limsup_{T\ra\infty} q_0(x,y,T)/2x \ \le \ 1 $ in (\ref{AB4}). It also follows from the inequality $q_0(x,y,T)\le q_{\rm char}(x,y,T)$ and (\ref{R4}). 
 \end{proof}
 Next we  show that the result of Proposition  5.1 extends to the case $\ve>0$.
 We have already obtained an upper bound in (\ref{AB4}),so we just need to
obtain a lower bound on $q_\ve(x,y,T)$. We consider solutions $X_\ve(\cdot)$ to the SDE
\be \label{AC4}
dX_\ve(s) \ = \ \mu_\ve(X_\ve(s),y,s) \ ds+\sqrt{\ve}dB(s)\  , 
\ee
run {\it backwards} in time with controller $\mu_\ve$ and given terminal data. The optimal controller for the stochastic control problem corresponding to the function $[x,T]\ra q_\ve(x,y,T)$ is
\be \label{AD4}
\mu^*_\ve(x,y,T) \ = \ 
\la(x,y,T)+\frac{\pa q_\ve(x,y,T)}{\pa x} \ , \quad x,T>0 \ .
\ee
Letting $X^*_\ve(\cdot)$ be solutions to (\ref{AC4}) with $\mu_\ve=\mu^*_\ve$, we have from Lemma 2.1 and Lemma 2.3 of \cite{cd} the identity
\be \label{AE4}
q_\ve(x,y,T) \ = \ E\left[  \frac{1}{2}\int_{\tau^*_{\ve,x,T}}^T    \left[\mu^*(X^*_\ve(s),y,s)-\la(X^*_\ve(s),y,s)\right]^2 \ ds  \ \Big| \ X^*_\ve(T)=x   \right] \ ,
\ee
where $\tau^*_{\ve,x,T}$ is the first exit time of $X^*_\ve(s), \ s<T,$ with $X^*_\ve(T)=x$ from the  half line $(0,\infty)$.  Lemma 2.3 of \cite{cd} also establishes that $\tau^*_{\ve,x,T}>0$ with probability $1$. 

We generalize the lower bound (\ref{X4}) on the action. To do this we denote by $X_{\ve,{\rm class}}(s,T), \ s<T,$ the solution to (\ref{AC4}) with $\mu_\ve=\la$ and terminal condition $X_{\ve,{\rm class}}(T,T)=x$. The SDE (\ref{AC4}) is then linear and may be explicitly solved, whence we have that
\be \label{AF4}
X_{\ve,{\rm class}}(s,T) \ = \ x_{\rm class}(s,T)-\sqrt{\ve}\frac{\sig^2_A(s)}{m_{1,A}(s)}Z(s) \ , \quad {\rm with \ } Z(s)=\int_s^T\frac{m_{1,A}(s') \ dB(s')}{\sig^2_A(s')} \ , 
\ee
where $x_{\rm class}(s,T)$ is given by (\ref{E4}).  Since the function $x\ra q_\ve(x,y,T)$ is increasing it follows from (\ref{AD4}) that $\mu^*(x,y,s)\ge \la(x,y,s), \ x,y,s>0$, whence  $X_{\ve,{\rm class}}(s,T)>0$ for $\tau^*_{\ve,x,T}<s<T$. Integrating (\ref{AC4}) over an interval $[s,T]$ we have similarly to (\ref{U4}), (\ref{V4})  the identity
\begin{multline} \label{AG4}
X_{\ve,{\rm class}}(s,T)-X^*_\ve(s) \ = \\ \frac{\sig^2_A(s)}{m_{1,A}(s)}\int_s^T \frac{m_{1,A}(s')}{\sig^2_A(s')}  \left[\mu^*(X^*_\ve(s'),y,s')-\la(X^*_\ve(s'),y,s')\right] \ ds' \ , \quad \tau^*_{\ve,x,T}<s<T \ .
\end{multline}
Let $\tau_T$ be a stopping time for the diffusion (\ref{AC4}), run backwards in time with terminal time $T$, such that $\tau_T>\tau^*_{\ve,x,T}$. Applying the Schwarz inequality in (\ref{AG4}), we conclude from (\ref{AE4}) that
\begin{multline} \label{AH4}
q_\ve(x,y,T) \ \ge \ E\left[ q_\ve(X^*_\ve(\tau_T),y,\tau_T) \ \Big| \ X^*_\ve(T)=x\right] \\
+\frac{\sig^2_A(T)}{2}E\left[ \frac{\left\{X_{\ve,{\rm class}}(\tau_T,T)-X^*_\ve(\tau_T)\right\}^2}{\sig^2_A(\tau_T)\sig^2_A(\tau_T,T)} \ \Big| \ X^*_\ve(T)=x\right]  \ ,
\end{multline}
where we have used the identity
\be \label{AI4}
\int_s^T\frac{m_{1,A}(s')^2 \ ds'}{\sig^4_A(s')} \ = \ \frac{m_{1,A}(s)^2\sig^2_A(s,T)}{\sig^2_A(T)\sig^2_A(s)} \ .
\ee

In order to proceed further we first consider a simpler problem in which the function $\la$ is replaced by a constant $-k$ with $k>0$.  We see from (\ref{O3}) and
(2.18), (2.19) of \cite{cd} that the function $v_\ve$, defined by
\be \label{AJ4}
v_\ve(x,y,T) \ = \ 1-K_{\ve,D}(x,y,0,T) \ = \ \exp\left[-\frac{q_\ve(x,y,T)}{\ve}\right] \ ,
\ee
is a solution to the PDE
\be \label{AK4}
\frac{\pa v_\ve(x,y,T)}{\pa T} \ = \ -\la(x,y,T)\frac{\pa v_\ve(x,y,T)}{\pa x}+\frac{\ve}{2}\frac{\pa^2 v_\ve(x,y,T)}{\pa x^2} \ ,
\ee
with boundary condition  $\lim_{x\ra 0}v_\ve(x,y,T)=1$. 
Replacing the function $\la$ in (\ref{AK4})  by the constant  $-k$, we are interested in solutions $(x,T)\ra v_\ve(x,T)$ to the Dirichlet boundary value problem
\be \label{AL4}
\frac{\pa v_\ve(x,T)}{\pa T} \ = \ k\frac{\pa v_\ve(x,T)}{\pa x}+\frac{\ve}{2}\frac{\pa^2 v_\ve(x,T)}{\pa x^2} \ , \ \ x,T>0, \quad v_\ve(0,T)=1, \ T>0 \ .
\ee
As in (\ref{AJ4}) we define a function $(x,T)\ra q_\ve(x,T)$ by
\be \label{AM4}
v_\ve(x,T) \ = \  \exp\left[-\frac{q_\ve(x,T)}{\ve}\right] \ , \quad x,T>0 \ .
\ee
Observe that if we set $q_\ve(x,T)=2kx$ in (\ref{AM4}) then $(x,T)\ra v_\ve(x,T)$ is a solution to (\ref{AL4}). 
 \begin{proposition} 
 Let the function $(x,T)\ra v_\ve(x,T)$ be the solution to the boundary value problem (\ref{AL4})  with   initial data $v_\ve(x,0)=\exp[-q_\ve(x,0)/\ve], \ x>0$, and assume there is a non-negative function $f:\mathbb{R}^+\ra\mathbb{R}$, independent of $\ve$, with the property that $\lim_{x\ra\infty}f(x)=\infty$ and $q_\ve(x,0)\ge f(x),  \ x\ge 0$.  Then the function $q_\ve$ defined by (\ref{AM4}) has the property 
$\lim_{T\ra\infty} \left[q_\ve(x,T)-2kx\right]/x= 0$ if $x>0$. 
In addition for any $M,\ve_0>0$, the limit is uniform in the region $0<x\le M, \ 0<\ve\le \ve_0$.
\end{proposition}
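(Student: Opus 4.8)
The plan is to compare $v_\ve$ with the explicit stationary solution $(x,T)\mapsto e^{-2kx/\ve}$ of (\ref{AL4}) (the one obtained by putting $q_\ve(x,T)=2kx$ in (\ref{AM4})) and to show that $v_\ve$ relaxes to it as $T\ra\infty$.  The probabilistic picture makes the mechanism transparent.  Let $X_s$ solve $dX_s=k\,ds+\sqrt{\ve}\,dB_s$ with $X_0=x$, $B$ a standard Brownian motion, and let $\tau_0$ be its first hitting time of $0$; then $v_\ve(x,T)=E[v_\ve(X_T,0);\,\tau_0>T]+P(\tau_0\le T)$, and since the drift is positive one has the classical scale-function identity $P(\tau_0<\infty)=e^{-2kx/\ve}$, so $e^{-2kx/\ve}=\lim_{T\ra\infty}P(\tau_0\le T)$.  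Setting $w(x,T)=v_\ve(x,T)-e^{-2kx/\ve}$, which solves (\ref{AL4}) with zero Dirichlet data, one obtains
\be
w(x,T) \ = \ E[v_\ve(X_T,0);\,\tau_0>T]-P(T<\tau_0<\infty) \ .
\ee
Since $q_\ve(x,T)-2kx=-\ve\log\!\big(1+e^{2kx/\ve}w(x,T)\big)$ and $|\log(1+u)|\le2|u|$ for $|u|\le\tfrac12$, the proposition reduces to a single \emph{relative} estimate: for every $\epsilon>0$ there is $T_0$ such that $|e^{2kx/\ve}w(x,T)|\le\tfrac12\min(1,\epsilon x/\ve)$ whenever $T\ge T_0$, $0<x\le M$, $0<\ve\le\ve_0$.  (For $x\ge\ve/\epsilon$ only $|e^{2kx/\ve}w|\le\tfrac12$ is needed, which gives $|q_\ve(x,T)-2kx|\le\ve\le\epsilon x$; the work is in the range $x<\ve/\epsilon$, where $e^{2kx/\ve}\le e^{2k/\epsilon}$ is a bounded factor.)

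I would bound the two pieces of $e^{2kx/\ve}w$ separately.  For the second, the Markov property at time $T$ gives $P(T<\tau_0<\infty)=E[e^{-2kX_T/\ve};\,\tau_0>T]$, hence $e^{2kx/\ve}P(T<\tau_0<\infty)=\tilde P(\tau_0>T)$, the survival probability of the drift-reversed diffusion $dX_s=-k\,ds+\sqrt{\ve}\,dB_s$ from $x$ killed at $0$.  I would then use the explicit Dirichlet Green's function of a Brownian motion with constant drift $b$ on $(0,\infty)$, $G^{(b)}(x,z,T)=e^{-b(x-z)/\ve-b^2T/(2\ve)}(2\pi\ve T)^{-1/2}\big(e^{-(x-z)^2/(2\ve T)}-e^{-(x+z)^2/(2\ve T)}\big)$, which comes from the half-line heat kernel via the substitution removing the drift, together with the elementary bound $e^{-(x-z)^2/(2\ve T)}-e^{-(x+z)^2/(2\ve T)}\le(2xz/\ve T)\,e^{-(x-z)^2/(2\ve T)}$; a Gaussian integration in $z$ then gives $\tilde P(\tau_0>T)\le\big(2x/\sqrt{2\pi\ve T}\big)\exp[-(kT-x)^2/(2\ve T)]$.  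For $0<x\le M$ the prefactor $(2\pi\ve T)^{-1/2}e^{-(kT-M)^2/(2\ve T)}$ has the form $\ve^{-1/2}e^{-c/\ve}$ with $c=(kT-M)^2/(2T)\ra\infty$, so it tends to $0$ as $T\ra\infty$ uniformly in $\ve\le\ve_0$, whence this piece is $\le\epsilon x/(4\ve)$ and also $<\tfrac14$ once $T$ is large.  For the first piece, write $E[v_\ve(X_T,0);\tau_0>T]=\int_0^\infty G^{(k)}(x,z,T)\,v_\ve(z,0)\,dz$ and use $v_\ve(z,0)=e^{-q_\ve(z,0)/\ve}\le e^{-\tilde f(z)/\ve}$, where $\tilde f(z)=\inf_{s\ge z}f(s)$ is the nondecreasing envelope of $f$ (still $\ra\infty$).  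Multiplying by $e^{2kx/\ve}$ and using the same boundary bound on $G^{(k)}$, after completing the square the resulting Gaussian has its mass near $z\sim kT\ra\infty$, where $e^{-\tilde f(z)/\ve}$ is tiny; splitting the $z$-integral at a fixed level $R$ with $\tilde f(R)$ large, the tail $z>R$ contributes $\le\epsilon x/(8\ve)$ (uniformly in $\ve\le\ve_0$, since $e^{-\tilde f(R)/\ve}\le e^{-\tilde f(R)/\ve_0}$) while the bulk $z\le R$ contributes a quantity comparable to $P\big(Z>(kT-R)/\sqrt{\ve T}\big)$, hence $\le\epsilon x/(8\ve)$ once $T$ is large; a cruder version of the same split, in which the $e^{2kx/\ve}$ factor is absorbed and then beaten by $e^{-\tilde f(R)/\ve}$ (using $\tilde f(R)>2kM$) on the tail or by the Gaussian time-decay on the bulk, gives this piece $<\tfrac14$ in the full range $0<x\le M$.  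Adding the two pieces yields the required relative estimate, and taking $M=x$ recovers the pointwise assertion.

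I expect the main obstacle to be precisely this uniformity as $x\ra0$ (and $\ve\ra0$).  Near the boundary $v_\ve\approx1$, so an \emph{additive} closeness $|v_\ve(x,T)-e^{-2kx/\ve}|\le\eta$ says nothing about $q_\ve(x,T)/x$; one genuinely needs $|e^{2kx/\ve}w(x,T)|=O(x/\ve)$, which forces the use of the boundary (Hopf-type) decay $G_D(x,z,T)\le(2xz/\ve T)\times(\text{bulk})$ of the Dirichlet Green's function rather than a plain maximum-principle comparison with scaled copies of $e^{-2kx/\ve}$.  A second point worth flagging is that, because $f$ (hence $q_\ve(\cdot,0)$) may grow sublinearly, \emph{no} exponential supersolution dominates the initial data, so a direct sub/supersolution argument is unavailable; the probabilistic transport argument above sidesteps this by exploiting that the drift $+k$ carries the process into the region where $f\ra\infty$ makes $v_\ve(\cdot,0)$ negligible.
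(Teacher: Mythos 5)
Your proposal is correct and follows essentially the same strategy as the paper's first proof of Proposition 5.2: you use the decomposition of $v_\ve$ into the boundary-absorption term and the initial-data term (which is exactly (\ref{AN4}), restated probabilistically as $v_\ve(x,T)=E[v_\ve(X_T,0);\,\tau_0>T]+P(\tau_0\le T)$), the identity $P(\tau_0<\infty)=e^{-2kx/\ve}$ (which is (\ref{AP4})), and the Hopf-type factor $1-e^{-2xz/\ve t}\le 2xz/\ve t$ in the explicit kernel (\ref{AO4}) together with a split of the $z$-integral keyed to the growth of $f$, which reproduces the estimates (\ref{AT4})--(\ref{AY4}). The probabilistic packaging is a pleasant variation: the observation that $e^{2kx/\ve}P(T<\tau_0<\infty)=\tilde P(\tau_0>T)$ for the drift-reversed diffusion (a Doob $h$-transform, visible at the level of the kernel as $e^{2k(x-z)/\ve}G^{(k)}=G^{(-k)}$) makes the reduction to a single relative bound $|e^{2kx/\ve}w(x,T)|\le\tfrac12\min(1,\epsilon x/\ve)$ quite transparent, and the two remarks you flag at the end --- that additive closeness of $v_\ve$ to $e^{-2kx/\ve}$ is useless near $x=0$ and that the Hopf boundary decay of the Dirichlet kernel is what saves the $x\to0$ uniformity, and that no exponential supersolution can dominate the initial data when $f$ is sublinear --- identify precisely the obstacles the paper's argument is designed to overcome. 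One minor numerical slip: your claimed Gaussian tail bound $\tilde P(\tau_0>T)\le(2x/\sqrt{2\pi\ve T})e^{-(kT-x)^2/2\ve T}$ drops a factor of order $\ve T/(kT-x)^2$ compared to what the integration actually gives, but since you only use it as an upper bound vanishing like $e^{-cT/\ve}$ uniformly on $\ve\le\ve_0$ this does not affect the argument.
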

 \begin{proof}
 The solution to (\ref{AL4}) with initial data $v_\ve(\cdot,0)$ has the representation
 \be \label{AN4}
 v_\ve(x,T) \ = \  \frac{\ve}{2}\int_0^T\frac{\pa G_{\ve,D}(x,0,t)}{\pa x'}  \ dt+\int_0^\infty G_{\ve,D}(x,x',T) v_\ve(x',0) \ dx' \ , \quad x,T>0 \ ,
 \ee
 where $G_{\ve,D}$ is the Dirichlet Green's function for the PDE (\ref{AL4}).  In the case of (\ref{AL4}) there is an explicit formula for $G_{\ve,D}$,
 \be \label{AO4}
G_{\ve,D}(x,x',t) \ = \ \frac{1}{\sqrt{2\pi\ve t}}\exp\left[-\frac{(x-x'+kt)^2}{2\ve t}\right]\left\{1-\exp\left[-\frac{2xx'}{\ve t}\right]\right\} \ .
\ee
We first observe that
\be \label{AP4}
 \frac{\ve}{2}\int_0^\infty\frac{\pa G_{\ve,D}(x,0,t)}{\pa x'}  \ dt \ = \ \exp\left[-\frac{2kx}{\ve}\right] \ .
\ee
In fact the LHS of (\ref{AP4}) is given by
\begin{multline} \label{AQ4}
\int_0^\infty  \frac{x}{t^{3/2}\sqrt{2\pi\ve} }\exp\left[-\frac{(x+kt)^2}{2\ve t}\right] \ dt \\
= \ \exp\left[-\frac{kx}{\ve}\right] \int_0^\infty  \frac{x}{t^{3/2}\sqrt{2\pi\ve} }\exp\left[-\frac{x^2}{2\ve t}-\frac{k^2t}{2\ve}\right] \ dt
 \ = \exp\left[-\frac{kx}{\ve}\right]2\sqrt{z}\int_0^\infty sg(s)e^{-zs} \ ds \ ,
\end{multline}
where $z, g(\cdot)$ are given by the formulae
\be \label{AR4}
z \ = \ \frac{k^2x^2}{\ve^2}  , \quad g(s) \ = \ \frac{1}{2\sqrt{\pi}s^{3/2}}\exp\left[-\frac{1}{4s}\right] \ .
\ee
Now (\ref{AP4}) follows since the Laplace transform $\mathcal{L}g(z)=\exp[-\sqrt{z}]$.

To obtain an upper bound on $\limsup_{T\ra\infty} q_\ve(x,T)/x$ we show that for any $\del>0$ there exists $T_\del>0$ such that
\be \label{AS4}
v_\ve(x,T) \ \ge \   \exp\left[-\frac{2kx(1+\del)}{\ve}\right]  \quad {\rm if \ } 0<x\le M,  \ 0<\ve\le \ve_0, \ T\ge T_\del \ .
\ee
The inequality (\ref{AS4}) follows from (\ref{AP4})  and the inequality
\be \label{AT4}
 \frac{\ve}{2}\int_T^\infty    \frac{\pa G_{\ve,D}(x,0,t)}{\pa x'}    \ dt         \ \le \ \left(\frac{2\ve}{\pi}\right)^{1/2}\frac{x}{k^2T^{3/2}} \exp\left[-\frac{k^2T}{2\ve}\right]  \ .
\ee
To obtain a lower bound on  $\liminf_{T\ra\infty} q_\ve(x,T)/x$,  we show that for any $\del>0$ there exists $T_\del>0$ such that
\begin{multline} \label{AU4}
\int_0^\infty G_{\ve,D}(x,x',T)v_\ve(x',0) \ dx'  \ \le \ e^{-2kx/\ve}\left[e^{\del x/\ve}-1\right]  \ = \ e^{-(2k-\del)x/\ve}\left[1-e^{-\del x/\ve}\right] \\
\le \ \frac{\del x}{\ve} e^{-(2k-\del)x/\ve} \quad  {\rm if \ } T\ge T_\del, \ 0<x\le M, \ 0<\ve\le \ve_0 \ .
\end{multline}
To prove (\ref{AU4}) we first observe there exists $T_0>0$ such that
\be \label{AV4}
\inf_{x'>0}\left[ \frac{(x-x'+kT)^2}{2 T}+f(x')\right] \ \ge 2kM \quad {\rm for \ all  \ } x>0,  \ \ {\rm if \ } T\ge T_0 \ .
\ee
Evidently $T_0$ must satisfy the inequalities
\be \label{AW4}
T_0 \ \ge \  \frac{4M}{k} \ , \quad f(kT_0) \ \ge \ 2kM \ .
\ee
From (\ref{AO4}), (\ref{AV4}) we see that the LHS of (\ref{AU4}) is bounded above by 
\be \label{AX4}
e^{-(2k-\del)M/\ve}  \frac{1}{\sqrt{2\pi\ve T}}\int_0^\infty dx'  \ \frac{2xx'}{\ve T}\exp\left[-\frac{\del}{2k\ve}\left\{\frac{(x-x'+kT)^2}{2 T}+f(x')\right\}\right]  \ ,
\ee
provided $T\ge T_0$. 
It follows from (\ref{AU4}), (\ref{AX4}) it is sufficient to choose $T_\del\ge T_0$ such that
\be \label{AY4}
\frac{1}{\sqrt{2\pi\ve T}}\int_0^\infty dx'  \ \frac{2x'}{T}\exp\left[-\frac{\del}{2k\ve}\left\{\frac{(x-x'+kT)^2}{2 T}+f(x')\right\}\right] \ \le \ \del  \ 
\ee
if $T\ge T_\del$.  Evidently $T_\del$ may be chosen uniformly for $0<x\le M, \ 0<\ve\le \ve_0$. 
  \end{proof}
  The proof of Proposition 5.2 depends heavily on using the explicit formula (\ref{AO4}) for the Dirichlet Green's function.   No such formula exists in the general case of the drift (\ref{C4}).  We therefore give an alternative proof of Proposition 5.2, using the representation of $q_\ve(x,T)$ as the cost function of a stochastic control problem. We will then generalize this approach to the case of the drift $\la$ of (\ref{C4}). 
  \begin{proof}[Proof of Proposition 5.2 upper bound]
  
  As in the proof of Lemma 2.1 of \cite{cd}, we have the inequality
 \begin{multline} \label{AZ4}
q_\ve(x,T) \ \le \ E\left[  \frac{1}{2}\int_{\tau_{\ve,x,T}\vee 0}^T    \left[\mu_\ve(X_\ve(s),s)+k\right]^2 \ ds  \ \Big| \ X_\ve(T)=x   \right] \\
+  E\left[  q_\ve(X_\ve(\tau_{\ve,x,T}\vee 0),\tau_{\ve,x,T}\vee 0)  \ \Big| \ X_\ve(T)=x  \right] \ .
\end{multline}
  where $X_\ve(s), \ s\le T,$ is the solution to the SDE (\ref{AC4}), and $\tau_{\ve,x,T}$ is the  first exit time of $X_\ve(\cdot)$ from the half line $(0,\infty)$.   We define the drift $\mu_\ve$ by
  \be \label{BA4}
  \mu_\ve(x,s) \ = \ k \ \ {\rm if \ } 1<s<T, \quad  \mu_\ve(x,s) \ = \ \frac{x}{s} \ \ {\rm if \ } 0<s<1 \ .
  \ee
Arguing as in the proof of Lemma 2.1 of \cite{cd} , we see that $\tau_{\ve,x,T}>0$ with probability $1$.   Hence the second term on the RHS of (\ref{AZ4}) is zero. 
It follows then from (\ref{AZ4}) that
\be \label{BB4}
q_\ve(x,T) \ \le 2k^2E[T-\tau^a_{\ve,x,T}]+E\left[  \frac{1}{2}\int_{\tau_{\ve,x,T}\wedge 1}^1    \left[\mu_\ve(X_\ve(s),s)+k\right]^2 \ ds  \ \Big| \ X_\ve(T)=x   \right] \ ,
\ee
  where $\tau^a_{\ve,x,T}$ is the first exit time from the half line $(0,\infty)$ for the diffusion with the constant drift $k$. It is easy to see that $E[T-\tau^a_{\ve,x,T}]=x/k$.  To estimate the second term on the RHS of (\ref{BB4})  we consider a diffusion run backwards in time from $s=1$ conditioned on $X_\ve(1)=x$, which satisfies the SDE (\ref{AC4}) with $\mu_\ve(x,s)=x/s$.  The solution to (\ref{AC4}) is then given by the formula
  \be \label{BC4}
  X_\ve(s) \ = \ s\left[x-\sqrt{\ve}Z(s)\right], \quad Z(s)=\int_s^1 \frac{dB(s')}{s'} \ .
  \ee
  Let $\tau_{\ve,x}<1$ be the first exit time from $(0,\infty)$ for $X_\ve(\cdot)$ with $X_\ve(1)=x$. Then using (\ref{BC4}) and arguing as in  the proof of Lemma 2.2 of \cite{cd}, we obtain   the identity
  \begin{multline} \label{BD4}
  E\left[  \frac{1}{2}\int_{\tau_{\ve,x}}^1    \left[X_\ve(s)/s+k\right]^2 \ ds  \ \Big| \ X_\ve(1)=x   \right] \ \\
  = \  \frac{(x+k)^2}{2}E[1-\tau_{\ve,x}]+x(x+k)E[\tau_{\ve,x}]
  +\frac{\ve}{2}E\left[ \int_{\tau_{\ve,x}}^1Z(s)^2 \ ds\right] \ .
  \end{multline}
  Following the argument of \cite{cd}, we see that 
  \be \label{BE4}
  E\left[ \int_{\tau_{\ve,x}}^1Z(s)^2 \ ds\right] \ \le \ C\left[ 1+|\log\ve|+x\right] \ ,
  \ee
 with constant $C$ independent of $\ve,x$ for $x>0, \ 0<\ve\le\ve_0$. Since $0<\tau_{\ve,x}<1$ one obtains from (\ref{BD4}), (\ref{BE4}) an upper bound on the LHS of (\ref{BD4}), which we denote by $F_\ve(x)$.  It follows then from (\ref{BB4}), noting that (\ref{AO4}) with $k$ replaced by $-k$ and $t=T-1$ gives the distribution  of $X_\ve(1)$ on paths $X_\ve(s)$ with $\tau_{\ve,x,T}<1$, that
  \begin{multline} \label{BF4}
  \frac{q_\ve(x,T)}{x} \ \le \ 2k+ \\
  \frac{1}{\sqrt{2\pi\ve (T-1)}}\int_0^\infty \exp\left[-\frac{\{x-x'-k(T-1)\}^2}{2\ve (T-1)}\right] \frac{1}{x}\left\{1-\exp\left[-\frac{2xx'}{\ve (T-1)}\right]\right\} F_\ve(x') \ dx'  \ .
  \end{multline} 
  The integral in (\ref{BF4}) converges to $0$ as $T\ra\infty$, uniformly for $0<\ve\le \ve_0, \ 0<x\le M$. We conclude that $\limsup_{T\ra\infty} q_\ve(x,T)/x\le 2k$ and the limit is uniform for $0<\ve\le \ve_0, \ 0<x\le M$.
  \end{proof}
  \begin{proof}[Proof of Proposition 5.2 lower bound] 
  Let $\mu_\ve^*$ be the optimal controller
  \be \label{BG4}
\mu^*_\ve(x,T) \ = \ 
-k+\frac{\pa q_\ve(x,T)}{\pa x} \ , \quad x,T>0 \ ,
\ee
and $X^*_\ve(\cdot)$ be the solution to the SDE (\ref{AC4}) with $\mu_\ve=\mu^*_\ve$. Denoting by $\tau^*_{\ve,x,T}$ the first exit time from the half line 
$(0,\infty)$ for the diffusion $X^*_\ve(s), \ s\le T,$ with $X^*_\ve(T)=x>0$, we have  an identity similar to (\ref{AE4}), 
  \begin{multline} \label{BH4}
q_\ve(x,T) \ = \ E\left[  \frac{1}{2}\int_{\tau^*_{\ve,x,T}\vee 0}^T    \left[\mu^*_\ve(X_\ve(s),s)+k\right]^2 \ ds  \ \Big| \ X^*_\ve(T)=x   \right] \\
+  E\left[  q_\ve(X_\ve(\tau^*_{\ve,x,T}\vee 0),\tau^*_{\ve,x,T}\vee 0)  \ \Big| \ X^*_\ve(T)=x  \right] \ .
\end{multline}
Let $X_{\ve,{\rm class}}(s), \ s\le T,$ be the solution to (\ref{AC4}) with $\mu_\ve\equiv-k$. Evidently, conditioned on $X_{\ve,{\rm class}}(T)=x$, one has the formula
\be \label{BI4}
X_{\ve,{\rm class}}(s) \ = \ x+k(T-s)+\sqrt{\ve}B(s), \quad s\le T \ .
\ee
Arguing as in (\ref{AG4}), (\ref{AH4}) we obtain from (\ref{BH4}) the lower bound
\be \label{BJ4}
q_\ve(x,T) \ \ge \  
 E\left[ \frac{\left\{X_{\ve,{\rm class}}(\tau)-X^*_\ve(\tau)\right\}^2}{2(T-\tau)}+q_\ve (X^*_\ve(\tau),0) \ ;\big|  \ X^*_\ve(T)=x\right]  \ ,  
\ee
where $\tau$ is any stopping time satisfying $\tau\ge\tau^*_{\ve.x.T}\vee 0$. 
We use the fact that
\be \label{BK4}
 \inf_{\tau<T}\frac{X_{0,{\rm class}}(\tau)^2}{2(T-\tau)} \ = \  \inf_{\tau<T}\frac{\left\{x+k(T-\tau)\right\}^2}{2(T-\tau)} \ = \  2kx \ ,
\ee
where the minimizing $\tau$ satisfies $T-\tau=x/k$. We can obtain a lower bound on $q_\ve(x,T)$ from (\ref{BJ4}), (\ref{BK4}) if we show for any $\del>0$ that  with high probability
$X_{\ve,{\rm class}}(s)\ge (1-\del)X_{0,{\rm class}}(s), \ s\le T$, provided $X_{\ve,{\rm class}}(T)=X_{0,{\rm class}}(T)=x$ is sufficiently large.  Thus we need to compute the probability that 
\be \label{BL4}
X_{\ve,{\rm class}}(s)- (1-\del)X_{0,{\rm class}}(s) \ = \ \del x+\del k(T-s)+\sqrt{\ve}B(s) \ > 0 \quad {\rm for \ } s<T \ .
\ee
This probability is given by  $1$ minus the RHS of (\ref{AP4}) with $\del x$ replacing $x$ and $\del k$ replacing $k$, whence the probability of the event (\ref{BL4}) is 
$1-\exp[-2\del^2 kx /\ve]$.  Choosing $\tau=\tau^*_{\ve.x.T}\vee 0$ in (\ref{BJ4}), we conclude that
\begin{multline} \label{BM4} 
q_\ve(x,T) \ \ge \\
 \left\{1-\exp\left[-\frac{2\del^2 kx }{\ve}\right]\right\}\min\left[      2k(1-\del)^2x, \ \inf_{X>(1-\del)(x+kT), \ x'>0}  \left\{ \frac{ X-  x'\}^2}{2T}+q_\ve(x',0)\right\}                      \right] \ .
\end{multline}
Since we are assuming that $q_\ve(x',0)\ge f(x')$ and $\lim_{x'\ra\infty}f(x')=\infty$, we obtain on taking the limit $T\ra\infty$ in (\ref{BM4}) the inequality, 
\be \label{BN4}
\liminf_{T\ra\infty}\frac{q_\ve(x,T)}{x} \ \ge \ 2k(1-\del)^2 \left\{1-\exp[-2\del^2 kx /\ve]\right\}     \ .
\ee
The limit in (\ref{BN4}) is uniform for $0<x\le M, \ 0<\ve \le \ve_0$.  Evidently (\ref{BN4}) yields a lower bound close to $2k$ if $x/\ve>>1$. 

We wish to obtain a lower bound, which also holds as $x\ra 0$.  Let $\tau_{\ve,{\rm class},x,T}, \ s<T,$ be the first exit time from the half line $(0,\infty)$ for the diffusion
$X_{\ve,{\rm class}}(s), \ s<T,$ with $X_{\ve,{\rm class}}(T)=x$.  Since the function $x\ra q_\ve(x,T), \ x>0,$ is increasing, it follows from (\ref{BG4}) that 
$\tau^*_{\ve,x,T}>\tau_{\ve,{\rm class},x,T}$. We also have that
\be \label{BO4}
P(\tau_{\ve,{\rm class},x,T}<T-t) \ = \ \int_0^\infty G_{\ve,D}(x,x',t) \ dx' \ ,
\ee
where $G_{\ve,D}$ is given by (\ref{AO4}). Evidently the function $t\ra P(\tau_{\ve,{\rm class},x,T}<T-t)$ is decreasing, and from (\ref{AO4}), (\ref{BO4}) we see that
\be \label{BP4}
\lim_{t\ra\infty} P(\tau_{\ve,{\rm class},x,T}<T-t) \ = \ 1-\exp\left[-\frac{2 kx }{\ve}\right] \ , \quad t> 0 \ .
\ee
Note that (\ref{AP4}) is the integral of the pdf of  $\tau_{\ve,{\rm class},x,T}$, whence the RHS of (\ref{AP4}) and the RHS of (\ref{BP4}) add up to $1$. 
If $x/\ve\le K$ for some constant $K$, the asymptotic limit on the RHS of (\ref{BP4}) is closely approximated, uniformly for $0<\ve\le\ve_0$, when $t\ge N\ve$ where $N>>1$ is independent of $\ve$. The inequality (\ref{AT4}) gives a quantitative estimate of this. 

We assume $x$ satisfies $0<x\le Nk\ve/2$ with $N\ge 1$ and take $\tau=\tau^*_{\ve.x.T}\vee (T-N\ve)$ in (\ref{BJ4}). Using the quadratic inequality in (\ref{BJ4}) and the formula (\ref{BI4}),  we obtain a lower bound 
\begin{multline} \label{BQ4}
q_\ve(x,T) \ \ge \ \rho\left\{2kxP(\tau^*_{\ve,x,T}>T-N\ve)-2k\sqrt{\ve}E\left[B(T-N\ve); \ \tau^*_{\ve,x,T}<T-N\ve\right] \right\}\\
+E\left[ \frac{\left\{X_{\ve,{\rm class}}(T-N\ve)-X^*_\ve(T-N\ve)\right\}^2}{2N\ve}+q_\ve (X^*_\ve(T-N\ve),0) \ ; \  \tau^*_{\ve,x,T}<T-N\ve\right]   \ ,
\end{multline}
for any $\rho$ satisfying $0\le \rho\le 1$. 
In (\ref{BQ4}) we have used the identity $E[B(\tau)]=0$.  Observe from (\ref{BN4})  that for any $M>0$ there exists $N_0\ge 1,T_0>0$ such that 
\be \label{BR4}
 \inf_{X>Nk\ve/2, \ x'>0}  \left\{ \frac{ X-  x'\}^2}{2N\ve}+q_\ve(x',T-N\ve)\right\}   \ \ge \ \frac{Nk^2\ve}{32} \quad {\rm if \ } N\ge N_0, \ T\ge T_0, \  Nk\ve\le M \ .
\ee
We also have as in (\ref{BO4}) that
\begin{multline}  \label{BS4}
P\left(\tau_{\ve,{\rm class},x,T}<T-N\ve, \ X_{\ve,{\rm class}}(T-N\ve)<Nk\ve/2\right) \\
 = \ \int_0^{Nk\ve/2} G_{\ve,D}(x,x',N\ve) \ dx' \ \le \ \frac{Cx}{\ve}\exp\left[-\frac{Nk^2}{16}\right] \ ,
\end{multline}
for some constant $C$, independent of $x,\ve, N$ in the range $x>0, \ 0<\ve\le\ve_0, \ N\ge 1$.  Suppose now that
\be \label{BT4}
 P\left(\tau^*_{\ve,x,T}<T-N\ve\right) \ \ge \ \frac{2Cx}{\ve}\exp\left[-\frac{Nk^2}{16}\right] \ .
\ee
Setting $\rho=0$ in (\ref{BQ4}) and using the fact that $\tau^*_{\ve,x,T}>\tau_{\ve,{\rm class},x,T}$, we conclude from (\ref{BQ4})-(\ref{BS4}) that if (\ref{BT4}) holds then
\be \label{BU4}
q_\ve(x,T) \ \ge \  \frac{Nk^2\ve}{64}P\left(\tau^*_{\ve,x,T}<T-N\ve\right)  \quad {\rm if \ } N\ge N_0, \ T\ge T_0, \  Nk\ve\le M \ .
\ee
Evidently (\ref{BU4}) yields the lower bound on $q_\ve(x,T)$ unless
\be \label{BV4}
 P\left(\tau^*_{\ve,x,T}<T-N\ve\right) \ \le \frac{128x}{Nk\ve} \ .
\ee

We assume now that $0<x\le Nk\ve/2$ and that (\ref{BV4}) holds.  We apply the inequality (\ref{BQ4}) with $\rho=1$ and divide the second term on the RHS of (\ref{BQ4}) into the expectation over $B(T-N\ve)\le N^{2/3}\sqrt{\ve}$ and $B(T-N\ve)\ge N^{2/3}\sqrt{\ve}$.  Using (\ref{BI4})  we then have that
\begin{multline} \label{BW4}
q_\ve(x,T) \ \ge \ 2kx\left[1- \frac{128x}{Nk\ve}-\frac{128}{N^{1/3}k}\right] \\
-2kE\big[X_{\ve,{\rm class}}(T-N\ve)-X_{0,{\rm class}}(T-N\ve) \ ; \ \tau_{\ve,{\rm class},x,T}<T-N\ve, \\
 X_{\ve,{\rm class}}(T-N\ve)-X_{0,{\rm class}}(T-N\ve) > N^{2/3}\ve\big] \ .
\end{multline}
The expectation in (\ref{BW4}) is given by
\be \label{BX4}
 \int_{x+Nk\ve+N^{2/3}\ve}^\infty G_{\ve,D}(x,x',N\ve)[x'-(x+Nk\ve)] \ dx' \ \le \ Cx\exp\left[-N^{1/6}\right]  \ ,
\ee
where $C$ is a constant which can be chosen uniformly for $0<x\le Nk\ve/2, \ 0<\ve\le\ve_0, \ N\ge N_0$. 

To obtain the lower bound on $q_\ve(x,T)$ we first choose $N\ge N_0$ large and $M$ satisfying $Nk\ve_0= M$. Then (\ref{BN4}) holds and hence the lower bound up to an $O(1/N^{1/3})$ correction, when $Nk\ve/2\le x\le M$.  In the case $0<x\le Nk\ve/2$ the argument has just been given. 
   \end{proof}
  \begin{proposition}
Assume  the function $A(\cdot)$ satisfies the conditions of Proposition 5.1. Assume also there exists $\del_0,T_0>0$ such that
\be \label{BY4}
A\left(T+1/A(T)\right) \ \le \ \frac{A(T)}{1+\del_0} \quad {\rm if \ } T\ge T_0. 
\ee
 Letting $q_\ve(x,y,T)$ be the function defined by (\ref{A3}), (\ref{O3}), then for all $x,y>0$ one has $\lim_{T\ra\infty} \frac{q_\ve(x,y,T)}{2x}=1$. 
In addition the limit is uniform in any region $0<x\le M,  \ 0<\ve\le\ve_0,  \ y_0<y<y_\infty$, where $\ve_0, M>0$ and $0<y_0<y_\infty<\infty$. 
\end{proposition}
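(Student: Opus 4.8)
The upper bound $\limsup_{T\ra\infty}q_\ve(x,y,T)/2x\le 1$ is already available, and uniformly in $\ve$: it is exactly (\ref{AB4}), obtained from the $\ve$-independent estimate $q_\ve(x,y,T)\le -2\la(0,y,T)x$ of Proposition 3.3 of \cite{cd} together with $\lim_{T\ra\infty}\la(0,y,T)=-1$, which follows from (\ref{A4}) as in the discussion after (\ref{C4}). So the plan is to concentrate entirely on the matching lower bound $\liminf_{T\ra\infty}q_\ve(x,y,T)/2x\ge 1$, uniformly for $0<x\le M$, $0<\ve\le\ve_0$, $y_0<y<y_\infty$. The overall strategy is to run the argument of Proposition 5.1 in the stochastic setting, patched near time $T$ by the constant-drift result of Proposition 5.2; the hypothesis (\ref{BY4}) will play the quantitative role that (\ref{A4}) alone played in Proposition 5.1, and is, as in Proposition 4.2, a geometric decay statement for $A$ on the scale $1/A$.

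First I would fix $\del\in(0,1)$ and, using that $\lim_{s\ra\infty}\la(x,y,s)=-1$ uniformly for $0<x\le 2M$, $0<y<y_\infty$ (again a consequence of (\ref{A4})), choose $T_\del$ so large that $-(1+\del)\le\la(x,y,s)\le-(1-\del)$ on that range whenever $s\ge T_\del$. Then I would use the optimal stochastic control representation (\ref{AE4}) of $q_\ve(x,y,T)$, run the optimal diffusion $X^*_\ve(\cdot)$ backward from $X^*_\ve(T)=x$, and split the expectation according to two events: the \emph{near-boundary event}, on which $X^*_\ve$ never reaches the level $2M$ on $[T_\del,T]$ and its first exit time $\tau^*_{\ve,x,T}$ from $(0,\infty)$ satisfies $\tau^*_{\ve,x,T}\ge T_\del$; and the \emph{interior event}, which is the complement.

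On the near-boundary event the drift $\la$ along the path stays within $\del$ of $-1$, the exit cost $q_\ve(0,\cdot,\cdot)=0$, and $X^*_\ve$ lives in the strip $(0,2M]\times[T_\del,T]$. Here I would reproduce, with $\la$ replaced by the constant $-(1\pm\del)$, the lower-bound argument of Proposition 5.2 — the passage (\ref{BQ4})--(\ref{BX4}) — which is exactly what controls this regime down to $x=O(\ve)$ and $x\ra 0$: paths exiting within $O(N\ve)$ of $T$ are compared with the constant-drift diffusion whose Green's function (\ref{AO4}) is explicit, while paths surviving much longer accumulate running cost growing linearly in $T-\tau^*_{\ve,x,T}$ (the controller must fight a drift of size $\approx 1$ to keep the path alive), which swamps $2x$ once $T$ is large; the outcome should be a contribution $\ge 2(1-\del)x(1+o(1))$ uniformly for $0<x\le M$, $0<\ve\le\ve_0$. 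On the interior event I would transplant the characteristic construction of Proposition 5.1: condition (\ref{BY4}) guarantees, through (\ref{M4}) and the diffeomorphism $\mathcal{U}(T_1)\ra\mathcal{D}_y(T_1)$, that $\mathcal{D}_y(T_1)$ eventually contains $\{0<x<2M,\ T>T_{2M}\}$, and through (\ref{R4}) that $q_{\rm char}(x,y,T)/2x\ra 1$ uniformly for $0<x\le 2M$. Since $q_{\rm char}$ is concave in $x$ — from (\ref{Q4}) and the monotonicity of $s\mapsto g_{2,A}(s,s)$ — it is a subsolution of the $\ve>0$ Hamilton--Jacobi--Bellman equation $\pa_Tq+\la\pa_xq+\tfrac12(\pa_xq)^2-\tfrac\ve2\pa_{xx}q=0$ on $\mathcal{D}_y(T_1)$, so an It\^o/verification computation along $X^*_\ve$, combined with the quadratic term of (\ref{AH4}) evaluated at $s=T_{2M}$ — where $\lim_{T\ra\infty}x_{\rm class}(T_{2M},T)=\infty$ by (\ref{F4}), disposing of paths exiting before $T_{2M}$ exactly as (\ref{X4}), (\ref{Z4}) did — should give a contribution $\ge 2\cdot(2M)\cdot(1-o(1))\ge 4x$. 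Putting the two contributions together yields $\liminf_{T\ra\infty}q_\ve(x,y,T)/2x\ge 1-\del$ uniformly on the stated region, and letting $\del\downarrow 0$ finishes the proof.

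I expect the main obstacle to be the uniformity in $\ve\in(0,\ve_0]$, in two places. In the near-boundary regime one must run the delicate estimates (\ref{BS4}), (\ref{BX4}) of Proposition 5.2 — which there relied on the \emph{explicit} formula (\ref{AO4}) for the Dirichlet Green's function, unavailable for the drift $\la$ of (\ref{C4}) — by freezing $\la$ at $-1$ over the short, $O(\max\{x,\ve\})$-length windows that actually govern the exit, with all errors shown to vanish uniformly in $\ve$. In the interior regime the subsolution argument produces an It\^o remainder $\tfrac\ve2\int\pa_{xx}q_{\rm char}\,ds$ over windows of length $O(1/A(T))$, whose smallness uniformly in $\ve$ must be extracted by differentiating (\ref{Q4}) in $x$ together with (\ref{BY4}); and one must also verify, again uniformly in $\ve$, that a diffusion conditioned to terminate at $x\le M$ is exponentially unlikely to have spent appreciable time above $x=2M$, so that the ``far'' boundary of $\mathcal{D}_y(T_1)$ may genuinely be ignored.
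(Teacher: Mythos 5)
Your proposal is a genuinely different attempt from the paper's, but it has a serious gap that I do not think your outline closes. The paper's proof is organized around an intermediate statement, (\ref{BZ4}): for any $\ve_0,\del,y_0>0$ there is $f$ with $\lim_{T\to\infty}f(T)=\infty$ and $q_\ve(\del/A(T),y,T)\ge f(T)$ for $0<\ve\le\ve_0$, $y\ge y_0$. This is precisely the substitute for the hypothesis $q_\ve(x,0)\ge f(x)$, $\lim_{x\to\infty}f(x)=\infty$, on the initial data in Proposition~5.2; the entire block (\ref{CA4})--(\ref{DC4}) of the paper is devoted to establishing it, by bootstrapping $q_\ve(\del/A(\cdot),y,\cdot)$ along the telescoping time sequence $T_{n+1}=T_n+1/A(T_n)$ with events $\Omega_n$ and $\mathcal{E}_{T_n}$, (\ref{CR4})--(\ref{CW4}). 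It is \emph{there}, via the consequence $A(T)\le A(\tilde T)/(1+\del_0)$ with $\tilde T+1/A(\tilde T)=T$, that the hypothesis (\ref{BY4}) enters. Your proposal instead locates (\ref{BY4}) behind (\ref{M4}), the diffeomorphism $\mathcal{U}(T_1)\to\mathcal{D}_y(T_1)$, and (\ref{R4}) -- but those are all established in Proposition~5.1 from (\ref{A4}) alone, without (\ref{BY4}) -- and otherwise only gestures at (\ref{BY4}) as a device to tame the It\^o remainder $\tfrac{\ve}{2}\int\partial_{xx}q_{\rm char}\,ds$. That misattributes where the extra hypothesis is needed.

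The practical consequence is that the near-boundary half of your decomposition does not close. You plan to ``reproduce, with $\la$ replaced by the constant $-(1\pm\del)$, the lower-bound argument of Proposition~5.2 -- the passage (\ref{BQ4})--(\ref{BX4}).'' But the estimate (\ref{BR4}) there, which drives (\ref{BQ4})--(\ref{BX4}), is not a pure constant-drift computation: it is deduced from (\ref{BN4}), which in turn uses the initial-data assumption $q_\ve(x',0)\ge f(x')$ through the infimum in (\ref{BM4}). In your setting there is no $t=0$ initial-data hypothesis; paths that survive past $T_\del$, stay below $2M$, and only exit very late (or never, entering the $t<T_\del$ regime where the drift is uncontrolled) contribute to the near-boundary event and must be shown to incur either large running cost or a large $q_\ve$ at an intermediate stopping time. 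That is exactly the content of (\ref{BZ4}), and your outline does not replace it. The interior event (hitting $2M$, or exiting before $T_{2M}$) is handled plausibly -- the quadratic penalty from (\ref{AH4}) at $s=T_{2M}$ plus (\ref{F4}) disposes of early exits, and your observation that $q_{\rm char}$ is concave in $x$ (hence a subsolution of the $\ve>0$ HJB equation) is a nice idea for the characteristic region -- but it does not rescue the near-boundary part, where the real difficulty and the actual use of (\ref{BY4}) live. As written, the proposal therefore misses the central step of the argument.
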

 \begin{proof}  In view of (\ref{AB4}) it is sufficient to establish a lower bound.  Since $\lim_{T\ra\infty}\la(x,y,T)=-1$ we may use a similar argument to the one just given of the lower bound in Proposition 5.2. We replace the assumption in Proposition 5.2 on the initial data by the following:
 \begin{multline} \label{BZ4}
 {\rm For \ any \ } \ve,\del, y_0>0,  \ {\rm there \ exists \ a \ function \ } f:[1,\infty)\ra\mathbb{R}  \  {\rm such \  that \ }  \\
\lim_{T\ra\infty} f(T)=\infty, \  \ q_\ve(\del/A(T),y,T)\ge f(T) \ {\rm for \ } 0<\ve\le\ve_0, \  y\ge y_0 \ .
 \end{multline}
  The extra assumption (\ref{BY4}) on $A(\cdot)$ is needed for the proof of (\ref{BZ4}).   
  
 To begin the proof of (\ref{BZ4}) we consider any $T_0\ge 1$ and define the stopping time $\tau_T$ by  $\tau_T=\tau^*_{\ve,x,T}\vee T_0$. We use the formula (\ref{AH4}), whence we have the inequality
 \begin{multline} \label{CA4}
q_\ve(x,y,T) \ \ge \ E\left[ q_\ve(X^*_\ve(T_0),y,T_0) ; \ \tau^*_{\ve,x,T}\le T_0\right] \\
+\frac{\sig^2_A(T)}{2}E\left[ \frac{\left\{X_{\ve,{\rm class}}(T_0,T)-X^*_\ve(T_0)\right\}^2}{\sig^2_A(T_0)\sig^2_A(T_0,T)} ; \ \tau^*_{\ve,x,T}\le T_0\right]  \ .
\end{multline}
 We see from (\ref{AF4}), (\ref{AI4}) that
 \be \label{CB4}
 {\rm Var}\left[\sqrt{\ve}\frac{\sig^2_A(s)}{m_{1,A}(s)}Z(s)\right] \ = \ \ve\frac{\sig_A^2(s)\sig_A^2(s,T)}{\sig^2_A(T)} \ \le \ \ve\sig^2_A(s) \ , \quad 0<s<T \ .
 \ee
 From Proposition 3.3 of \cite{cd} one has that the inequality (\ref{Z4}) extends to $\ve>0$, so 
 \be \label{CC4}
q_\ve(x,y,T) \ \ge \ \ \frac{2m_{1,A}(T)xy}{\sig^2_A(T)} \ , \quad x,y,T>0 \ .
\ee 
It follows then from (\ref{F4}), (\ref{AF4}), (\ref{CA4})-(\ref{CC4}) that for any $\rho,K>0$ there exists $T_{\rho,K}>T_0$ such that
\be \label{CD4}
q_\ve(x,y,T) \ \ge K \quad {\rm for \ }   \ y\ge y_0, \ \ 0<\ve\le\ve_0, \ T\ge T_{\rho,K}, \quad {\rm if \ } P(\tau^*_{\ve,x,T}\le T_0)\ge \rho \ .
\ee
From (\ref{CD4}) we may assume  in the remainder of the argument that $\tau^*_{\ve,x,T}> T_0$ with probability at least $1-\rho$, where $\rho>0$ may be taken arbitrarily small. 

Next we consider $T>1/A(0)$. Since the function $s\ra s+1/A(s), \ s>0,$ is strictly increasing, there exists unique  $\tilde{T}<T$ such that 
$\tilde{T}+1/A(\tilde{T})=T$. It follows from (\ref{BY4}) that $A(T)\le A(\tilde{T})/(1+\del_0)$. We have from (\ref{E4}), (\ref{F4}) that
 \begin{multline} \label{CE4}
 x_{\rm class}(s,T) \ \ge \ \frac{\sig_A^2(s)}{m_{1,A}(s)^2}
  \left( \frac{\sig_A^2(T)}{m_{1,A}(T)^2}   \right)^{-1}\frac{[x+m_{2,A}(s,T)]}{m_{1,A}(s,T)} \\
 - \  \frac{1}{m_{1,A}(s)}\frac{\sig_A^2(s,T)}{m_{1,A}(s,T)^2}
 \left(  \frac{\sig_A^2(T)}{m_{1,A}(T)^2}   \right)^{-1}\frac{m_{2,A}(s)}{m_{1,A}(s)} \  , \quad 0<s<T \ .
 \end{multline}
It follows from (\ref{A4}) (d),(e) and  (\ref{CE4}) that for any $\nu>0$ there exists $T_\nu>1/A(0)$ such that
\be \label{CF4}
 x_{\rm class}(\tilde{T},T) \ \ge \ \frac{1}{e(1+\nu)}\left[x+\frac{1}{A(\tilde{T})}\right] \quad {\rm if \ } x>0, \ T\ge T_\nu \ .
\ee
From (\ref{AF4}) we have that
\begin{multline} \label{CG4}
X_{\ve,{\rm class}}(s,T) = \ x_{\rm class}(s,T) - \sqrt{\ve} \ \Phi(s,T)\tilde{Z}(s,T) \ , \\
{\rm where \ } \Phi(s,T) \ = \ \frac{\sig^2_A(s)m_{1,A}(s,T)}{\sig^2_A(T)}  \ , \quad \tilde{Z}(s,T)  \ = \ \int_s^T\frac{dB(s')}{\Phi(s',T)}        \ .
\end{multline}
The function $s\ra \Phi(s,T)$ is positive increasing and $\Phi(T,T)=1$. We also have that
\be \label{CG*4}
\Phi(\tilde{T},T) \ = \  \frac{\sig^2_A(\tilde{T})m_{1,A}(\tilde{T},T)}{\sig^2_A(T)}  \ = \ \frac{\sig^2_A(\tilde{T})m_{1,A}(\tilde{T},T)}{m_{1,A}(\tilde{T},T)^2\sig^2_A(\tilde{T})+\sig^2_A(\tilde{T},T)}  \ .
\ee
Since $\sig^2_A(\tilde{T},T)\le m_{1,A}(\tilde{T},T)^2/A(\tilde{T})$ and $1\le m_{1,A}(\tilde{T},T)\le e$,  we can obtain a lower bound on $\Phi(\tilde{T},T)$, independent of  $T$ as $T\ra\infty$,  if we can bound above $1/\sig^2_A(\tilde{T})$ by a constant times $A(\tilde{T})$.  Note this is tantamount to the linear term in the formula (\ref{C4}) for the drift $x\ra\la(x,y,T)$ being dominated by $A(T)x$.  From (\ref{A4}) (e) we see that $\lim_{T\ra\infty} m_{1,A}(\tilde{T},T)\Phi(\tilde{T},T)=1$, whence the lower bound follows. 
From (\ref{AI4}) we see that
\be \label{CH4}
{\rm Var}[\tilde{Z}(s,T)] \ =  \ \frac{\sig_A^2(s,T)}{m_{1,A}(s,T)\Phi(s,T)}  \ .
\ee
We also have by the reflection principle that
\be \label{CI4}
P\left(\sup_{s<s'<T}\tilde{Z}(s',T)>a\right) \ =  \ 2 P\left(\tilde{Z}(s,T)>a\right) \ , \quad a>0 \ .
\ee
The identities (\ref{CG4})-(\ref{CI4}) enable us to bound below the diffusive path $X_{\ve,{\rm class}}(\cdot,T)$ with high probability.

Observe that the RHS of (\ref{E4}) is non-negative if $x,y\ge 0$ and $A(\cdot)$ is non-negative. Hence similarly to (\ref{CF4}), we may choose $T_\nu$ large enough so that
\be \label{CJ4}
 x_{\rm class}(s,T) \ \ge \ \frac{x}{e(1+\nu)} \quad {\rm if \ } x>0, \ T\ge T_\nu, \ \tilde{T}\le s\le T \ .
\ee
From (\ref{CH4}), (\ref{CI4}) and (\ref{I8}) we have that 
\be \label{CK4}
P\left(\sup_{\tilde{T}<s'<T}\tilde{Z}(s',T)>a\right) \ \le  \ \left(\frac{2}{\pi}\right)^{1/2}\frac{\sig_T}{a}\exp\left[-\frac{a^2}{2\sig_T^2}\right] \ , \quad \sig_T^2 \ = \ \frac{\sig^2_A(\tilde{T},T)}{m_{1,A}(\tilde{T},T)\Phi(\tilde{T},T)} \ .
\ee
We choose $T_\nu$ large enough so that
\be \label{CL4}
\sig^2_T \ \le \ \frac{e^2(1+\nu)}{A(\tilde{T})} \quad {\rm if \ } T\ge T_\nu \ ,
\ee
and define the event $\mathcal{E}_T$ by
\be \label{CM4}
\mathcal{E}_T \ = \ \left\{\sup_{\tilde{T}<s<T} \sqrt{\ve_0} \ \Phi(s,T)\tilde{Z}(s,T)<\frac{1}{A(\tilde{T})^{2/3}}\right\} \ .
\ee
Choosing $a\simeq 1/A(\tilde{T})^{2/3}$ in (\ref{CK4}) we see from (\ref{CL4}) and (\ref{A4})(a) there exists $T_1>1/A(0)$ such  that
\be \label{CN4}
P\left(\mathcal{E}_T\right) \ \ge \  1-\exp\left[-\frac{1}{A(\tilde{T})^{1/4}}\right] \  ,
\quad T\ge T_1 \ .
\ee
We conclude from (\ref{CF4}), (\ref{CG4}), (\ref{CJ4}), (\ref{CM4}) that for any $\del$ satisfying  $0<\del<1/3$, there exists $T_\del\ge T_1$ such that
\begin{multline} \label{CO4}
\inf_{\tilde{T}<s<T} X_{\ve,{\rm class}}(s,T) \ge \  \frac{\del}{3A(T)} \quad {\rm and \ }   X_{\ve,{\rm class}}(\tilde{T},T) \ge \  \frac{1}{3}\left[\frac{\del}{A(T)}+\frac{1}{A(\tilde{T})} \right] \\
{\rm on \ the \ event \ } \mathcal{E}_T \  {\rm when \ } \ x=\frac{\del}{A(T)} \ , \  T\ge T_\del \ .
\end{multline}
It follows from (\ref{CO4}) that
\begin{multline} \label{CP4}
 \frac{\sig^2_A(T)}{2}\frac{X_{\ve,{\rm class}}(s,T)^2}{\sig^2_A(s)\sig^2_A(s,T)} \ge 
    \frac{\del^2}{20A(T)} \ , \quad \frac{\sig^2_A(T)}{2}\frac{[X_{\ve,{\rm class}}(\tilde{T},T)-x']^2}{\sig^2_A(\tilde{T})\sig^2_A(\tilde{T},T)} \ \ge \ 
     \frac{\del^2}{20A(T)} \\
   {\rm on \ the \ event \ } \mathcal{E}_T \  {\rm when \ } \ x=\frac{\del}{A(T)}, \  T\ge T_\del, \quad {\rm if \ } \ \tilde{T} <s\le T, \ 0<x'<\frac{\del}{A(\tilde{T})} \ .
 \end{multline}
 Let $\Om_T$ be the event
 \be \label{CQ4}
 \Om_T \ = \ \left\{   x=\frac{\del}{A(T)}, \   \tilde{T}<\tau^*_{\ve,x,T}<T\right\}\cup  \left\{   x=\frac{\del}{A(T)}, \   \tau^*_{\ve,x,T}\le \tilde{T},  \ 0<X^*_\ve(\tilde{T})<\frac{\del}{A(\tilde{T})} \right\} \ .
 \ee
We conclude from (\ref{AH4}), (\ref{CP4}), (\ref{CQ4}) that
 \be \label{CR4}
 q_\ve(\del/A(T),y,T) \ \ge \  P(\Om_T^c\cap\mathcal{E}_T) q_\ve(\del/A(\tilde{T}),y,\tilde{T})+  P(\Om_T\cap\mathcal{E}_T) \frac{\del^2}{20A(T)}  \ .
 \ee
where $\Om_T^c$ is the complement of $\Om_T$.

  We choose any $\del$ with $0<\del<1/3$ and $T_\del$ as in (\ref{CO4}).   For $T>T_\del+1/A(T_\del)$ we may define a sequence of times $T_n, \ n=1,2,.., N,$ with  $T_\del\le T_1< T_\del+1/A(T_\del), \ T_N=T$, and $T_{n+1}=T_n+1/A(T_n), \ n=1,2,..,N-1$.   We  define events $\Om_n, \ n=1,2,..,N-1,$ by
 \begin{multline} \label{CS4}
 \Om_n \ = \ \{T_{n+1}\ge\tau^*_{\ve,x,T}>T_n, \ X^*_\ve(T_m)\ge \del/A(T_m),  \ m=n+1,\dots, N\} \\
 \cup\{\tau^*_{\ve,x,T}\le T_n, \ X^*_\ve(T_n)<\del/A(T_n)\} \  ,
 \end{multline} 
where $x=\del/A(T)$.  The events $\Om_n$ are disjoint and
\be \label{CT4}
 \sum_{n=1}^{N-1}P\left(\Om_n              \right)+P\left(  \tau^*_{\ve,x,T}\le T_1, \ X^*_\ve(T_m)\ge \del/A(T_m), \ m=1,\dots N              \right) \ = \ 1 \ .
\ee
We define the probabilities 
\be \label{CU4}
p_{j-1} \ = \ \frac{P(\Om_{T_j}\cap\mathcal{E}_{T_j})}{P(\mathcal{E}_{T_j})}  \ , \quad \la_{j-1}=P(\mathcal{E}_{T_j}) \ , \ j=2,\dots, N  \ .
\ee
Then (\ref{CR4}) implies that
\be \label{CV4}
 q_\ve(\del/A(T_n),y,T_n) \ \ge \  \la_{n-1}\left[(1-p_{n-1}) q_\ve(\del/A(T_{n-1}),y,T_{n-1})+  p_{n-1}\frac{\del^2}{20A(T_n)}\right]  \ , \  \ n=2,\dots, N.
 \ee
 Iterating the inequality (\ref{CV4}) starting with $n=N$ down to $n=2$, we conclude that
 \be \label{CW4}
  q_\ve(\del/A(T),y,T) \ \ge \ \prod_{n=1}^{N-1}\la_n\left[1-\prod_{n=1}^{N-1}(1-p_j)\right] \frac{\del^2}{20A(T_\del)} \ .
 \ee 
 We also have that  $P(\Om_{T_n})\ge P(\Om_{n-1}), \ n=2,\dots, N$, whence we see that
 \be \label{CX4}
 \sum_{n=1}^{N-1} p_n \ \ge \  \sum_{n=2}^{N} P(\Om_{T_n})-\sum_{n=2}^{N} P(\mathcal{E}^c_{T_n}) \ \ge \ 
  \sum_{n=1}^{N-1} P(\Om_n)-\sum_{n=2}^{N} P(\mathcal{E}^c_{T_n}) \ .
 \ee
 It follows from (\ref{BY4}), (\ref{CN4}) that $T_1$ in (\ref{CN4}) may be chosen so that
 \be \label{CY4}
 \sum_{n=2}^{N} P(\mathcal{E}^c_{T_n}) \ \le \  \frac{1}{4}, \quad \prod_{n=1}^{N-1}\la_n \ \ge \frac{1}{2} \quad {\rm if \ } T_\del\ge T_1 \ .
 \ee
 Suppose now that
 \be \label{CZ4}
  \sum_{n=1}^{N-1} P(\Om_n) \ \ge \ \frac{1}{2} \ .
 \ee
 It follows then from (\ref{CX4})-(\ref{CZ4}) that
 \be \label{DA4}
 \prod_{n=1}^{N-1}(1-p_j) \ \le \ \exp\left[-\sum_{n=1}^{N-1}p_j\right] \ \le \ \exp\left[-\frac{1}{4}\right] \ .
 \ee
 We conclude from (\ref{CW4}), (\ref{CY4}), (\ref{DA4}) that if (\ref{CZ4}) holds then
 \be \label{DB4}
  q_\ve(\del/A(T),y,T) \ \ge \ \frac{1}{2}\left[1-e^{-1/4}\right] \frac{\del^2}{20A(T_\del)} \ .
 \ee
 
 To prove (\ref{BZ4}) we observe first that if (\ref{CZ4}) holds then  $  q_\ve(\del/A(T),y,T) $  is bounded below by the RHS of (\ref{DB4}) for $T>T_\del+1/A(T_\del)$. 
 This bound is uniform for all $\ve,y$ satisfying $0<\ve\le\ve_0, \ y>0$.  In the case when (\ref{CZ4}) does not hold we have from (\ref{CT4}) that
 \be \label{DC4}
 P\left(  \tau^*_{\ve,x,T}\le T_1 \       \right) \ \ge \ \frac{1}{2} \ , \quad x=\frac{\del}{A(T)} \ .
\ee
Then (\ref{CD4}), (\ref{DC4}) imply a lower bound on  $  q_\ve(\del/A(T),y,T) $ for large $T$ provided $0<\ve\le\ve_0, \ y\ge y_0$.   Since $K$ in (\ref{CD4}) may be chosen arbitrarily large, and $A(T_\del)$ in (\ref{DB4}) arbitrarily small, the lower bound (\ref{BZ4}) follows. 

We proceed now as in the proof of the lower bound in Proposition 5.2 beginning at (\ref{BG4}), replacing the condition on the initial  data in Proposition 5.2 by 
(\ref{BZ4}).  Thus we consider $q_\ve(x,y,T)$  for large $T$ and use (\ref{BZ4}) at time $\tilde{T}$.  We take $\tau_T=\max\{\tau^*_{x,\ve,T},\tilde{T}\}$ in (\ref{AH4}). 
It follows from (\ref{A4}) (d),(e), (\ref{E4}), (\ref{F4}) that for any $\nu>0$ there exists $T_\nu>1/A(0)$ such that if $T\ge T_\nu$ then
\be \label{DD4}
x_{\rm class}(s,T) \ \ge \ \frac{1}{(1+\nu)m_{1,A}(s,T)} \left[x+m_{2,A}(s,T)\right] \quad {\rm for \ } \tilde{T}\le s\le T \ .
\ee
We wish next to estimate from below the probability that $\tau^*_{\ve,x,T}<\tilde{T}$. To see this we first observe from (\ref{AF4}) that
\be \label{DE4}
X_{\ve,{\rm class}}(s,T)>0 \quad {\rm \ if \ } \frac{\sig_A^2(T)}{m_{1,A}(T)^2}
 \left(  \frac{\sig_A^2(s)}{m_{1,A}(s)^2}   \right)^{-1}m_{1,A}(s,T) x_{\rm class}(s,T)-\sqrt{\ve}\frac{\sig^2_A(T)}{m_{1,A}(T)}Z(s)>0 \ .
\ee
From (\ref{AI4}) we have that
 \be \label{DF4}
 {\rm Var}\left[\frac{\sig^2_A(T)}{m_{1,A}(T)}Z(s)\right] \ = \ \frac{\sig_A^2(T)}{m_{1,A}(T)^2} \left(  \frac{\sig_A^2(s)}{m_{1,A}(s)^2}   \right)^{-1}\sig^2_{A}(s,T)  \ , \quad 0<s<T \ .
 \ee
 Since the function $s\ra \sig^2_A(s)/m_{1,A}(s)^2$ is increasing, we have from  (\ref{DD4}), (\ref{DF4}) the inequality
 \begin{multline} \label{DG4}
  \frac{\sig_A^2(T)}{m_{1,A}(T)^2}\left(  \frac{\sig_A^2(s)}{m_{1,A}(s)^2}   \right)^{-1}m_{1,A}(s,T) x_{\rm class}(s,T) \\
  \ge \ \frac{x}{1+\nu}+ \frac{1}{(1+\nu)m_{1,A}(s,T)}{\rm Var}\left[\frac{\sig^2_A(T)}{m_{1,A}(T)}Z(s)\right]  \ , \quad s<T \ .
 \end{multline} 
Using the fact that the martingale $s\ra Z(s)$ is a rescaled Brownian motion,  and the inequality $m_{1,A}(s,T)\le e$ for $\tilde{T}\le s\le T$, we conclude from (\ref{AP4})  with $x$ replaced by $x/(1+\nu)$ and $k=1/(1+\nu)e$ and (\ref{DD4})-(\ref{DF4})  that
\be \label{DH4}
P\left(\tau_{\ve,{\rm class},x,T}<\tilde{T}\right) \ \ge \ 1-\exp\left[    -\frac{2x}{\ve(1+\nu)^2e}                   \right] \quad {\rm if \ } T\ge T_\nu \ ,
\ee
where $\tau_{\ve,{\rm class},x,T}$ is the first exit time from the half line $(0,\infty)$ for $X_{\ve,{\rm class}}(s,T), \ s<T$. 

We use (\ref{DH4}) to prove the analogue of (\ref{BM4}).  Thus from (\ref{DH4}) we have for $0<\del<1$ that
\begin{multline} \label{DI4}
X_{\ve,{\rm class}}(s,T) \ >  \ (1-\del) x_{\rm class}(s,T), \ \ \tilde{T}\le s\le T, \\
{\rm with \ probability \ at \ least \ } 1-\exp\left[    -\frac{2\del^2x}{\ve(1+\nu)^2e}  \right] \ .
\end{multline}
We also have from (\ref{DD4}) that for any $M>0$ then
\be \label{DJ4}
\frac{\sig^2_A(T)}{2}\inf_{\tilde{T}<\tau<T} \frac{x_{\rm class}(\tau,T)^2}{\sig^2_A(\tau)\sig^2_A(\tau,T)} \ \ge \  \frac{2x}{(1+\nu)^2}, \quad 0<x\le M,  \  T\ge T_{\nu,M}  \ ,
\ee
where $T_{\nu,M}\ge T_\nu$ depends also on $M$.  From (\ref{DD4}) we see there are constants $C_1,C_2>0$ such that 
\begin{multline} \label{DK4}
\inf_{X>(1-\del) x_{\rm class}(\tilde{T},T)}\left[ \frac{\sig^2_A(T)}{2} \frac{(X-x')^2}{\sig^2_A(\tilde{T})\sig^2_A(\tilde{T},T)} +q_\ve(x',y,\tilde{T}) \right]  \\
\ge \ \min\left[ \frac{\sig^2_A(T)}{8} \frac{(1-\del)^2x_{\rm class}(\tilde{T},T)^2}{\sig^2_A(\tilde{T})\sig^2_A(\tilde{T},T)}, \ q_\ve\left(\frac{1}{2}(1-\del) x_{\rm class}(\tilde{T},T),y,\tilde{T}\right)\right]  \\
\ge \min\left[\frac{C_1}{A(\tilde{T})}, \ q_\ve\left(\frac{C_2}{A(\tilde{T})},y,\tilde{T}\right)\right] \quad {\rm if \ } 0<\del<\frac{1}{2}, \ x'>0, \ T\ge T_\nu \ .
\end{multline}
It follows from (\ref{AH4}),  (\ref{BZ4}) and (\ref{DI4})-(\ref{DK4}) that
\be \label{DL4}
\liminf_{T\ra\infty}\frac{q_\ve(x,y,T)}{2x} \ \ge \ \frac{(1-\del)^2}{(1+\nu)^2}\left\{1-\exp\left[    -\frac{2\del^2x}{\ve(1+\nu)^2e}  \right]\right\} \ .
\ee
The limit in (\ref{DL4}) is uniform for $0<x\le M, \ 0<\ve\le\ve_0,  \ y_0\le y\le y_\infty$. 

The remainder of the proof follows the same lines as the proof of the lower bound in Proposition 5.2, beginning after (\ref{BN4}). 
\end{proof}  
  \vspace{.1in}
  
  \section{Convergence to the exponential distribution}
Here we complete the proof of Theorem 1.1. First we extend the method used in proving Proposition 4.1 to prove an analogous result for the half line problem. 
\begin{proposition}
Assume $c_\ve(x,t), \ x,t>0,$ and $X_{\ve,t}, \ t>0,$ are as in Lemma 4.2.   Then 
 \be \label{A5}
 \frac{X_{\ve,t}}{\langle X_{\ve,t}\rangle} \xrightarrow{D} \mathcal{X} \ , \quad  \ {\rm as \ \ } t\ra\infty \ .
 \ee
Let $g:[0,\infty)\ra\mathbb{R}^+$ be a continuous function which satisfies $\lim_{t\ra\infty} g(t)=\infty$.  Then one has
 \be \label{B5}
 \lim_{t\ra\infty}\sup_{x>g(t)}|\beta_{X_{\ve,t}}(x)-1| \ = \ 0  .
  \ee
\end{proposition}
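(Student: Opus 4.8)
The plan is to prove (\ref{B5}) first, by reducing the beta function of $X_{\ve,t}$ on the half‑ray $\{x>g(t)\}$ to the whole‑line computation behind (\ref{AB2}), and then to obtain the distributional limit (\ref{A5}) from (\ref{B5}) together with the uniform bound $\beta_{X_{\ve,t}}(\cdot)\le C$ of Lemma 4.2. The starting observation is that Proposition 4.2 gives $\La_\ve(T+\La_\ve(T))\ge[1+\del_0]\La_\ve(T)$ for large $T$, which is precisely the hypothesis (\ref{BY4}) for $A(\cdot)\equiv1/\La_\ve(\cdot)$, and Lemma 4.1 gives (\ref{A4}) for this $A$, so Proposition 5.3 applies: $q_\ve(x,y,T)/2x\to1$ as $T\to\infty$, uniformly for $0<x\le M$ and $y_0<y<y_\infty$ (any $M>0$, $0<y_0<y_\infty$). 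Since $x\mapsto q_\ve(x,y,T)$ is increasing, a diagonal argument (choose $M_n\uparrow\infty$ and then $T_n\uparrow\infty$ so that $q_\ve(x,y,T)\ge x$ whenever $x\le M_n$, $y_0<y<y_\infty$, $T\ge T_n$; then $q_\ve(g(T),y,T)\ge\min(g(T),M_n)$ for $T\ge T_n$) upgrades this to: for every continuous $g$ with $g(T)\to\infty$ and every $y_0>0$,
\[
\inf\{\,q_\ve(x,y,T):x\ge g(T),\ y_0<y<y_\infty\,\}\ \longrightarrow\ \infty\qquad(T\to\infty),
\]
hence, via (\ref{O3}), $\sup\{\,1-K_{\ve,D}(x,y,0,T):x\ge g(T),\ y_0<y<y_\infty\,\}\to0$.

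\emph{Proof of (\ref{B5}).} I would repeat the computation proving (\ref{AB2}). One has $\beta_{X_{\ve,T}}(x)=A_{\ve,D}(x,T)C_{\ve,D}(x,T)/B_{\ve,D}(x,T)^2$, where $A_{\ve,D},B_{\ve,D},C_{\ve,D}$ arise from $A_\ve,B_\ve,C_\ve$ of (\ref{AG2}), (\ref{AH2}) by inserting the factor $K_{\ve,D}$ (cf.\ (\ref{AD3}), (\ref{AE3})), the weight $\tilde c_\ve(y,0)$ being as in (\ref{AG2}). For $x>g(T)$ the inserted factors $K_{\ve,D}(x+x',y,0,T)$, $x'\ge0$, equal $1+o(1)$ uniformly for $y\in(y_0,y_\infty)$ by the display above (using monotonicity in $x$), while the contribution of $y\le y_0$ is relatively negligible because $\tilde c_\ve(y,0)$ concentrates at $y=y_\infty$ as $T\to\infty$; indeed $a(T)b(T)=\bigl(m_{2,A}(T)/m_{1,A}(T)\bigr)b(T)^2\to\infty$, since $m_{2,A}(T)/m_{1,A}(T)\to\infty$ and $b(T)=m_{1,A}(T)/\sig_A(T)\to C_\ve^{-1/2}>0$ by Lemma 4.1. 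Feeding these into (\ref{AI2})--(\ref{AJ2}) exactly as in the proof of (\ref{AB2}) --- which uses only the properties (\ref{A4}) (valid here by Lemma 4.1), the key point being that $\lim a(T)=\infty$ and $\lim b(T)/a(T)=0$ cause the factor $\bigl[(a(T)-b(T)y)/(\ve\sig_A(T))+x/(\ve\sig_A^2(T))\bigr]^{-1}$ in (\ref{AJ2}) to lose its $y$‑dependence --- yields $\beta_{X_{\ve,T}}(x)\to1$ as $T\to\infty$, uniformly for $x>g(T)$. This is (\ref{B5}).

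\emph{Proof of (\ref{A5}).} Put $Y_{\ve,t}=X_{\ve,t}/\La_\ve(t)$ and apply (\ref{B5}) with $g(t)=\sqrt{\La_\ve(t)}$ (continuous and $\to\infty$ by Lemma 4.1). Since $\beta$ is scale invariant, $\beta_{X_{\ve,t}}(u'\La_\ve(t))=\beta_{Y_{\ve,t}}(u')$, so by (\ref{Y2}),
\[
e_{\ve,t}(u):=E[Y_{\ve,t}-u\mid Y_{\ve,t}>u]\ =\ 1-\int_0^u\bigl(1-\beta_{X_{\ve,t}}(u'\La_\ve(t))\bigr)\,du' .
\]
Fix $\del\in(0,u)$: for $u'\in[\del,u]$ one has $u'\La_\ve(t)\ge\del\La_\ve(t)>g(t)$ for $t$ large, so the integrand $\to0$ uniformly there by (\ref{B5}); for $u'\in[0,\del]$ it is bounded by $1+C$, with $C$ the constant of Lemma 4.2. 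Hence $e_{\ve,t}\to1$ as $t\to\infty$, uniformly for $u$ in compact sets. By (\ref{Z2}) (with $h_{Y_{\ve,t}}(0)=\langle Y_{\ve,t}\rangle=1$) this gives $P(Y_{\ve,t}>u)=w_{Y_{\ve,t}}(u)=e_{\ve,t}(u)^{-1}\exp\!\bigl[-\int_0^u e_{\ve,t}(s)^{-1}\,ds\bigr]\to e^{-u}$, i.e.\ $Y_{\ve,t}\xrightarrow{D}\mathcal X$, which is (\ref{A5}).

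Given the machinery of $\S4$ and $\S5$, the proof is essentially assembly, and the one place that needs care is the first step: upgrading the ``$q_\ve\sim2x$ on bounded $x$'' of Proposition 5.3 to control of $K_{\ve,D}$ on the full range $\{x>g(T)\}$, for which the monotonicity of $q_\ve$ in $x$ is essential, together with the observation in the second step that the inserted $K_{\ve,D}$‑factors are harmless precisely because the source weight $\tilde c_\ve(\cdot,0)$ lives near $y=y_\infty>0$, where Proposition 5.3 is available. The genuinely hard analytic work --- the long‑time behaviour of $q_\ve$, equivalently of the ratio $K_{\ve,D}$ --- has already been carried out in Proposition 5.3.
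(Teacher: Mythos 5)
Your proposal is correct, and the proof of (\ref{B5}) follows the same route as the paper: feed the extra factor $K_{\ve,D}$ into the $A_\ve,B_\ve,C_\ve$ computation of Proposition~3.1, observe that $K_{\ve,D}\to1$ on the relevant range by Proposition~5.3 (which applies because Lemma~4.1 supplies (\ref{A4}) and Proposition~4.2 supplies (\ref{BY4}) for $A(\cdot)=1/\La_\ve(\cdot)$), and note that the contribution from $y$ near $0$ is negligible because the weight concentrates near $y_\infty$ (the paper expresses this as the monotonicity in $y$ of $\exp[b(T)xy/\ve\sig_A(T)]$; your calculation that $a(T)b(T)\to\infty$ is an equivalent way to see it). The subtlety you flag --- that Proposition~5.3's uniformity is only for bounded $x$, so one needs the monotonicity of $x\mapsto q_\ve(x,y,T)$ and a diagonal step to handle $x\ge g(T)$ --- is real and correctly handled.

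Your derivation of (\ref{A5}) is genuinely different from the paper's, and I think it is the cleaner of the two. The paper reproves the distributional limit from scratch: it first establishes the half-line analogue (\ref{C5}) of (\ref{AN2}), then computes $P(X_{\ve,y,T}>x)$ directly through the Gaussian conditional representation (\ref{G5}), shows (\ref{H5}) uniformly in $y$ via Proposition~5.3, and finally averages over $y$ using the identity (\ref{F5}) together with the monotonicity (maximum principle) of $y\mapsto u_\ve(y,0,T)$ coming from (\ref{D5})--(\ref{E5}). You instead take (\ref{B5}) as the primary result and run it backwards through (\ref{Y2})--(\ref{Z2}): the scale invariance of $\beta$, the identity $e_{\ve,t}(u)=1-\int_0^u(1-\beta_{Y_{\ve,t}})$, the $L^\infty$ bound on $\beta$ from Lemma~4.2 (to control the near-origin piece $u'\le\del$ where (\ref{B5}) is silent), and the exact formula $P(Y>u)=e_{\ve,t}(u)^{-1}\exp[-\int_0^u e_{\ve,t}^{-1}]$ obtained from (\ref{Z2}). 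This avoids repeating the Gaussian estimates entirely; the only extra ingredient beyond (\ref{B5}) is Lemma~4.2, which the paper has already proved. The trade-off is that the paper's route establishes (\ref{A5}) without using the full strength of (\ref{B5}) (it proves (\ref{A5}) first and (\ref{B5}) second), whereas yours requires (\ref{B5}) first; but logically there is no circularity and your ordering is sound.
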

\begin{proof}
We first show that (\ref{AN2}) holds for the half line problem.  To do this we write $\La_\ve(T)$ as the ratio of (\ref{D3}) to (\ref{C3}).  Note that Lemma 4.1 implies that (\ref{A4}) holds for $A(\cdot)=1/\La_\ve(\cdot)$, and Proposition 4.2 implies that (\ref{BY4}) also holds when  $A(\cdot)=1/\La_\ve(\cdot)$. Hence the conclusion of Proposition 5.3 holds when $A(\cdot)=1/\La_\ve(\cdot)$. It follows then from (\ref{E3}), Lemma 4.1 and Proposition 5.3 that
\be \label{C5}
\lim_{T\ra\infty} \frac{m_{2,A}(T)E[X_{\ve,y,T}]}{\ve \sig^2_A(T)} \ = \ 1 \ ,
\ee
with the limit in (\ref{C5})  being uniform in $y$ for $y$ in any interval $0<y_0<y<y_\infty$. The function $(y,t)\ra u_\ve(y,t,T), \ y>0,  \ t<T,$ defined by (\ref{B3}) is the solution to the terminal value problem
\be \label{D5}
\frac{\pa u_\ve(y,t)}{\pa t}+[A(t)y-1]\frac{\pa u_\ve(y,t)}{\pa y}+\frac{\ve}{2}\frac{\pa^2 u_\ve(y,t)}{\pa y^2} \ = \ 0, \quad y>0, \ t<T,
\ee
\be \label{E5}
u_\ve(y,T) \ = \ u_T(y), \quad y\in\R \ ,
\ee
with zero Dirichlet condition $u_\ve(0,t)=0, \ t<T$, and terminal condition $u_\ve(\cdot,T)\equiv 1$. By the maximum principle \cite{pw} we see that for any $t<T$ the function $y\ra u_\ve(y,t,T)$ is increasing. Now (\ref{AN2}) follows from (\ref{G3}), (\ref{C5}). 

To prove (\ref{A5}) we use a similar identity to (\ref{AF2}), 
\begin{multline} \label{F5}
P\left(\frac{X_{\ve,T}}{\langle X_{\ve,T} \rangle}> x\right) \ 
 =  \\ \int_0^{y_\infty} P\left(\frac{X_{\ve,y,T}}{\langle X_{\ve,T} \rangle}> x\right) u_\ve(y,0,T)c_\ve(y,0) \ dy 
  \Bigg / \int_0^{y_\infty} u_\ve(y,0,T)c_\ve(y,0) \ dy  \ .
\end{multline}
We have for $x>0$ that
\begin{multline} \label{G5}
P\left(X_{\ve,y,T} >x\right) \ = \\
 \frac{E[K_{\ve,D}( \sqrt{\ve}\sig_A(T)\{Z-z_{y,T}/\sqrt{\ve}\},y,0,T)H(\sqrt{\ve}\sig_A(T))\{Z-z_{y,T}/\sqrt{\ve}\}-x \ | \ Z>z_{y,T}/\sqrt{\ve}]}
 {E[K_{\ve,D}( \sqrt{\ve}\sig_A(T)\{Z-z_{y,T}/\sqrt{\ve}\},y,0,T) \ | \ Z>z_{y,T}/\sqrt{\ve}]} \ ,
\end{multline}
where $H:\mathbb{R}\ra\mathbb{R}$ is the Heaviside function. As with proving the limit (\ref{C5}), we conclude from (\ref{AN2}), (\ref{G5}) and Proposition 5.3 that
\be \label{H5}
\lim_{T\ra\infty} P\left(\frac{X_{\ve,y,T}}{\langle X_{\ve,T} \rangle}> x\right) \ = \ e^{-x} \quad {\rm for \ } x>0 \ ,
\ee
and the limit is uniform in any interval $0<y_0<y<y_\infty$.  The convergence in distribution (\ref{A5}) follows from (\ref{F5}), (\ref{H5}) upon using the monotonicity of the function $y\ra u_\ve(y,0,T)$ again. 

 To begin the proof of (\ref{B5}) we first note we cannot set $g(\cdot)\equiv0$ as in Proposition 2.1 since the zero Dirichlet boundary condition implies that $\beta_{X_{\ve,T}}(0)=0$ for $T>0$.  Similarly to  (\ref{AG2}), (\ref{AH2}) we observe that $\beta_{X_{\ve,T}}(x)=A_{\ve,D}(x,T)C_{\ve,D}(x,T)/B_{\ve,D}(x,T)^2$. The functions $A_{\ve,D} ,B_{\ve,D}$ are as in (\ref{AD3}), (\ref{AE3}), while $C_{\ve,D}$ is given by the formula
\begin{multline} \label{I5}
C_{\ve,D}(x,T) \ = \ \int_0^{y_\infty}dy \int_0^\infty dx'  \ K_{\ve,D}(x+x',y,0,T) \\
\times x'\exp\left[\frac{b(T)(x+x')y}{\ve\sig_A(T)}-\frac{a(T)x'}{\ve\sig_A(T)}-\frac{x'(2x+x')}{2\ve\sig^2_A(T)}\right]\tilde{c}_\ve(y,0) \ .
\end{multline}
Comparing (\ref{AD3}), (\ref{AE3}), (\ref{I5}) to (\ref{AG2}), (\ref{AH2}) we see from Proposition 5.3 and the argument  of Proposition 3.1 that for any $\del>0$ there exists $x_\del,T_\del>0$ such that $\sup_{x\ge x_\del}|\beta_{X_{\ve,T}}(x)-1|<\del$ for $T\ge T_\del$. Note that to conclude this we use  the fact that the function $y\ra\exp[b(T)xy/\sqrt{\ve}\sig_A(T)]$ is increasing for all $x>0$. The limit (\ref{B5}) evidently follows.   
\end{proof}
\begin{proposition}
Assume $c_\ve(x,t), \ x,t>0,$ and $X_{\ve,t}, \ t>0,$ are as in Lemma 4.2.   Then 
\be \label{J5}
\lim_{t\ra\infty} \frac{d}{dt}  \langle X_{\ve,t}\rangle \ = \ 1 \ .
\ee
\end{proposition}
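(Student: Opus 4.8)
The plan is to deduce everything from the rate formula (\ref{H1}), the mass constraint (\ref{B1}), and the two convergence inputs already available: Lemma 4.1 (so that (\ref{A4}) holds for $A\equiv 1/\La_\ve$) together with Proposition 4.2 and Proposition 5.3 (so that $q_\ve(x,y,T)/2x\ra 1$ and $\pa_x q_\ve(0,y,T)\ra 2$). Since $\av{X_{\ve,t}}=\La_\ve(t)$, I would first rewrite the rate. Using $K_{\ve,D}(0,y,0,t)=0$ and $q_\ve(0,y,t)=0$ in the representation (\ref{H3}), one gets as in (\ref{P3}) that $\frac{\ve}{2}\pa_x c_\ve(0,t)=\frac12\int_0^{y_\infty}\pa_x q_\ve(0,y,t)\,G_\ve(0,y,0,t)c_\ve(y,0)\,dy$. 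Completing the square in the Gaussian (\ref{D2}) exactly as in the passage leading to (\ref{AG2})--(\ref{AH2}), the common factor $\exp[-a(t)^2/2\ve]/\sqrt{2\pi\ve\sig_A^2(t)}$ appears in each of $\int_0^\infty c_\ve(x,t)\,dx$, $\int_0^\infty xc_\ve(x,t)\,dx$ and $\frac{\ve}{2}\pa_x c_\ve(0,t)$, with the remaining factors being $B_{\ve,D}(0,t)$ of (\ref{AE3}), $C_{\ve,D}(0,t)$ of (\ref{I5}), and $\frac12 Q(t)$, where $Q(t):=\int_0^{y_\infty}\pa_x q_\ve(0,y,t)\,\tilde c_\ve(y,0)\,dy$. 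The constraint (\ref{B1}), i.e. $\int_0^\infty xc_\ve(x,t)\,dx=1$, forces $\exp[-a(t)^2/2\ve]/\sqrt{2\pi\ve\sig_A^2(t)}=1/C_{\ve,D}(0,t)$, and hence (\ref{H1}) collapses to
\be
\frac{d\La_\ve(t)}{dt} \ = \ \frac{Q(t)\,C_{\ve,D}(0,t)}{2\,B_{\ve,D}(0,t)^2} \ .
\ee
This is precisely the whole-line identity $\beta_{X_{\ve,t}}(0)=A_{\ve,D}(0,t)C_{\ve,D}(0,t)/B_{\ve,D}(0,t)^2$ with the vanishing boundary term $A_{\ve,D}(0,t)=0$ replaced by its boundary-layer regularization $Q(t)/2$. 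Writing $\hat c(t)=\int_0^{y_\infty}\tilde c_\ve(y,0)\,dy$, it therefore suffices to prove the four limits $Q(t)/2\hat c(t)\ra 1$, $C_{\ve,D}(0,t)/C_\ve(0,t)\ra 1$, $B_{\ve,D}(0,t)/B_\ve(0,t)\ra 1$, and $\hat c(t)C_\ve(0,t)/B_\ve(0,t)^2\ra 1$; the last of these is immediate, since (\ref{AJ2}) at $x=0$ together with $b(t)/a(t)\ra 0$ gives $B_\ve(0,t)=(1+o(1))(\ve\sig_A(t)/a(t))\hat c(t)$ and $C_\ve(0,t)=(1+o(1))(\ve^2\sig_A^2(t)/a(t)^2)\hat c(t)$, and it is just the content of Proposition 3.1 applied to $A\equiv 1/\La_\ve$.

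For $Q(t)$ I would argue as follows. By concavity of $x\ra q_\ve(x,y,t)$ (Theorem 1.1 of \cite{cd}), $\pa_x q_\ve(0,y,t)=\sup_{x>0}q_\ve(x,y,t)/x\ge q_\ve(M,y,t)/M$ for any fixed $M>0$, which by Proposition 5.3 is $2+o(1)$ uniformly on compact subintervals of $(0,\infty)$ in $y$; combined with the bound $q_\ve(x,y,t)\le -2\la(0,y,t)x$ of Proposition 3.3 of \cite{cd} and with (\ref{A4})(b),(d),(e), this gives $0\le\pa_x q_\ve(0,y,t)\le -2\la(0,y,t)\ra 2$, and $\pa_x q_\ve(0,y,t)\ra 2$ uniformly for $y$ in a neighborhood of $y_\infty$, taken to be the essential supremum of $\mathrm{supp}\,c_\ve(\cdot,0)$ (apply Proposition 5.3 with a right endpoint exceeding $y_\infty$). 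It remains to see that the probability measure $\mu_t(dy)=\tilde c_\ve(y,0)\,dy/\hat c(t)$ concentrates at $y_\infty$: since $\tilde c_\ve(y,0)=e^{\psi_t(y)}c_\ve(y,0)$ with $\psi_t'(y)=\ve^{-1}b(t)(a(t)-b(t)y)>0$ on $[0,y_\infty]$ for $t$ large, and $\psi_t'(y_\infty-\del)\ra\infty$ because $a(t)\ra\infty$ while $b(t)$ stays bounded below by (\ref{A4})(b),(e), the ratio $\mu_t([0,y_\infty-\del])/\mu_t([y_\infty-\del/2,y_\infty])$ is $O\big(e^{\psi_t(y_\infty-\del)-\psi_t(y_\infty-\del/2)}\big)\ra 0$, so $\mu_t([0,y_\infty-\del])\ra 0$ for every $\del>0$. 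Splitting $Q(t)/\hat c(t)=\int\pa_x q_\ve(0,y,t)\,\mu_t(dy)$ at $y_\infty-\del$ and using the uniform bound $\pa_x q_\ve(0,y,t)\le 3$ (valid for $t$ large) then yields $Q(t)/2\hat c(t)\ra 1$.

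Finally I would compare the Dirichlet quantities to the whole-line ones. With $w_t(x',y)=\exp[-(a(t)-b(t)y)x'/\ve\sig_A(t)-x'^2/2\ve\sig_A^2(t)]$ one has $B_\ve(0,t)-B_{\ve,D}(0,t)=\int_0^{y_\infty}\big(\int_0^\infty e^{-q_\ve(x',y,t)/\ve}w_t(x',y)\,dx'\big)\tilde c_\ve(y,0)\,dy$, and likewise for $C$ with an extra factor $x'$. By Proposition 5.3, once $t$ is large one has $q_\ve(x',y,t)\ge 2(1-\eta)x'$ for $x'\le M$, $y\ge y_0$, and $q_\ve(x',y,t)\ge q_\ve(M,y,t)\ge 2(1-\eta)M$ for $x'\ge M$ by monotonicity of $q_\ve$ in $x'$; using $0\le w_t\le 1$, $\int_0^\infty w_t(x',y)\,dx'=(1+o(1))\ve\sig_A(t)/(a(t)-b(t)y)$ and $\int_0^\infty x'w_t(x',y)\,dx'=(1+o(1))\ve^2\sig_A^2(t)/(a(t)-b(t)y)^2$ (from (\ref{AI2}) as in (\ref{AJ2})), the inner integrals for $y\ge y_0$ are bounded by $\ve/2(1-\eta)+e^{-2(1-\eta)M/\ve}O(\ve\sig_A/a)$ and $\ve^2/4(1-\eta)^2+e^{-2(1-\eta)M/\ve}O(\ve^2\sig_A^2/a^2)$ respectively, while the $y<y_0$ part contributes $O((\ve\sig_A/a)\hat c(t)\,\mu_t([0,y_0]))=o(B_\ve(0,t))$ by the concentration above. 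Dividing by $B_\ve(0,t)=(1+o(1))(\ve\sig_A/a)\hat c(t)$ and $C_\ve(0,t)=(1+o(1))(\ve^2\sig_A^2/a^2)\hat c(t)$, the relative errors are $O(a(t)/\sig_A(t))+O(e^{-2(1-\eta)M/\ve})=O(m_{2,A}(t)/\sig_A^2(t))+O(e^{-2(1-\eta)M/\ve})$; the first term vanishes by (\ref{A4})(d) and the second is made arbitrarily small by taking $M$ large, so $C_{\ve,D}(0,t)/C_\ve(0,t)\ra 1$ and $B_{\ve,D}(0,t)/B_\ve(0,t)\ra 1$, and combining the four limits gives $d\La_\ve(t)/dt\ra 1$. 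The step I expect to be the main obstacle is exactly this comparison: the Dirichlet deficit $1-K_{\ve,D}=e^{-q_\ve/\ve}$ is confined to an $O(\ve)$-thick boundary layer in $x'$, which forces the use of the genuinely linear lower bound $q_\ve(x',y,t)\ge 2(1-\eta)x'$ on a fixed $x'$-interval rather than merely the pointwise limit $q_\ve/2x'\ra 1$, whereas the Gaussian weight $w_t$ spreads over the much larger scale $\ve\sig_A(t)/a(t)$, and it is the vanishing of the ratio $a(t)/\sig_A(t)=m_{2,A}(t)/\sig_A^2(t)$ from (\ref{A4})(d) that makes the layer contribution asymptotically irrelevant.
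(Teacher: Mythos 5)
Your proof is correct, but it takes a genuinely different route from the paper's. The paper's proof of Proposition 6.2 (like that of Proposition 4.1) first rescales so that $\La_\ve(T)$ becomes order one at a fixed time $T_0$ and the effective diffusion coefficient $\ve/\La_\ve(T)$ is small; it then works with the boundary-layer estimates of Propositions 5.1 and 6.1 of \cite{cd} that are valid for $T\le 1$, $\ve\le T^3$, controls $I_\ve, J_\ve$ through $w_\ve(\al(T))$ and $h_\ve(\al(T))$, and closes the argument using the near-constancy of $\beta_{X_{\ve,t}}$ supplied by (\ref{B5}) of Proposition 6.1. You instead work directly at $T\ra\infty$ without rescaling. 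The exact algebraic identity $d\La_\ve/dt=Q(t)C_{\ve,D}(0,t)/(2B_{\ve,D}(0,t)^2)$ you derive from (\ref{H1}), the constraint (\ref{B1}), and the Gaussian factorization is correct (indeed $h_{X_{\ve,t}}(0)=1$ forces the common prefactor to equal $1/C_{\ve,D}(0,t)$, and $\frac{\ve}{2}\pa_x c_\ve(0,t)=Q(t)/(2C_{\ve,D}(0,t))$ since $q_\ve(0,\cdot,\cdot)=0$). Each of the four limits you then isolate is proved cleanly: the $Q$-limit uses the concavity of $x\mapsto q_\ve$ (so $\pa_x q_\ve(0,y,t)\ge q_\ve(M,y,t)/M$), the upper bound $\pa_x q_\ve(0,y,t)\le-2\la(0,y,t)$, the uniform limit from Proposition 5.3, and the exponential concentration of $\mu_t(dy)\propto\tilde c_\ve(y,0)\,dy$ near $y_\infty$ (which follows because $a(T)\ra\infty$ and $b(T)$ is bounded away from $0$ and $\infty$ by (\ref{A4})(b),(e)); the $B_{\ve,D}/B_\ve$ and $C_{\ve,D}/C_\ve$ limits correctly pit the $O(\ve)$-scale of the Dirichlet deficit $e^{-q_\ve/\ve}$ against the $O(\ve\sig_A/a)$-scale of the Gaussian weight, with the ratio killed by $a(T)/\sig_A(T)=m_{2,A}(T)/\sig_A^2(T)\ra 0$ from (\ref{A4})(d); and the last limit $\hat c\, C_\ve/B_\ve^2\ra 1$ follows from (\ref{AJ2}) and the uniform estimate $a(T)-b(T)y=a(T)(1+o(1))$ on $[0,y_\infty]$, exactly as in Proposition 3.1. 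A notable structural difference is that your argument bypasses Proposition 6.1 entirely (using only Lemma 4.1, Proposition 4.2, Proposition 5.3, and the whole-line computation of Proposition 3.1), whereas the paper's proof relies on Proposition 6.1's uniform beta-function convergence (\ref{B5}) as an input; this makes your route somewhat more self-contained, at the cost of re-deriving for the rate what the paper recycles from (\ref{B5}). What the paper's rescaling approach buys instead is access to the short-time, small-$\ve$ estimates of Propositions 5.1 and 6.1 of \cite{cd}, which give sharper quantitative control over $\pa_x q_\ve(0,y,\cdot)$ than the asymptotic $T\ra\infty$ statement of Proposition 5.3.
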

\begin{proof}
We proceed similarly to the proof of Proposition 4.1, whence  we may assume that (\ref{M3}) holds. Also from (\ref{B5}) of Proposition 6.1 we may additionally assume that
\be \label{K5}
\ve<\nu \quad {\rm and \ } \sup_{x>\del}|\beta_{X_{\ve,0}}(x)-1|\le \rho,
\ee
where $\rho,\nu,\del>0$ may be chosen arbitrarily small.  As in (\ref{I2}) we have that
\begin{multline} \label{L5}
\int_0^\infty dy \int_0^\infty dx  \ G_\ve(x,y,0,T)c_\ve(y,0)  \\
 = \ \int_0^\infty dy \  P\left(Z> \frac{m_{2,A}(T)-m_{1,A}(T)y}{\sqrt{\ve}\sig_A(T)}\right) \ c_\ve(y,0) \ = \ I_\ve(T) \ ,
\end{multline}
where $A(\cdot)=1/\La_\ve(\cdot)$ is decreasing and $A(0)=1$.  We define the function $w_\ve(\cdot)$ by
\be \label{P5}
w_\ve(y) \ = \ \int_y^\infty c_\ve(y',0) \ dy' \quad {\rm for \ } y\ge 0 \ .
\ee
The integral  on the RHS of (\ref{L5}) is bounded above as
\be \label{M5}
I_\ve(T) \ \le \  P\left(Z>\frac{1}{\ve^{1/4}}\right)w_\ve(0)+w_\ve\left(\al(T)-\ve^{1/4}\beta(T)\right) \ ,
\ee
where
\be \label{N5}
\al(T) \ =  \ \frac{m_{2,A}(T)}{m_{1,A}(T)} \ , \quad \beta(T) \ = \ \frac{\sig_A(T)}{m_{1,A}(T)} \ .
\ee
Similarly we have a lower bound
\be \label{O5}
I_\ve(T) \ \ge \  \left[1-P\left(Z>\frac{1}{\ve^{1/4}}\right)\right]w_\ve\left(\al(T)+\ve^{1/4}\beta(T)\right) \ .
\ee

We obtain using (\ref{M3}) bounds for $I_\ve(T)$ in terms of $w_\ve(\al(T))$.    To see this first observe from (\ref{W2})-(\ref{Z2})  the identity
\be \label{Q5}
w_\ve(x) \ = \ \frac{w_\ve(0) v_\ve(x)}{v_\ve(0)}\exp\left[-\int_0^x v_\ve(x') \ dx'\right] \ , \quad x> 0 \ .
\ee
From (\ref{Y2}), (\ref{M3}) we see that $v_\ve(\cdot)$ has the properties
\be \label{R5}
1-C_1 \ \le \ \frac{1}{v_\ve(x)^2}\frac{dv_\ve(x)}{dx} \ \le  \ 1 \ , \ \ x>0, \quad v_\ve(0) \ = \ 1 \ ,
\ee
where $C_1$ is the constant in (\ref{M3}).  It follows from (\ref{R5}) that
\be \label{S5}
v_\ve(x) \  \le  \ 2, \quad \left|\frac{dv_\ve(x)}{dx}\right| \ \le \  4(C_1+1) \ ,  \quad {\rm for \ } 0<x<1/2 \ .
\ee
Choosing $x_0=1/8(C_1+1)$, we have from (\ref{R5}), (\ref{S5}) that $1/2\le v_\ve(x)\le 3/2$ for $0\le x\le x_0$.  Applying (\ref{Q5}), (\ref{S5}) to (\ref{M5}), (\ref{O5}) we have that
\begin{multline} \label{T5}
 \left[1-P\left(Z>\frac{1}{\ve^{1/4}}\right)\right]\left\{1-8(C_1+1)\ve^{1/4}\beta(T)\right\}\exp\left[-3\ve^{1/4}\beta(T)/2\right]
 \\
 \le \ \frac{I_\ve(T)}{w_\ve\left(\al(T)\right)} \ \le \ 2e^{3x_0/2}P\left(Z>\frac{1}{\ve^{1/4}}\right)+\left\{1+8(C_1+1)\ve^{1/4}\beta(T)\right\}\exp\left[3\ve^{1/4}\beta(T)/2\right] \ ,
\end{multline}
provided $\al(T)+\ve^{1/4}\beta(T)\le x_0$. 

A similar argument may be made to estimate the integral
\be \label{U5}
J_\ve(T) \ = \ \int_0^\infty dy \int_0^\infty dx  \ x \ G_\ve(x,y,0,T)c_\ve(y,0) \ 
\ee
in terms of the function $h_\ve(\cdot)$ defined by 
\be \label{V5}
h_\ve(y) \ = \ \int_x^\infty w_\ve(y') \ dy' \ , \quad y\ge 0.
\ee
To do this we first observe that
\begin{multline} \label{W5}
\int_0^\infty dx  \ x \ G_\ve(x,y,0,T) \ = \ m_{1,A}(T)\left[y-\al(T)\right] P\left(Z> \frac{m_{2,A}(T)-m_{1,A}(T)y}{\sqrt{\ve}\sig_A(T)}\right) \\
+ \left(\frac{\ve \sig^2_A(T)}{2\pi}\right)^{1/2}\exp\left[-\frac{\{m_{2,A}(T)-m_{1,A}(T)y\}^2}{2\ve\sig_A^2(T)}\right] \ .
\end{multline}
We conclude from (\ref{U5})-(\ref{W5}) that
\be \label{X5}
J_\ve(T) \ \le \  \left(\frac{\ve \sig^2_A(T)}{2\pi}\right)^{1/2}w_\ve(0)+m_{1,A}(T)h_\ve\left(\al(T)\right) \ .
\ee
Similarly to (\ref{O5}) we also have from (\ref{W5})  a lower bound
\be \label{Y5}
J_\ve(T) \ \ge \  \left[1-P\left(Z>\frac{1}{\ve^{1/4}}\right)\right]m_{1,A}(T)h_\ve\left(\al(T)+\ve^{1/4}\beta(T)\right)  \ .
\ee
Assuming again that  $\al(T)+\ve^{1/4}\beta(T)\le x_0$, we bound $J_\ve(T)$ using (\ref{Z2}), (\ref{S5}) as
\begin{multline} \label{Z5}
 \left[1-P\left(Z>\frac{1}{\ve^{1/4}}\right)\right]\exp\left[-3\ve^{1/4}\beta(T)/2\right]
 \\
 \le \ \frac{J_\ve(T)}{m_{1,A}(T)h_\ve\left(\al(T)\right)} \ \le \  3e^{3x_0/2}\left(\frac{\ve \sig^2_A(T)}{2\pi m_{1,A}(T)^2}\right)^{1/2}+1 \ .
\end{multline}

Next as in (\ref{P3})  we use the formula
\be \label{AA5}
\frac{\ve}{2}\frac{\pa c_\ve(0,T)}{\pa x} \ = \ \frac{1}{2}\int_0^{\infty} \frac{\pa q_\ve(0,y,T)}{\pa x} G_\ve(0,y,0,T)c_\ve(y,0) \ dy \ 
\ee
to estimate the LHS of (\ref{AA5}) in terms of $c_\ve(\al(T),0)$. We write the RHS of (\ref{AA5})  as a sum of the integral over the interval $\al(T)-\ve^{1/4}\beta(T) < y<\al(T)+\ve^{1/4}\beta(T)$ and the integral over the complement of this interval in $\mathbb{R}^+$. From Proposition 5.1 of \cite{cd} this latter integral is bounded above by
\be \label{AB5}
\frac{1}{\sqrt{2\pi\ve\sig^2_A(T)}} \exp\left[-\frac{1}{2\ve^{1/2}}\right]\left[w_\ve(0)+\frac{m_{1,A}(T)}{\sig^2_A(T)}h_\ve(0)\right] \ .
\ee
Again using Proposition 5.1. of \cite{cd}, the former integral is bounded above by
\be \label{AC5}
\frac{1}{m_{1,A}(T)}\left[1+\frac{\ve^{1/4}}{\sig_A(T)}\right]\sup_{|y-\al(T)|<\ve^{1/4}\beta(T)} c_\ve(y,0) \  .
\ee
To obtain a lower bound on the RHS of (\ref{AA5}) we use Proposition 6.1 of \cite{cd}. Thus since $\sup A(\cdot)\le 1$,  there exist universal constants 
$C_1,C_2>0$ such that 
\be \label{AD5}
\frac{1}{2} \frac{\pa q_\ve(0,y,T)}{\pa x} \ \ge \  1+\frac{m_{1,A}(T)[y-\al(T)]}{\sig^2_A(T)}-\frac{C_1\ve T^2}{y^2} \ ,
\ee
provided $0<T\le 1,  \ y\ge C_2T^2, \ \ve\le T^3$. 
Similarly to (\ref{AC5}) one obtains from (\ref{AD5}) the lower bound
\be \label{AE5}
\frac{1}{m_{1,A}(T)} \left[1-P\left(|Z|>\frac{1}{\ve^{1/4}}\right)- \frac{\ve^{1/4}}{\sig_A(T)}    -\frac{C_1\ve T^2}{\{\al(T)-\ve^{1/4}\beta(T)\}^2}\right]\inf_{|y-\al(T)|<\ve^{1/4}\beta(T)} c_\ve(y,0) \  .
\ee
We may bound $c_\ve(\cdot,0)$ in terms of the beta function $\beta_{X_{\ve,0}}(\cdot)=c_\ve(\cdot.0)h_\ve(\cdot)/w_\ve(\cdot)^2$ of (\ref{X2}).  Thus from (\ref{Z2}), (\ref{Q5}), (\ref{S5}) we have
\begin{multline} \label{AF5}
\sup_{|y-\al(T)|<\ve^{1/4}\beta(T)} c_\ve(y,0) \ \le \  \left\{1+8(C_1+1)\ve^{1/4}\beta(T)\right\}^2 \\
\times \exp\left[9\ve^{1/4}\beta(T)/2\right]\frac{w_\ve(\al(T))^2}{h_\ve(\al(T))}\sup_{|y-\al(T)|<\ve^{1/4}\beta(T)} \beta_\ve(y,0) \ ,
\end{multline}
provided $\al(T)+\ve^{1/4}\beta(T)\le x_0$. Similarly one obtains a lower bound
\begin{multline} \label{AG5}
\inf_{|y-\al(T)|<\ve^{1/4}\beta(T)} c_\ve(y,0) \ \ge \  \left\{1-8(C_1+1)\ve^{1/4}\beta(T)\right\}^2 \\
\times \exp\left[-9\ve^{1/4}\beta(T)/2\right]\frac{w_\ve(\al(T))^2}{h_\ve(\al(T))}\inf_{|y-\al(T)|<\ve^{1/4}\beta(T)} \beta_\ve(y,0) \ .
\end{multline}

We choose now $T_0,\nu>0$  such that
\be \label{AH5}
0<T_0\le 1, \quad 2C_2T_0^2 \ \le \ \al(T_0)\le \frac{x_0}{2} \ ,  \quad \nu^{1/4}\beta(T_0) \ \le \ \ \frac{\al(T_0)}{2} \ .
\ee
Note that the choice in (\ref{AH5}) is possible since $\lim_{T\ra 0}\al(T)/T=1$.  The inequalities (\ref{AF5}), (\ref{AG5}) both hold for $T=T_0$ and $\ve$ satisfying (\ref{K5}). From (\ref{B1}), (\ref{D1}) we have that
\be \label{AI5}
\frac{d}{dt}\langle X_{\ve,t}\rangle \Big|_{t=T_0} \ = \ \frac{\ve}{2}\frac{\pa c_\ve(0,T_0)}{\pa x} \frac{J_{\ve,D}(T_0)}{I_{\ve,D}(T_0)^2}\  , 
\ee
where $I_{\ve,D}(T), \ J_{\ve,D}(T)$ are defined as in (\ref{L5}), (\ref{U5}), but with the half line Dirichlet Green's function $G_{\ve,D}$ replacing the whole line Green's function $G_\ve$.  Evidently we have that $I_{\ve,D}(T)\le I_\ve(T),  \ J_{\ve,D}(T)\le J_\ve(T)$, whence (\ref{T5}), (\ref{Z5}) yield upper bounds on  $I_{\ve,D}(T), \ J_{\ve,D}(T)$. We can also see from the lower bound on $q_\ve(x,y,T)$ in Proposition 3.3 of \cite{cd} and (\ref{O3}) that the lower bounds in  (\ref{T5}), (\ref{Z5}) also hold with $T=T_0$ and $\nu$ small, up to  a multiplicative factor close to $1$. We conclude that the LHS of (\ref{AI5}) is equal to $1$ modulo terms in the parameters $\nu,\rho$ of (\ref{K5}) which converge to zero as $\nu,\rho\ra 0$.  Now (\ref{J5}) follows by arguing as in the proof of Lemma 7.3 of \cite{cdw}. 
\end{proof}
\appendix

\section{Properties of Gaussian Conditional Variables}
Let $Z$ be the standard normal variable and for $z\in\mathbb{R}$ let $X_z$ be the random variable $Z-z$ conditioned on $Z>z$.  Here we derive some properties of the variables $X_z$.
\begin{lem}
Let $m:\mathbb{R}\ra\mathbb{R}$ be the function $m(z)=\langle X_z\rangle, \ z\in\mathbb{R}$.  Then $m(\cdot)$ is a continuous positive decreasing function satisfying the inequalities,
\be \label{A8}
\frac{1}{z}-\frac{2}{z^3} \ <  \ m(z) \ < \ \frac{1}{z} \quad {\rm for \ } z>1 \ ,
\ee
\be \label{B8}
\max\{|z|, \sqrt{2/\pi}\}\ <  \ m(z) \ < \  \sqrt{2/\pi}+|z| \quad {\rm for \ } z<0 \ .
\ee
\end{lem}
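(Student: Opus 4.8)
The plan is to reduce everything to elementary facts about the inverse Mills ratio $r(z)=\vp(z)/\bar\Phi(z)$, where $\vp(z)=(2\pi)^{-1/2}e^{-z^2/2}$ and $\bar\Phi(z)=P(Z>z)$. From $\int_z^\infty t\vp(t)\,dt=\vp(z)$ one gets $E[Z\mid Z>z]=r(z)$, hence $m(z)=E[Z-z\mid Z>z]=r(z)-z$; continuity is then immediate and positivity is clear since $X_z$ is a.s.\ positive, which also gives $r(z)>z$ for all $z$. Differentiating $r$ with $\vp'=-z\vp$, $\bar\Phi'=-\vp$ yields $r'(z)=r(z)^2-zr(z)=r(z)m(z)>0$, so $r$ is strictly increasing, and also the Riccati identity $m'(z)=r'(z)-1=m(z)^2+zm(z)-1$. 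For monotonicity of $m$ I would write $m(z)=\int_0^\infty \bar\Phi(z+s)/\bar\Phi(z)\,ds=\int_0^\infty\exp\!\big[-\!\int_z^{z+s}r(u)\,du\big]ds$; since $r$ is strictly increasing, $\int_z^{z+s}r(u)\,du>s\,r(z)$ for $s>0$, whence $m(z)<1/r(z)$, i.e.\ $m'(z)=r(z)m(z)-1<0$. Thus $m$ is continuous, positive and strictly decreasing on $\R$ (this is the IFR$\Rightarrow$DMRL property for the Gaussian, but the above gives it directly). Finally $r(0)=\vp(0)/\bar\Phi(0)=\sqrt{2/\pi}$, so $m(0)=\sqrt{2/\pi}$.

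The inequality $m(z)<1/r(z)$ already yields the upper bound in (\ref{A8}): since $r(z)=m(z)+z$ it reads $m(z)^2+zm(z)-1<0$, so $m(z)<\tfrac12\big(-z+\sqrt{z^2+4}\big)=2/\big(z+\sqrt{z^2+4}\big)<1/z$ for $z>0$ (and in particular $m(z)\to0$ as $z\to\infty$). For (\ref{B8}) I would use $m(z)=r(z)+|z|$ when $z<0$. Because $r$ is strictly increasing with $r(0)=\sqrt{2/\pi}$, one has $r(z)<\sqrt{2/\pi}$ for $z<0$, giving $m(z)<\sqrt{2/\pi}+|z|$; and $r(z)>0$ gives $m(z)>|z|$, while $m$ decreasing with $m(0)=\sqrt{2/\pi}$ gives $m(z)>\sqrt{2/\pi}$, so $m(z)>\max\{|z|,\sqrt{2/\pi}\}$.

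The genuinely harder point, which I expect to be the main obstacle, is the lower bound $m(z)>1/z-2/z^3$ in (\ref{A8}). For $1<z\le\sqrt2$ it is trivial since $1/z-2/z^3\le0<m(z)$, so assume $z>\sqrt2$ and set $v(z)=1/z-2/z^3$. A short computation gives $v'(z)-\big(v(z)^2+zv(z)-1\big)=10z^{-4}-4z^{-6}=:\eta(z)>0$ for $z\ge1$, i.e.\ $v$ is a strict subsolution of the Riccati equation for $m$. With $\del=m-v$ one then gets $\del'=(m+v+z)\,\del-\eta$, where $m+v+z>0$ for $z>\sqrt2$. The key bookkeeping is a comparison-from-infinity argument: if $\del(z_1)\le0$ for some $z_1>\sqrt2$ then $\del$ can never return to $0$ (at a zero of $\del$ one has $\del'=-\eta<0$), so $\del\le0$ and therefore $\del'\le-\eta<0$ on $[z_1,\infty)$, forcing $\limsup_{z\to\infty}\del(z)<0$; this contradicts $\del(z)=m(z)-v(z)\to0$ established in the previous paragraph. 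Hence $\del(z)>0$ for all $z>\sqrt2$, completing (\ref{A8}). (An alternative would be to iterate the exact identity $m(z)e^{-z^2/2}=\int_z^\infty\!\big(1-m(s)^2\big)e^{-s^2/2}ds$ together with the crude bound $m(s)<1/s$, but getting the clean constant $-2/z^3$ out of it requires more algebra than the Riccati comparison.)
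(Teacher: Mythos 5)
Your proof is correct, but it differs from the paper's in several places, most notably in the two hardest steps. For monotonicity and the upper bound $m(z)<1/z$ the paper works from the moment identity $\langle X_z^2\rangle+z\langle X_z\rangle=1$ (their (\ref{H8})) together with $\langle X_z\rangle^2<\langle X_z^2\rangle$, recognizing the right side of the Riccati equation as $\langle X_z\rangle^2+z\langle X_z\rangle-1<0$; your route through the Mills ratio $r(z)$ and the representation $m(z)=\int_0^\infty\exp[-\int_z^{z+s}r]\,ds$ gives the equivalent inequality $m(z)<1/r(z)$ more explicitly. Where you diverge substantially is the lower bound in (\ref{A8}): the paper rewrites the Riccati ODE as a linear equation with source $m^2-1$, integrates to get $m(z)=e^{z^2/2}\int_z^\infty[1-m(z')^2]e^{-z'^2/2}dz'$ (their (\ref{J8})), feeds in the already-known $m(z')<1/z'$, and invokes the classical two-sided Mills bound (\ref{I8}) to land immediately on $m(z)\ge(1-1/z^2)(1/z-1/z^3)>1/z-2/z^3$; your Riccati subsolution-and-comparison-from-infinity argument reaches the same conclusion without needing the sharp lower Mills bound, at the cost of a slightly longer bookkeeping. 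Finally, for (\ref{B8}) your proof is genuinely cleaner than the paper's: the paper proves $m(z)>|z|$ by a non-crossing-of-trajectories argument for the Riccati flow and proves the upper bound by analyzing an auxiliary linear ODE for $w(z)=\alpha-z-m(z)$, whereas you get both bounds in one line from $m(z)=r(z)+|z|$ together with $r$ increasing, $r>0$, $r(0)=\sqrt{2/\pi}$, and $m$ decreasing. Both approaches are valid; yours is more systematic (everything flows from properties of $r$), while the paper's is somewhat shorter on the lower bound in (\ref{A8}) because it is willing to quote (\ref{I8}).
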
 
\begin{proof}
We have that
\begin{multline} \label{C8}
m(z) \ = \ \int_z^\infty (z'-z)e^{-z'^2/2} \ dz' \Big/ \int_z^\infty e^{-z'^2/2} \ dz'  \\
= \ e^{-z^2/2} \Big/ \int_z^\infty e^{-z'^2/2} \ dz' -z \ = \ \left[\int_0^\infty e^{-z'^2/2-zz'} \ dz'\right]^{-1}-z \ .
\end{multline} 
Differentiating the last formula on the RHS of (\ref{C8}) we see that
\be \label{D8}
\frac{dm(z)}{dz} \ = \ \int_0^\infty z'e^{-z'^2/2-zz'} \ dz'\Big/ \left[\int_0^\infty e^{-z'^2/2-zz'} \ dz'\right]^2-1 \ .
\ee
Using the fact that
\be \label{E8}
m(z) \ = \ \int_0^\infty z'e^{-z'^2/2-zz'} \ dz'\Big/ \int_0^\infty e^{-z'^2/2-zz'} \ dz' \ ,
\ee
we conclude that $m(\cdot)$ is the solution to the Riccati equation
\be \label{F8}
\frac{dm(z)}{dz} \ = \  m(z)^2+zm(z)-1 \ , \quad  \ .
\ee
which satisfies $m(0)=\sqrt{2/\pi}$.  Observe that $m(z)=-z$ is a solution to (\ref{F8}). 

We show that the function (\ref{C8}) is decreasing.  
To see this we use the identity
\be\label{G8}
 \int_0^\infty z'(z'+z)e^{-z'^2/2-zz'} \ dz' \ = \  \int_0^\infty z'\left(-\frac{d}{dz'}\right) e^{-z'^2/2-zz'} \ dz'       \\
 = \  \int_0^\infty e^{-z'^2/2-zz'} \ dz' \ .
\ee
Evidently  (\ref{G8}) implies that
\be \label{H8}
\langle X_z^2\rangle+z\langle X_z\rangle \ = \ 1 \ .
\ee
Since $\langle X_z\rangle^2< \langle X_z^2\rangle$, we see from (\ref{H8}) that the RHS of (\ref{F8}) is strictly negative, whence $m(\cdot)$ decreases. Note  also that (\ref{H8}) implies the upper bound in (\ref{A8}). 
To obtain the lower bound we recall the well known inequality 
\be \label{I8}
\left[\frac{1}{z}-\frac{1}{z^3}\right] \  < \ e^{z^2/2}\int_z^\infty e^{-z'^2/2} \ dz' <  \frac{1}{z} \  \quad {\rm for \ } z>0. 
\ee
Considering (\ref{F8}) to be a linear equation with inhomogeneous term $m(z)^2-1$, the solution $m(z)$ has the representation
\be \label{J8}
m(z) \ = \ e^{z^2/2}\int_z^\infty [1-m(z')^2]e^{-z'^2/2} \ dz' \ .
\ee
It follows from the upper bound in (\ref{A8}) and (\ref{J8}) that
\be \label{K8}
m(z) \ \ge \  \left(1-\frac{1}{z^2}\right) e^{z^2/2}\int_z^\infty e^{-z'^2/2} \ dz' \ .
\ee
We then obtain the lower bound in (\ref{A8}) from the lower bound in (\ref{I8}) and  (\ref{K8}). 

To obtain the lower bound in (\ref{B8}) we use the fact that  trajectories of the non-autonomous differential equation (\ref{F8}) do not intersect  in $\mathbb{R}^2$, in particular the trajectories $z\ra [m(z),z]$ and  $z\ra [-z,z]$.  To obtain the upper bound we observe that the function $w(z)=\al-z-m(z)$ is a solution to the initial value problem
\be \label{L8}
\frac{dw(z)}{dz} \ = \ a(z)w(z)-b(z) \ ,  \quad w(0)=\al-\sqrt{2/\pi} \ ,
\ee
where the functions $a(\cdot), \ b(\cdot)$ are given by
\be \label{M8}
a(z) \ =  \ \al+m(z) \ , \quad b(z) \ = \ \al[\al-z] \  .
\ee
Evidently $w(z)\ge 0$ for $z<0$ if $w(0)\ge 0$ and $b(z)\ge 0$ for $z<0$. This is the case if $\al=\sqrt{2/\pi}$. 
\end{proof}
\begin{lem}
For any  $\del$ satisfying $0<\del<1$ there exists $c(\del)>0$ depending on $\del$ such that for all $z\in\mathbb{R}$, 
\be \label{N8}
P(m(z)<X_z<(1+\del)m(z)) \ \ge \ c(\del) \ ,  \quad P((1-\del) m(z)<X_z<m(z)) \ \ge \ c(\del) \  .
\ee
Furthermore there exist constants $C,c>0$ such that for all $z\in\mathbb{R}$, 
\be \label{O8}
P(X_z>km(z)) \ \le \ \ Ce^{-ck} \ , \quad k=1,2,...
\ee
The variables $X_z/m(z)$ converge in distribution as $z\ra\infty$ to the exponential variable $\mathcal{X}$ with mean $1$.
\end{lem}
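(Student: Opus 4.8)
The plan is to work throughout with the explicit density of $X_z$. Since $Z$ has density $(2\pi)^{-1/2}e^{-v^2/2}$, the conditioned variable $X_z=Z-z\mid Z>z$ has density $f_z(x)=(m(z)+z)\,e^{-x^2/2-xz}$ on $(0,\infty)$, the normalisation being precisely the identity $\int_0^\infty e^{-x'^2/2-x'z}\,dx'=(m(z)+z)^{-1}$ appearing in (\ref{C8}). Consequently $X_z/m(z)$ has density
\be
g_z(y) \ = \ m(z)\bigl(m(z)+z\bigr)\exp\left[-\tfrac12 m(z)^2y^2-z\,m(z)\,y\right] \ , \quad y>0 \ .
\ee
I shall also use constantly the elementary identity $P(X_z>t)=P(Z>z+t)/P(Z>z)$ for $t>0$ (valid since $t>0$ forces $\{Z>z+t\}\subseteq\{Z>z\}$), together with the asymptotics from Lemma~A.1: $m(z)\to0$ and $z\,m(z)\to1$ as $z\to+\infty$ (from (\ref{A8})); $|z|<m(z)<|z|+\sqrt{2/\pi}$ for $z<0$ (from (\ref{B8})); and $m(z)+z\to0$ as $z\to-\infty$ (from (\ref{C8})). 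Writing $r(z):=m(z)+z$, one thus has $m(z)=-z+r(z)$ with $r(z)\to0$ as $z\to-\infty$.

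The convergence in distribution is then immediate: as $z\to\infty$, (\ref{A8}) gives $m(z)\to0$, $m(z)^2\to0$ and $m(z)(m(z)+z)=m(z)^2+z\,m(z)\to1$, so $g_z(y)\to e^{-y}$ for every $y>0$. Since each $g_z$, and the limit $e^{-y}\mathbf{1}_{\{y>0\}}$, is a probability density, Scheff\'e's lemma gives $\int_0^\infty|g_z(y)-e^{-y}|\,dy\to0$, hence convergence in total variation, a fortiori in distribution, to $\mathcal X$.

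For the tail bound (\ref{O8}) I would split on the sign of $z$. For $z\ge0$, the Gaussian inequality $P(Z>x+y)\le e^{-y^2/2-xy}P(Z>x)$, valid for $x,y\ge0$ by the substitution $v\mapsto v+y$, applied with $x=z$ and $y=km(z)$ gives $P(X_z>km(z))\le e^{-k^2m(z)^2/2-kz\,m(z)}\le e^{-kc_0}$, where $c_0=\inf_{z\ge0}\max\{\tfrac12 m(z)^2,\ z\,m(z)\}$; this infimum is strictly positive because $m(z)\to0$ forces $z\to\infty$ (by continuity and monotonicity of $m$, with $m(0)=\sqrt{2/\pi}>0$), whereupon $z\,m(z)\to1$. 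For $z<0$, one has $P(Z>z)>\tfrac12$, while by (\ref{B8}) $km(z)+z\ge|z|(k-1)\ge0$ and also $km(z)+z\ge k\sqrt{2/\pi}+z$; using whichever of these lower bounds is positive and grows linearly in $k$ (the first when $|z|\ge1$, the second when $|z|<1$, both once $k\ge2$) to bound $P(Z>km(z)+z)$ by a Gaussian tail, and estimating crudely by $1$ when $k=1$, yields a bound of the form $Ce^{-ck}$; the decay is in fact super-exponential there since $X_z/m(z)$ concentrates at $1$.

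For (\ref{N8}) I would use the exact formula $P\bigl(am(z)<X_z<bm(z)\bigr)=P\bigl(z+am(z)<Z<z+bm(z)\bigr)/P(Z>z)$ with $(a,b)=(1,1+\delta)$ and $(a,b)=(1-\delta,1)$. As $z\to+\infty$, the substitution $v=z+s/z$, together with $z\,m(z)\to1$ and the two-sided tail estimate (\ref{I8}), shows this ratio converges to $e^{-a}-e^{-b}>0$. As $z\to-\infty$, using $m(z)=-z+r(z)$ with $r(z)\to0$, one checks that $z+m(z)=r(z)\to0$, $z+(1+\delta)m(z)=\delta|z|+(1+\delta)r(z)\to+\infty$ and $z+(1-\delta)m(z)=-\delta|z|+(1-\delta)r(z)\to-\infty$, so both ratios converge to $P(Z>0)=\tfrac12>0$. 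On any compact $z$-interval the map $z\mapsto P(am(z)<X_z<bm(z))$ is continuous and strictly positive (its numerator is positive since $am(z)<bm(z)$), hence bounded below; patching the three regimes produces the uniform constant $c(\delta)>0$. I expect the main obstacle to be organising exactly this uniformity in $z$, above all in the regime $z\to-\infty$: there the limiting law of $X_z/m(z)$ is the point mass at $1$, sitting precisely at the common endpoint of the two intervals in (\ref{N8}), so one must quantify that $X_z/m(z)-1$ is to leading order the centred Gaussian $Z/|z|$ in order to see that the mass on each side of $1$ stays bounded below; with that understood, all the remaining estimates are routine Gaussian tail computations.
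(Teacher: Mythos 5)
Your proposal is correct and, in substance, follows the same route as the paper: work with the explicit density $\rho_z(z')=A(z)e^{-(z'+z)^2/2}=B(z)e^{-z'z-z'^2/2}$, exploit the asymptotics of $m(z)$ from Lemma~A.1, and treat the regimes $z\to+\infty$ (approximately exponential) and $z$ bounded or $z\to-\infty$ (approximately Gaussian) separately. The paper's proof is given only in sketch form; yours supplies the missing details explicitly. The one genuinely different ingredient is your use of Scheff\'e's lemma to upgrade pointwise convergence of the rescaled densities $g_z(y)\to e^{-y}$ into $L^1$ convergence, which both gives convergence in distribution and, via the representation $P(am(z)<X_z<bm(z))=\int_a^b g_z(y)\,dy$, handles the $z\to+\infty$ regime of (\ref{N8}) uniformly; the paper instead gestures at the two representations of $\rho_z$ without isolating this step. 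Your observation that the $z\to-\infty$ regime is the delicate one (since $X_z/m(z)$ degenerates to the point mass at $1$, the common endpoint of the two intervals in (\ref{N8})) and your identification of $r(z)=m(z)+z\to0$ as the quantity governing where the Gaussian mass sits relative to $1$ is exactly the right way to see why the two one-sided probabilities stay bounded below. All the Gaussian tail estimates you invoke are standard and used correctly; no gap.
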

\begin{proof}
Let $\rho_z(z'), \ z'>0,$  be the pdf of the variable $X_z$.  Then
\be \label{P8}
\rho_z(z') \ = \  A(z)\exp\left[-\frac{(z'+z)^2}{2}\right] \ = \ B(z) \exp\left[-z'z-\frac{z'^2}{2}\right] \ , \quad z'>0 \ ,
\ee
where $A(z),B(z)$ depend only on $z$.  For $z<1$ the variable $X_z$ is approximately Gaussian with mean $m(z)$. Hence using the first representation on the RHS of (\ref{P8})  and Lemma A.1. we conclude that (\ref{N8}), (\ref{O8}) hold. For $z\ge 1$ the variable $X_z$ is approximately exponential with mean $m(z)$. Using the second representation on the RHS of (\ref{P8}) we see that (\ref{N8}), (\ref{O8}) hold.  We similarly see that $X_z/m(z)$ converges in distribution to the exponential variable as $z\ra\infty$.  
\end{proof}
\begin{lem}
     Let $f:[0,\infty)\ra\R$ be a continuous non-negative increasing function. Then there is a universal constant $c>0$ such that
\be \label{Q8}
\frac{E[X_zf(X_z))]}{E[f(X_z)]} \ \ge c m(z) \quad {\rm for \ } z\in\mathbb{R} \ .
\ee
If in addition $\lim_{z\ra\infty}f(z)=1$ and $-\log[1-f(\cdot)]$ is a concave function, then there is a universal constant $C$ such that
\be \label{R8}
\frac{E[X_zf(X_z)]}{E[f(X_z)]} \ \le C m(z) \quad {\rm for \ } z\in \mathbb{R} \ .
\ee
\end{lem}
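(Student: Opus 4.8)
The plan is to prove the two inequalities by quite different arguments: \eqref{Q8} follows from a correlation (association) inequality, while \eqref{R8} requires the concavity hypothesis together with a dichotomy based on the size of the function $q$ defined by $f=1-e^{-q}$.

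For the lower bound \eqref{Q8}, I would use that both $x\mapsto x$ and $x\mapsto f(x)$ are non-decreasing. Letting $X_z'$ be an independent copy of $X_z$, the random variable $(X_z-X_z')(f(X_z)-f(X_z'))$ is non-negative almost surely, so taking expectations (which we may assume finite, the inequality being otherwise trivial) yields $E[X_zf(X_z)]-m(z)E[f(X_z)]\ge 0$. Hence \eqref{Q8} holds with the universal constant $c=1$.

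For the upper bound \eqref{R8}, write $f=1-e^{-q}$ with $q:[0,\infty)\to[0,\infty)$ non-negative, non-decreasing and concave; since $\lim_{x\to\infty}f(x)=1$ we have $q(x)\to\infty$, and concavity together with $q(0)\ge 0$ forces $q(x)>0$, hence $f(x)>0$, for every $x>0$ (otherwise $q$ would vanish on a whole half-line). Set $\mu=m(z)$. I would first record two bounds that are uniform in $z$ and come from Lemmas A.1--A.2: first, $P(X_z\ge\mu)\ge c_1$ for a universal $c_1>0$ (take $c_1=c(1/2)$ in \eqref{N8}, since $\{m(z)<X_z<(3/2)m(z)\}\subset\{X_z\ge\mu\}$); second, $E[X_z^2]\le C'\mu^2$ for a universal $C'$ (either integrate the tail bound \eqref{O8} after substituting $t=\mu s$, or use $E[X_z^2]=1-zm(z)$ from \eqref{H8} with \eqref{A8}, \eqref{B8}). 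Now split on the value of $q(\mu)$. If $q(\mu)\ge 1$ then $f(\mu)\ge 1-e^{-1}$, so $E[f(X_z)]\ge f(\mu)P(X_z\ge\mu)\ge(1-e^{-1})c_1$, while $E[X_zf(X_z)]\le E[X_z]=\mu$ because $f\le 1$; this already gives the ratio $\le\mu/[(1-e^{-1})c_1]$. If $q(\mu)<1$, I would estimate
\be
E[X_zf(X_z)] \ = \ E[X_zf(X_z);X_z\le\mu]+E[X_zf(X_z);X_z>\mu] \ \le \ f(\mu)\mu+\frac{q(\mu)}{\mu}E[X_z^2] \ \le \ \big(1+C'e\big)f(\mu)\mu ,
\ee
where the first term uses $f(x')\le f(\mu)$ on $(0,\mu]$ and $E[X_z;X_z\le\mu]\le\mu$; the second uses $f(x')\le q(x')\le q(\mu)x'/\mu$ on $(\mu,\infty)$ (concavity of $q$ with $q(0)\ge 0$), then $E[X_z^2]\le C'\mu^2$, and finally $q(\mu)<ef(\mu)$ (from $1-e^{-t}\ge te^{-t}>t/e$ for $0<t<1$). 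On the other hand $E[f(X_z)]\ge f(\mu)P(X_z\ge\mu)\ge c_1 f(\mu)$, so the ratio is $\le(1+C'e)\mu/c_1$. Taking $C$ to be the larger of the two constants proves \eqref{R8}.

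The routine points are the two uniform facts about $X_z$, which are immediate from the already-proved Lemmas A.1 and A.2. The one genuinely essential use of the hypotheses is the tail estimate in the case $q(\mu)<1$: without concavity the increasing function $f$ could concentrate all of its growth far out in the tail of $X_z$, and it is precisely the linear-in-$x'$ bound $q(x')\le q(\mu)x'/\mu$ that lets the contribution of $\{X_z>\mu\}$ be absorbed into the universally controlled second moment $E[X_z^2]\le C'm(z)^2$.
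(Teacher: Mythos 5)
Your proof is correct, and both halves take a genuinely different route from the paper's. For \eqref{Q8} the paper splits the denominator at $m(z)$ and uses the comparison $\int_0^{m(z)}\rho_z\le C_1\int_{m(z)}^\infty\rho_z$ from Lemma~A.2, obtaining $c=1/(C_1+1)$; you instead invoke the Chebyshev correlation inequality (two nondecreasing functions of a real random variable are positively correlated), which is more elementary, needs no Gaussian input at all, and gives the sharp universal constant $c=1$. For \eqref{R8} the paper, in the case $q(m(z))\le 1$, replaces $q$ by the secant line $\tilde q$ through $(m(z)/2,q(m(z)/2))$ and $(m(z),q(m(z)))$ and then bounds the numerator by a sum over dyadic blocks $[n m(z),(n+1)m(z)]$ using the geometric tail estimate \eqref{O8}; you instead use the one linear bound $q(x')\le q(\mu)x'/\mu$ on $(\mu,\infty)$ coming directly from concavity and $q(0)\ge 0$, together with the uniform second-moment bound $E[X_z^2]\le C'm(z)^2$ (which you correctly derive from \eqref{H8} with \eqref{A8}, \eqref{B8}), so the whole tail contribution collapses into a single moment estimate. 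Your case split on $q(\mu)\ge 1$ versus $q(\mu)<1$ matches the paper's split on $f(m(z))\ge 1-e^{-1}$, and the inequality $q(\mu)<e\,f(\mu)$ for $q(\mu)<1$ is the right bridge between the two. What your approach buys is a shorter proof with fewer moving parts (no secant construction, no infinite sum); what the paper's buys is a template that would survive if one only had the tail bound \eqref{O8} and not the exact identity \eqref{H8} for the second moment, though you already noted one can recover $E[X_z^2]\lesssim m(z)^2$ from \eqref{O8} alone.
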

\begin{proof}
The ratio of expectations in (\ref{Q8}) is given by
\be \label{S8}
\int_0^\infty z'f(z') \rho_z(z') \ dz' \Big/\int_0^\infty f(z') \rho_z(z') \ dz' \  ,
\ee
where $\rho_z(\cdot)$ is the pdf of $X_z$. 
From Lemma A.2 we see there is a constant $C_1>0$ such that
\be \label{T8}
\int_0^{m(z)} \rho_z(z') \ dz' \ \le \ C_1\int^\infty_{m(z)} \rho_z(z') \ dz' \ , \quad z\in\R \ .
\ee
It  follows from(\ref{T8}) that the LHS of (\ref{S8}) is bounded below by $m(z)/(C_1+1)$, whence (\ref{Q8}) follows. 

To obtain the upper bound (\ref{R8}) we first observe that the expression on the LHS of  (\ref{R8}) is bounded above by
\be \label{U8}
m(z)+\int_{m(z)}^\infty z'f(z') \rho_z(z') \ dz' \Big/\int_0^\infty f(z') \rho_z(z') \ dz' \ .
\ee
Suppose now that $f(z')\ge1-e^{-1}$ for $z'\ge m(z)$.  Then since $\sup f(\cdot)=1$, it follows from (\ref{N8}) that the expression  (\ref{U8}) is bounded above by $C_2m(z)$ for some constant $C_2$. Hence to complete the proof of (\ref{R8}) we may assume that $f(m(z))\le 1-e^{-1}$.  In that case $f(z')=1-e^{-q(z')}$, where $q(\cdot)$ is a non-negative increasing concave function and $q(m(z))\le1$. From the concavity of $q(\cdot)$ we  have that $q(z')\le \tilde{q}(z')$ for $z'\ge m(z)$, and $q(z')\ge \tilde{q}(z')$ for $m(z)/2<z'<m(z)$, where $\tilde{q}(\cdot)$ is the secant line function
\be \label{V8}
\tilde{q}(z') \ = \ \frac{2}{m(z)}\left[(m(z)-z')q(m(z)/2)+(z'-m(z)/2)q(m(z))\right]\ .
\ee
Hence the second term in (\ref{U8}) is bounded above by
\begin{multline} \label{W8}
\int_{m(z)}^\infty z'[1-e^{-\tilde{q}(z')}] \rho_z(z') \ dz' \Big/\int_{m(z)/2}^{m(z)}  [1-e^{-\tilde{q}(z')}]  \rho_z(z') \ dz' \\
\le \ e\int_{m(z)}^\infty z'\tilde{q}(z') \rho_z(z') \ dz' \Big/\int_{m(z)/2}^{m(z)} \tilde{q}(z')  \rho_z(z') \ dz' \ .
\end{multline}
We see from (\ref{V8}) that
\begin{multline} \label{X8}
\tilde{q}(z') \ \ge \ q(m(z))/2 \quad {\rm for \ } 3m(z)/4\le z'\le m(z) \ , \\
\tilde{q}(z') \ \le \ (2n+1)q(m(z)) \quad {\rm for \ }  z'\le (n+1) m(z) \ , \ n=1,2,...
\end{multline}
It follows from (\ref{W8}), (\ref{X8}) that the second term in (\ref{U8}) is bounded above by
\be \label{Y8}
2e\sum_{n=1}^\infty (2n+1)\int_{nm(z)}^{(n+1)m(z)} z' \rho_z(z') \ dz' \Big/\int_{3m(z)/4}^{m(z)}  \rho_z(z') \ dz' \ .
\ee
We conclude from (\ref{N8}), (\ref{O8})  that the expression (\ref{Y8}) is bounded above by $Cm(z), \ z\in\R,$ for some constant $C$.
\end{proof}

\end{document}